\documentclass{amsart}
\usepackage[normalem]{ulem}
\usepackage{amssymb,epsfig,mathrsfs,mathpazo,scalerel,url,amsthm,thmtools,bbm}
\usepackage{xcolor}
\usepackage{stmaryrd}
\usepackage{epsfig,bm}
\usepackage[multiple]{footmisc}
\usepackage{accents} 
\usepackage{mathtools} 
\newcommand{\osc}{{\rm osc}}

\usepackage{arydshln, booktabs, makecell, pbox, tabularx}

\usepackage{graphicx, caption, subcaption}

\newtheorem{theorem}{Theorem}[section]
\newtheorem{lemma}[theorem]{Lemma}
\newtheorem{proposition}[theorem]{Proposition}

\declaretheorem[style=remark,qed=$\vartriangle$,sibling=theorem]{remark}
\numberwithin{equation}{section}

\newcommand{\eps}{\varepsilon}

\newcommand{\R}{\mathbb R}
\newcommand{\C}{\mathbb C}

\newcommand{\N}{\mathbb N}

\newcommand{\cF}{\mathcal F}

\newcommand{\cL}{\mathcal L}
\newcommand{\cS}{\mathcal S}
\newcommand{\Lis}{\cL\mathrm{is}}

\newcommand{\identity}{\mathrm{Id}}

\DeclareMathOperator{\ran}{ran}
\DeclareMathOperator{\supp}{supp}

\DeclareMathOperator{\diam}{diam}
\DeclareMathOperator*{\argmin}{argmin}

\DeclareMathOperator{\divv}{div}
\DeclareMathOperator{\divvv}{\new{div}}

\newcommand{\new}[1]{{\color{black}{#1}}}

\newcommand{\be}{\begin{equation}}
\newcommand{\ee}{\end{equation}}

\newcommand{\tria}{{\mathcal T}}
\newcommand{\RT}{\mathit{RT}}

%
{\par\begin{samepage}%
\begin{tabbing}\ttfamily%
 \hspace*{5mm}\=\hspace{3ex}\=\hspace{3ex}\=\hspace{3ex}\=\hspace{3ex}%
\=\hspace{3ex}\=\hspace{3ex}\=\hspace{3ex}\=\hspace{3ex}\kill}%
{\end{tabbing}\end{samepage}}

\makeatletter
\@namedef{subjclassname@2020}{%
  \textup{2020} Mathematics Subject Classification}
\makeatother


\title[A pollution-free ultra-weak FOSLS discretization of the Helmholtz equation]{A pollution-free ultra-weak FOSLS discretization of the Helmholtz equation\\
{\textnormal{\lowercase{\small \uppercase{D}edicated to \uppercase{P}rofessor \uppercase{L}eszek \uppercase{F}.~\uppercase{D}emkowicz on the occasion of his 70$^\mathrm{th}$ birthday}}}}

\date{\today}

\author{Harald Monsuur, Rob Stevenson}

\address{
Korteweg--de Vries (KdV) Institute for Mathematics, University of Amsterdam, P.O. Box 94248, 1090 GE Amsterdam, The Netherlands.
}
\email{h.monsuur@uva.nl, rob.p.stevenson@gmail.com}

\thanks{This research has been supported by the Netherlands Organization for Scientific Research (NWO) under contract.~no.~SH-208-11, and by the
NSF Grant DMS ID 1720297}

\subjclass[2020]{
35F15, 
35J05, 
65N12, 
65N30,  
65N50.
}
\keywords{Helmholtz equation, ultra-weak FOSLS, optimal test-norm, pollution-free approximation}

\begin{document}

\begin{abstract} 
We consider an ultra-weak first order system discretization  of the Helmholtz equation.
When employing the optimal test norm, the `ideal' method yields the best approximation to the pair of the Helmholtz solution and its scaled gradient 
w.r.t.~the norm on $L_2(\Omega)\times L_2(\Omega)^d$ from the selected finite element trial space.
On convex polygons, the `practical', implementable method is shown to be pollution-free essentially whenever the order \new{$\tilde{p}$} of the finite element test space grows proportionally with \new{$\max(\log \kappa,p^2)$}, \new{with $p$ being the order at trial side.}
Numerical results also on other domains show a much better accuracy than for the Galerkin method.
\end{abstract}

\maketitle
\section{Introduction}
\subsection{Numerical approximation of the Helmholtz problem}
Standard Galerkin discretizations of the Helmholtz equation suffer from so-called pollution. For large wavenumbers $\kappa$, the Galerkin solution is not a quasi-best approximation from the selected finite element trial space. It is known that pollution can be avoided by choosing the polynomial degree $p$ sufficiently large dependent on $\kappa$. In the seminal work \cite{202.8} it has been shown that for a two- or three-dimensional bounded domain with analytic boundary, pure Robin boundary conditions, and a quasi-uniform mesh with mesh-size $h$, the Galerkin solution is a quasi-best approximation from the trial space w.r.t.~the norm $\|\cdot\|_{1,\kappa}:=\sqrt{\frac{1}{\kappa^2}|\cdot|_{H^1(\Omega)}^2+\|\cdot\|_{L_2(\Omega)}^2}$
when $\frac{h \kappa}{p}$ and $\frac{\log \kappa}{p}$ are sufficiently small. The same result holds true on (two-dimensional) polygons when the mesh is properly additionally refined towards the vertices \cite{202.8,71.5}. Notice that for a typical solution of the Helmholtz problem, $\frac{h \kappa}{p}$ being sufficiently small is already desirable from an approximation point of view.

Besides plain finite element Galerkin, numerous other numerical solution methods for the Helmholtz equation have been proposed.
Several of those are based on approximation properties of problem adapted systems of functions (e.g. \cite{138.296,243.53}).
Methods that are based on the approximation by piecewise polynomials, as we will consider, include (First Order) Least-Squares methods (\cite{182.35,38.72,20.186}, and Discontinuous Galerkin methods.
A subclass of the latter methods is generated by the Ultra-Weak Variational Formulation (UWVF) (\cite{35.9373,35.8656}), which despite its name is quite different from the method that is studied in this work.

\subsection{Ultra-weak first order system formulation, and optimal test norm}
We \new{are going to} write the Helmholtz equation as an ultra-weak first order variational system $\langle \mathbbm{u}, B_\kappa' \mathbbm{v}\rangle_U=q(\mathbbm{v})$ ($ \mathbbm{v} \in V_\mp$). Here \new{$U:=L_2(\Omega)\times L_2(\Omega)^d$, $V_\mp$ is a closed subspace of $H^1(\Omega)\times H(\divv;\Omega)$ defined by the incorporation of \new{(adjoint)} homogeneous boundary conditions (the symbol $\pm$ refers to either of the possible signs in the Robin boundary condition, and $\mp=-\pm$)}, 
$\mathbbm{u}=(\phi,\frac{1}{\kappa} \nabla \phi)$, where $\phi$ is the Helmholtz solution, $B_\kappa'$ is a partial differential operator of first order, \new{and $q \in V_\mp'$ is a functional defined in terms of the data of the Helmholtz problem.}
We show that for any $\kappa>0$ this formulation is well-posed in the sense that $B_\kappa$, i.e., the adjoint of $B_\kappa'$, is  a boundedly invertible operator
 from $U$ to the dual space $V_\mp'$.
 
 When both $U$ and $V_\mp$ are equipped with their canonical norms, the condition number of $B_\kappa$ increases with increasing $\kappa$. By replacing the canonical norm on $V_\mp$ by the so-called optimal test norm $\|\cdot\|_{V_\mp,\kappa}:=\|B'_\kappa\cdot\|_U$, and by equipping $V_\mp'$ with the resulting dual norm, $B_\kappa$ becomes an isometry. 
Consequently, given a finite dimensional subspace $U^\delta \subset U$, the `ideal' least-squares approximation $\mathbbm{u}^\delta:=\argmin_{\mathbbm{w}^\delta \in U^\delta}\|q-B_\kappa \mathbbm{w}^\delta\|_{V_\mp'}$ is the \emph{best} approximation from $U^\delta$ to $\mathbbm{u}$  w.r.t.~the norm on $U$.
Notice that $\|\mathbbm{u}\|_U=\|(\phi,\frac{1}{\kappa} \nabla \phi)\|_{L_2(\Omega)\times L_2(\Omega)^d}=\|\phi\|_{1,\kappa}$.

The use of the optimal test norm has been advocated in \new{\cite{64.155,45.44}}, see also \cite{35.8565}.
Presented in a somewhat different way, it can also be found in \cite{19.83}.

\subsection{`Practical' method}
The residual minimizer w.r.t.~the norm $\sup_{0 \neq \mathbbm{v}\in V_\mp} \frac{|\cdot(\mathbbm{v})|}{\|B_\kappa' \mathbbm{v}\|_{U}}$ on $V_\mp'$ is not computable.
By replacing the supremum over $\mathbbm{v}\in V_\mp$ by a supremum over $\mathbbm{v}^\delta\in V_\mp^\delta$ for some (sufficiently large) finite dimensional subspace $V_\mp^\delta \subset V_\mp$
we obtain an implementable `practical' method. Its solution, that we still denote with $\mathbbm{u}^\delta$, is the second component of the pair $(\mathbbm{v}^\delta,\mathbbm{u}^\delta) \in V_\pm^\delta \times U^\delta$ that solves 
\be \label{zadel}
\left\{
\begin{array}{lcll}
\langle B_\kappa'\mathbbm{v}^\delta, B_\kappa'\undertilde{\mathbbm{v}}^\delta\rangle_U +
\langle \mathbbm{u}^{\delta}, B_\kappa'\undertilde{\mathbbm{v}}^\delta\rangle_U 
& \!\!= \!\!& q(\undertilde{\mathbbm{v}}^\delta) & (\undertilde{\mathbbm{v}}^\delta \in V_{\mp}^\delta),\\
\langle B_\kappa'\mathbbm{v}^\delta,\undertilde{\mathbbm{u}}^\delta\rangle_U
& \!\!=\!\! & 0 & (\undertilde{\mathbbm{u}}^\delta \in U^\delta).
\end{array}
\right.
\ee
We refer to $V^\delta_\mp$ and $U^\delta$ as being the test and trial space.

\begin{remark} First order ultra-weak formulations of Helmholtz equations were studied earlier in \cite{64.155,75.63} in a DPG context.
In that setting, the solution is a quadruple that besides the `field variables' $\phi$ and its gradient, contains two trace variables that have as domain the mesh skeleton.
These trace variables are measured in intrinsically stronger norms, which has the consequence that, in order to guarantee a certain convergence rate for the field variables, stronger regularity conditions are needed. The analysis in \cite{64.155,75.63} was restricted to the `ideal' method in which the residual is minimized in \new{the} non-computable dual norm.
\end{remark}

With the inf-sup constant
$\gamma^\delta_\kappa=\gamma^\delta_\kappa(U^\delta,V_\mp^\delta):= \inf_{0 \neq \undertilde{\mathbbm{u}}^\delta \in U^\delta} \sup_{0 \neq \undertilde{\mathbbm{v}}^\delta \in V_{\mp}^\delta} \frac{|
\langle \undertilde{\mathbbm{u}}^{\delta}, B_\kappa'\undertilde{\mathbbm{v}}^\delta\rangle_U|}{\|\undertilde{\mathbbm{u}}^\delta\|_U\|B_\kappa' \undertilde{\mathbbm{v}}^\delta\|_{U}}$, it will be shown that
\be \label{poll}
\sup_{\mathbbm{u} \in U\setminus U^\delta}
\frac{
\|\mathbbm{u}-\mathbbm{u}^\delta\|_U
}{
\inf_{\undertilde{\mathbbm{u}}^\delta \in U^\delta} \|\mathbbm{u}-\undertilde{\mathbbm{u}}^\delta\|_U
}= \frac{1}{\gamma^\delta_\kappa}.
\ee
In other words, the error in $\mathbbm{u}^\delta$ is at most a factor $1/\gamma^\delta_\kappa$ larger than the error in the best approximation from $U^\delta$, and for some $\mathbbm{u}$, it \emph{will} be a factor $1/\gamma^\delta_\kappa$ larger. We will therefore call $1/\gamma^\delta_\kappa$ the pollution factor of the method.
The constant $\gamma^\delta_\kappa$ can be computed as the square root of the smallest eigenvalue of a generalized eigenvalue problem in terms of $\kappa$, $U^\delta$ and $V^\delta_{\new{\mp}}$.

The least squares approximation $\mathbbm{u}^{\delta}$ can be called to be quasi-best when $\delta$ is from a collection $\Delta$ for which 
 $\overline{\cup_{\delta \in \Delta} U^\delta}=U$, and $\inf_{\delta \in \Delta,\,\kappa>0} \gamma^\delta_\kappa(U^\delta,V_\mp^\delta(\kappa))>0$. Notice that here we allow $V_\mp^\delta$ to (mildly) depend on $\kappa$.
 To demonstrate this uniform `inf-sup stability', we give an alternative expression for $\gamma^\delta_\kappa$ in terms of approximability from $V_\mp^\delta$ of the solution of an adjoint first order Helmholtz problem with a forcing term from $U^\delta$.

 Using this expression, for pure Robin boundary conditions, and $U^\delta$ being the space of (dis)continuous piecewise polynomials of degree $p$ w.r.t.~a quasi-uniform triangulation with mesh-size $h_\delta$ of a convex polygon $\Omega \subset \R^2$, we show uniform inf-sup stability for $V_\pm^\delta$ being the product of the continuous piecewise polynomials of degree $\tilde{p}$ and the Raviart-Thomas finite elements of order $\tilde{p}$ w.r.t.~a quasi-uniform triangulation with mesh-size $\tilde{h}_\delta$, with some additional refinements near the vertices, under the condition that \new{$\max(\log \kappa,p^2)/\tilde{p}$} is sufficiently small, and $\tilde{h}_\delta \lesssim h_\delta^{1+\eps}$ for arbitrary $\eps>0$. Our analysis builds on results from \cite{202.8} for the standard Galerkin discretization.
 
\new{The (non-exhaustive) numerical tests that we have performed so far show already good results for $U^\delta$ and $V^\delta_{\new{\mp}}$ finite element spaces w.r.t.~the same triangulation with orders $p$ and $\tilde{p}=p+2$ respectively.}

 \subsection{A posteriori error estimator}
 We will show that $\mathbbm{u}^\delta+ B_\kappa'\mathbbm{v}^\delta$ is a better approximation to $\mathbbm{u}$ than $\mathbbm{u}^\delta$, and that $\|B_\kappa'\mathbbm{v}^\delta\|_U \leq \|\mathbbm{u}-\mathbbm{u}^\delta\|_U$.
 In experiments, we observe that what we will call the `boosted approximation' $\mathbbm{u}^\delta+ B_\kappa'\mathbbm{v}^\delta$, converges to $\mathbbm{u}$ with a better rate than $\mathbbm{u}^\delta$ does, so that the a posteriori estimator $\|B_\kappa'\mathbbm{v}^\delta\|_U$ for $\|\mathbbm{u}-\mathbbm{u}^\delta\|$ is asymptotically exact. We will use local norms of $B_\kappa'\mathbbm{v}^\delta$ as error indicators to drive an adaptive solution method.
 
  \subsection{Comparison with standard Galerkin}
 To compare the results of our FOSLS discretization with that of the standard Galerkin discretization, we define the pollution factor $1/\gamma_{\kappa,{\rm Gal}}^\delta$
 for the latter analogously to \eqref{poll} but now w.r.t.~the $\|\cdot\|_{1,\kappa}$-norm, and with $U$ and $U^\delta$ replaced by $H^1(\Omega)$ and a finite element space $X^\delta \subset H^1(\Omega)$, respectively.
 This constant $\gamma_{\kappa,{\rm Gal}}^\delta$ is, however, not computable, and we approximate it by a computable quantity
 $\hat{\gamma}_{\kappa,{\rm Gal}}^\delta$ for which $\gamma_{\kappa,{\rm Gal}}^\delta \leq \hat{\gamma}_{\kappa,{\rm Gal}}^\delta \leq C \gamma_{\kappa,{\rm Gal}}^\delta$ for some constant $C>0$, which in numerical experiments turns out to be very close to $1$.
 Notice that $1/\hat{\gamma}_{\kappa,{\rm Gal}}^\delta$ is a lower bound for the pollution factor meaning that there exist solutions of the Helmholtz problem for which the $\|\cdot\|_{1,\kappa}$-error in the Galerkin approximation is at least a factor $1/\hat{\gamma}_{\kappa,{\rm Gal}}^\delta$ larger than this error in the best approximation from $X^\delta$.
 
 Numerical results for uniform triangulations of the unit square confirm that the Galerkin method is only pollution-free under the conditions of $\frac{h \kappa}{p}$ and $\frac{\log \kappa}{p}$ being sufficiently small, whereas the ultra-weak FOSLS discretization is pollution free whenever the \new{order} $\tilde{p}$ at the test side satisfies  the condition that  \new{$\max(\log \kappa,p^2)/\tilde{p}$} is sufficiently small.
 
 For $\kappa \approx 100$, mixed boundary conditions, several $p$ and $\tilde{p}=p+2$, uniform and locally refined meshes equal at test- and trial-side, and several domains including `trapping domains', tests show a much better accuracy of the ultra-weak FOSLS method in comparison to the standard Galerkin method. 
 
 On the other hand, it is fair to say that computing the ultra-weak FOSLS solution requires solving the larger saddle-point problem \eqref{zadel}.
 An iterative solution of this problem, however, only requires a good preconditioner for the Hermitian positive definite left upper block, \new{which might 
be easier to construct than such a preconditioner for the indefinite matrix that results from the standard Galerkin discretization.} This will be study of future work.
 
\subsection{Organization}
In Sect.~\ref{sec:helm} we write the Helmholtz equation as a first order system.
By applying integration-by-parts, in Sect.~\ref{sec:ultra-weak} we derive an ultra-weak formulation, and prove its well-posedness
as a mapping from $U=L_2(\Omega) \times L_2(\Omega)^d$ to the dual of a space $V_\mp$.
After equipping the latter space with the optimal test norm, we consider `ideal' and `practical' least-squares discretizations in Sect.~\ref{sec:ultra-weakFOSLS}.
The latter discretizations yield quasi-best solutions under a uniform inf-sup condition. Verification of this condition is the topic of Sect.~\ref{sec:infsupstab}.
In Sect.~\ref{sec:assessmentGalerkin} we introduce a computable pollution factor of the common Galerkin discretization with the purpose to compare this method with the FOSLS method.
Sect.~\ref{sec:boosted} deals with a boosted FOSLS method and a posteriori error estimation. Numerical results are presented in Sect.~\ref{sec:numerics}.
Finally, conclusions are formulated in Section~\ref{sec:conclusions}.

\subsection{Notations}
For normed linear spaces $E$ and $F$, by $\cL(E, F)$ we will denote the normed linear
space of bounded linear mappings $E \rightarrow F$, and by $\Lis(E, F)$ its subset of boundedly invertible linear mappings $E \rightarrow F$. We write $E \hookrightarrow F$ to denote that $E$ is
continuously embedded into $F$. Since we consider linear spaces over $\C$, for a normed linear space $E$ its dual $E'$ is the normed linear space of anti-linear functionals.

By $C \lesssim D$ we will mean that $C$ can be bounded by a multiple of $D$, \new{unless explicitly stated otherwise} \emph{independently} of parameters which $C$ and $D$ may depend on, as the wave number $\kappa$, the discretisation index $\delta$, \new{or the orders $p$ and $\tilde{p}$ at trial- and test-side.}
Obviously, $C \gtrsim D$ is defined as $D \lesssim C$, and $C\eqsim D$ as $C\lesssim D$ and $C \gtrsim D$.
\new{The aforementioned multiple may depend on the space dimension $d$, and on the shape regularity of the finite element mesh.}

\section{Helmholtz equation} \label{sec:helm}
\subsection{Second order formulation}
Let $\Omega \subset \R^d$ be a bounded Lipschitz domain with boundary $\partial\Omega$ decomposed into $\Gamma_D \,\dot{\cup}\, \Gamma_N \,\dot{\cup}\,\Gamma_R$, where $|\Gamma_R|>0$, and \new{for some arbitrary, but fixed $\kappa_0>0$, let $\kappa\geq \kappa_0$.} 
Given $f \in H^1_{0,\Gamma_D}(\Omega)'$, $g_D \in H^{\frac12}(\Gamma_D)$,
and $g \in H^{-\frac12}(\Gamma_N \cup \Gamma_R)$ (=$H_{00}^{\frac12}(\Gamma_N \cup \Gamma_R)'$)%
,\footnote{\new{For a measurable $\Gamma \subset \partial\Omega$, $H^1_{0,\Gamma}(\Omega):=\{v \in H^1(\Omega)\colon v|_{\Gamma}=0\}$.
$H^{\frac12}(\Gamma)$ is the space of restrictions of $H^1(\Omega)$-functions to $\Gamma$ with norm $\|v\|_{H^{\frac12}(\Gamma)}:=\inf\{\|w\|_{H^1(\Omega)}\colon w|_{\Gamma}=v,\,w \in H^1(\Omega)\}$. The definition of $H_{00}^{\frac12}(\Gamma)$ is similar with $H^1(\Omega)$ replaced by its subspace $H_{0,\partial\Omega \setminus \Gamma}^1(\Omega)$.}}
we consider the Helmholtz equation with (mixed) Dirichlet, Neumann and/or Robin boundary conditions of finding $\phi\in H^1(\Omega)$ that satisfies
\be \label{eq:helmholtz}
\begin{aligned}
-\Delta \phi-\kappa^2 \phi &= \kappa^2 f & &\text{on } \Omega,\\
\phi & = \kappa g_D \quad & &\text{on } \Gamma_D,\\
\tfrac{\partial \phi}{\partial \vec{n}} & = \kappa^2 g \quad & &\text{on } \Gamma_N,\\
\tfrac{\partial \phi}{\partial \vec{n}}\pm i \kappa \phi&=\kappa^2 g \quad  & &\text{on } \Gamma_R,
\end{aligned}
\ee
where `$\pm$' means either of the valid options `$+$' or `$-$'.

\begin{remark} The scalings at the right-hand side with factors $\kappa$ or $\kappa^2$ are harmless because $\kappa>0$, and are made for convenience.
\end{remark}

In variational form  \eqref{eq:helmholtz} reads as finding
$$
\phi\in H^1_{\kappa g_D,\Gamma_D}(\Omega):=\{\breve{\phi} \in H^1(\Omega)\colon \breve{\phi}|_{\Gamma_D}=\kappa g_D\},
$$
such that 
\be \label{2nd}
\begin{split}
(L_\kappa \phi)(\eta):=&\int_\Omega \tfrac{1}{\kappa^2} \nabla \phi  \cdot \nabla \overline{\eta}-\phi \overline{\eta}\,dx \pm  \tfrac{i}{\kappa}  \int_{\Gamma_R} \phi \overline{\eta}\,ds\\
&=f(\eta)+ \int_{\Gamma_N \cup \Gamma_R}  g \overline{\eta}\,ds \quad (\eta \in H_{0,\Gamma_D}^1(\Omega)).
\end{split}
\ee
It is known, see e.g.~\cite[Ch.~35]{70.98}, that
\be \label{wellposedness}
L_\kappa \in \Lis(H_{0,\Gamma_D}^1(\Omega),H_{0,\Gamma_D}^1(\Omega)'),
\ee
albeit with a condition number that tends to $\infty$ when $\kappa \uparrow \infty$.

\begin{remark}[No Robin boundary] The well-posedness \eqref{wellposedness} holds also true 
when $|\Gamma_R|=0$ and $\kappa^2$ is not an eigenvalue of the Laplace operator with (homogeneous) mixed Dirichlet/Neumann boundary conditions.
In those cases also the forthcoming Theorem~\ref{thm:main} is valid.
\end{remark}

\subsection{First order formulation}
For some $f_1 \in L_2(\Omega)$ and $\vec{f}_2 \in L_2(\Omega)^d$, we decompose $f \in H^1_{0,\Gamma_D}(\Omega)'$ as
$$
f(\eta)=\int_\Omega f_1 \overline{\eta} + \tfrac{1}{\kappa} \vec{f}_2 \cdot \nabla \overline{\eta}\,dx \quad (\eta \in H^1_{0,\Gamma_D}(\Omega)),
$$
which decomposition exists, although non-uniquely, by Riesz' representation theorem.
Then setting $\vec{u}=\tfrac{1}{\kappa} \nabla \phi - \vec{f}_2$ in \eqref{2nd}, we arrive at the \emph{first order system}
\be \label{1storder}
\begin{aligned}
-\tfrac{1}{\kappa} \divvv \vec{u}- \phi&= f_1 & &\text{on } \Omega,\\
\tfrac{1}{\kappa} \nabla \phi- \vec{u}&=\vec{f}_2 & &\text{on } \Omega,\\
\phi & = \kappa g_D \quad  & &\text{on } \Gamma_D,\\
\vec{u} \cdot \vec{n} & = \kappa g & &\text{on } \Gamma_N,\\
\vec{u} \cdot \vec{n} \pm i \phi&=\kappa g \quad & &\text{on } \Gamma_R.
\end{aligned}
\ee

\section{Ultra-weak first order formulation} \label{sec:ultra-weak}
\subsection{Definition and well-posedness}
Assuming sufficiently smooth data, by testing the first equation in \eqref{1storder} with $\eta$ and the second one with $\vec{v}$, for smooth $\eta$ and $\vec{v}$ with 
$\eta=0$ on $\Gamma_D$, $\vec{v}\cdot \vec{n}=0$ on $\Gamma_N$, and
$\vec{v}\cdot\vec{n} \mp i \eta =0$ on $\Gamma_R$, and by applying integration-by-parts and substituting the boundary conditions from \eqref{1storder} we arrive at the ultra-weak variational formulation
\be \label{eq:ultra-weak}
\begin{split}
\big(B_\kappa&(\phi,\vec{u})\big)(\eta,\vec{v}):=
\int_\Omega  \tfrac{1}{\kappa} \vec{u}\cdot\nabla \overline{\eta} -  \phi \overline{\eta}- \tfrac{1}{\kappa} \phi \divvv \overline{\vec{v}}- \vec{u}\cdot \overline{\vec{v}}\,dx\\
&=\int_\Omega f_1 \overline{\eta}+\vec{f}_2 \cdot \overline{\vec{v}}\,dx
-\int_{\Gamma_D} g_D \overline{\vec{v}}\cdot\vec{n}  \,ds+ \int_{\Gamma_N \cup \Gamma_R} g \overline{\eta} \,ds=:q(\eta,\vec{v}).
\end{split}
\ee

We set
\begin{align*}
&U:=L_2(\Omega) \times L_2(\Omega)^d,
\intertext{and}
&V_{\mp}:=\Big\{(\eta,\vec{v}) \in H_{0,\Gamma_D}^1(\Omega) \times H(\divv;\Omega)\colon \\
&\hspace*{9em}\int_{\partial\Omega} \vec{v}\cdot\vec{n} \,\overline{\psi} \,ds \mp i \int_{\Gamma_R} \eta \overline{\psi} \,ds=0 \quad (\psi\in H_{0,\Gamma_D}^1(\Omega))\Big\},
\end{align*}%
both being Hilbertian spaces equipped with their canonical norms $\|\cdot\|_{L_2(\Omega) \times L_2(\Omega)^d}$ and $\|\cdot\|_{H^1(\Omega) \times H(\divv;\Omega)}$.
We will use that each $\eta \in H_{0,\Gamma_D}^1(\Omega)$ can be completed to a pair $(\eta,\vec{v}) \in V_{\mp}$:


\begin{lemma} \label{lem1} For each $\eta \in H^1_{0,\Gamma_D}(\Omega)$, there exists a $\vec{v} \in H(\divv;\Omega)$ with $(\eta,\vec{v}) \in V_{\mp}$.
\end{lemma}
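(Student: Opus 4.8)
The plan is to produce $\vec v$ explicitly as the gradient of the solution of an auxiliary coercive problem on $H^1_{0,\Gamma_D}(\Omega)$ whose data encodes the Robin coupling term appearing in the definition of $V_\mp$.

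First I would note that, by the trace theorem for the bounded Lipschitz domain $\Omega$, $\eta|_{\Gamma_R}\in L_2(\Gamma_R)$, so that
\[
F\colon\ \psi \mapsto \pm\, i \int_{\Gamma_R} \eta\,\overline{\psi}\,ds
\]
defines a bounded anti-linear functional on $H^1_{0,\Gamma_D}(\Omega)$, since $|F(\psi)|\le \|\eta\|_{L_2(\Gamma_R)}\|\psi\|_{L_2(\partial\Omega)}\lesssim \|\eta\|_{H^1(\Omega)}\|\psi\|_{H^1(\Omega)}$. By the Riesz representation theorem there is then a unique $z\in H^1_{0,\Gamma_D}(\Omega)$ with
\[
\int_\Omega \nabla z\cdot\nabla\overline{\psi} + z\,\overline{\psi}\,dx = F(\psi)\qquad (\psi\in H^1_{0,\Gamma_D}(\Omega)),
\]
the bilinear form on the left being the canonical $H^1(\Omega)$-inner product, which is trivially coercive on $H^1_{0,\Gamma_D}(\Omega)$, irrespective of whether $|\Gamma_D|>0$ (this is why I keep the zeroth-order term).

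Next I would set $\vec v:=\nabla z\in L_2(\Omega)^d$ and check that it lies in $H(\divv;\Omega)$: testing the identity above with $\psi\in C_c^\infty(\Omega)$ the right-hand side vanishes, whence $-\int_\Omega \vec v\cdot\nabla\overline{\psi}\,dx=\int_\Omega z\,\overline{\psi}\,dx$, i.e.\ $\divv\vec v = z\in L_2(\Omega)$. Finally, Green's formula for $H(\divv;\Omega)$ yields, for every $\psi\in H^1_{0,\Gamma_D}(\Omega)$,
\begin{align*}
\int_{\partial\Omega}\vec v\cdot\vec n\,\overline{\psi}\,ds
&= \int_\Omega (\divv\vec v)\,\overline{\psi} + \vec v\cdot\nabla\overline{\psi}\,dx\\
&= \int_\Omega z\,\overline{\psi} + \nabla z\cdot\nabla\overline{\psi}\,dx = F(\psi) = \pm\, i\int_{\Gamma_R}\eta\,\overline{\psi}\,ds,
\end{align*}
which is exactly the relation $\int_{\partial\Omega}\vec v\cdot\vec n\,\overline{\psi}\,ds\mp i\int_{\Gamma_R}\eta\,\overline{\psi}\,ds=0$ defining $V_\mp$; hence $(\eta,\vec v)\in V_\mp$.

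I do not expect a genuine obstacle here; the only point that needs a little care is that the boundary integral $\int_{\partial\Omega}\vec v\cdot\vec n\,\overline{\psi}\,ds$ in the definition of $V_\mp$ must be read as the duality pairing between the normal trace $\vec v\cdot\vec n\in H^{-\frac12}(\partial\Omega)$ and the trace of $\psi$, so that Green's formula applies and the computation is independent of the chosen $H^1$-representative. An alternative, equivalent route would be to invoke directly the surjectivity of the normal-trace operator $H(\divv;\Omega)\to H^{-\frac12}(\partial\Omega)$ to hit the functional $F$, which factors through the trace on $\partial\Omega$ because it vanishes on $H^1_0(\Omega)$; the construction via $z$ has the mild advantage of being self-contained and of also giving the quantitative bound $\|\vec v\|_{H(\divv;\Omega)}\lesssim \|\eta\|_{H^1(\Omega)}$.
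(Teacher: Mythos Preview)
Your proof is correct and follows essentially the same approach as the paper: both construct $\vec v$ as the gradient of the solution $z$ (the paper calls it $u$) of the coercive auxiliary problem $\int_\Omega \nabla z\cdot\nabla\overline{\psi}+z\,\overline{\psi}\,dx=\pm i\int_{\Gamma_R}\eta\,\overline{\psi}\,ds$ on $H^1_{0,\Gamma_D}(\Omega)$, observe $\divv\vec v=z$, and then read off the boundary condition via Green's formula. Your version is simply more explicit about the well-posedness of the auxiliary problem and the duality interpretation of the normal-trace integral.
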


\begin{proof}
Given  $\eta \in H^1_{0,\Gamma_D}(\Omega)$, take $\vec{v}:=\nabla u$ where $u \in H^1_{0,\Gamma_D}(\Omega)$ solves
$$
\int_\Omega \nabla u\cdot \nabla \overline{\psi}+u  \overline{\psi} \,dx=\pm i \int_{\Gamma_R} \eta \overline{\psi}\,ds\quad (\psi  \in H^1_{0,\Gamma_D}(\Omega)).
$$
Then $\divv \vec{v}=u$
and $\pm i \int_{\Gamma_R} \eta \overline{\psi}\,ds=\int_\Omega \vec{v} \cdot \nabla \overline{\psi}+\divv \vec{v} \, \overline{\psi} \,dx=\int_{\partial \Omega} \vec{v}\cdot \vec{n}\,\overline{\psi}\,ds$ ($\psi  \in H^1_{0,\Gamma_D}(\Omega)$), i.e., $(\eta,\vec{v}) \in V_{\mp}$.
\end{proof}

The next theorem shows that for $q \in V_{\mp}'$, and for any fixed $\kappa>0$, finding $(\phi,\vec{u}) \in U$ such that \eqref{eq:ultra-weak} holds for all $(\eta,\vec{v}) \in V_{\mp}$ is a \emph{well-posed, consistent ultra-weak first order formulation of the Helmholtz equation \eqref{eq:helmholtz}}.
Note that in this formulation \emph{all boundary conditions are natural}.

\begin{theorem} \label{thm:main} For any fixed $\kappa>0$, it holds that $B_\kappa \in \Lis(U,V_{\mp}')$.
\end{theorem}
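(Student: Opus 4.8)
The plan is to reduce the statement to the well-posedness \eqref{wellposedness} of the second-order formulation. Writing $\mathbbm{v}=(\eta,\vec{v})$, one reads off from \eqref{eq:ultra-weak} that $B_\kappa(\phi,\vec{u})(\eta,\vec{v})=\langle(\phi,\vec{u}),B_\kappa'(\eta,\vec{v})\rangle_U$ with $B_\kappa'(\eta,\vec{v})=\big(-\eta-\tfrac1\kappa\divv\vec{v},\ \tfrac1\kappa\nabla\eta-\vec{v}\big)\in U$, so $B_\kappa'$ is a bounded operator $V_\mp\to U$ (here $\kappa\ge\kappa_0$ is used) and $B_\kappa$ is its adjoint. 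Since $U$ and $V_\mp$ are Hilbert spaces, $B_\kappa\in\Lis(U,V_\mp')$ is equivalent to $B_\kappa'\in\Lis(V_\mp,U)$, and for the latter it suffices to prove that $B_\kappa'$ is bounded below and surjective. Both will come out of a single computation that, for $(\eta,\vec{v})\in V_\mp$, exhibits $\eta$ as the solution of a second-order variational Helmholtz problem.

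Concretely, I would set $(g_1,\vec{g}_2):=B_\kappa'(\eta,\vec{v})$, so that $\vec{v}=\tfrac1\kappa\nabla\eta-\vec{g}_2$, and pair the first-component identity $g_1=-\eta-\tfrac1\kappa\divv\vec{v}$ with test functions $\psi\in H^1_{0,\Gamma_D}(\Omega)$. Integrating $\int_\Omega(\divv\vec{v})\overline\psi\,dx$ by parts (legitimate since $\vec{v}\in H(\divv;\Omega)$ and $\psi\in H^1(\Omega)$, the boundary term being the $H^{-\frac12}(\partial\Omega)$--$H^{\frac12}(\partial\Omega)$ duality pairing of $\vec{v}\cdot\vec{n}$ with $\psi$) and using the defining boundary relation of $V_\mp$ with this same $\psi$ to rewrite the boundary term as $\pm i\int_{\Gamma_R}\eta\overline\psi\,ds$, a short manipulation yields
\be \label{eq:2ndreduced}
\int_\Omega \tfrac1{\kappa^2}\nabla\eta\cdot\nabla\overline\psi-\eta\overline\psi\,dx\mp\tfrac{i}{\kappa}\int_{\Gamma_R}\eta\overline\psi\,ds=\int_\Omega g_1\overline\psi+\tfrac1\kappa\vec{g}_2\cdot\nabla\overline\psi\,dx\qquad(\psi\in H^1_{0,\Gamma_D}(\Omega)).
\ee
The left-hand side is the action on $\psi$ of $L_\kappa$ with its Robin sign flipped, which by \eqref{wellposedness} is again an isomorphism $H^1_{0,\Gamma_D}(\Omega)\to H^1_{0,\Gamma_D}(\Omega)'$ (the sign in \eqref{2nd} being either valid option), while the right-hand side defines a functional of norm $\lesssim\|(g_1,\vec{g}_2)\|_U$. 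Hence $\|\eta\|_{H^1(\Omega)}\lesssim\|B_\kappa'(\eta,\vec{v})\|_U$ with a ($\kappa$-dependent, which is allowed) constant, and then $\vec{v}=\tfrac1\kappa\nabla\eta-\vec{g}_2$ together with $\divv\vec{v}=-\kappa(\eta+g_1)$ gives $\|\vec{v}\|_{H(\divv;\Omega)}\lesssim\|B_\kappa'(\eta,\vec{v})\|_U$; so $\|(\eta,\vec{v})\|_{V_\mp}\lesssim\|B_\kappa'(\eta,\vec{v})\|_U$, i.e.\ $B_\kappa'$ is bounded below.

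For surjectivity I would run the computation in reverse: given $(g_1,\vec{g}_2)\in U$, let $\eta\in H^1_{0,\Gamma_D}(\Omega)$ be the solution of \eqref{eq:2ndreduced} (unique by the preceding paragraph) and set $\vec{v}:=\tfrac1\kappa\nabla\eta-\vec{g}_2\in L_2(\Omega)^d$. Testing \eqref{eq:2ndreduced} with $\psi\in C_c^\infty(\Omega)$ shows $\divv\vec{v}=-\kappa(\eta+g_1)\in L_2(\Omega)$, so $\vec{v}\in H(\divv;\Omega)$; testing with a general $\psi\in H^1_{0,\Gamma_D}(\Omega)$ and integrating by parts recovers exactly the boundary relation defining $V_\mp$, so $(\eta,\vec{v})\in V_\mp$; and substituting back, the two components of $B_\kappa'(\eta,\vec{v})$ collapse to $g_1$ and $\vec{g}_2$. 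Combined with the lower bound this gives $B_\kappa'\in\Lis(V_\mp,U)$, whence $B_\kappa\in\Lis(U,V_\mp')$.

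The one delicate point I anticipate is the bookkeeping: performing the integrations by parts rigorously when $\vec{v}$ is merely in $H(\divv;\Omega)$, so that its normal trace lives in $H^{-\frac12}(\partial\Omega)$ and the boundary term must be read as a duality pairing, keeping the complex conjugations straight, and checking that the Robin sign produced in \eqref{eq:2ndreduced} is indeed the flipped one so that \eqref{wellposedness} genuinely covers it. Everything else is routine, and Lemma~\ref{lem1} is not even needed for this route, although it confirms that testing against all $\eta\in H^1_{0,\Gamma_D}(\Omega)$ in \eqref{eq:2ndreduced} is consistent with the choice of $V_\mp$.
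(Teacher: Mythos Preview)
Your proof is correct and takes a genuinely different route from the paper's. The paper works directly with $B_\kappa$: it first shows injectivity by taking $(\phi,\vec{u})\in\ker B_\kappa$, deducing from interior test functions that $\phi\in H^1(\Omega)$ and $\vec{u}\in H(\divv;\Omega)$, then using carefully chosen test pairs (an auxiliary Neumann problem and Lemma~\ref{lem1}) to recover the boundary conditions and conclude $L_\kappa\phi=0$; for surjectivity it represents an arbitrary $q\in V_\mp'$ via Riesz, makes a substitution, and reduces to solving $L_\kappa\undertilde{\phi}=\undertilde{q}$. You instead work with the adjoint $B_\kappa'$, which has the advantage of being an explicit differential operator on $V_\mp$. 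Your computation \eqref{eq:2ndreduced} is essentially the content of the paper's later Lemma~\ref{lem:easy}, but you carry it out for arbitrary $(g_1,\vec{g}_2)\in U$ rather than only for $\vec{g}_2\in H_{0,\Gamma_N\cup\Gamma_R}(\divv;\Omega)$, which lets it serve both as a lower bound on $B_\kappa'$ and as the construction for surjectivity. Your approach is shorter, avoids Lemma~\ref{lem1} and the auxiliary Neumann trick entirely, and makes the equivalence with the second-order problem transparent; the paper's approach has the minor advantage of treating $B_\kappa$ itself and not relying on the duality equivalence $B_\kappa\in\Lis(U,V_\mp')\Leftrightarrow B_\kappa'\in\Lis(V_\mp,U)$. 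Your cautionary remarks about the delicate points (reading the boundary term as a duality pairing, the flipped Robin sign being covered by \eqref{wellposedness}) are apt and correctly identified.
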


\begin{proof} Membership of $B_\kappa \in \cL(U,V_{\mp}')$ is immediate.
In Steps (i) and (ii) we show that $B_\kappa$ is injective and surjective which completes the proof by an application of the open mapping theorem.

(i) Let $(\phi,\vec{u}) \in \ker B_\kappa$. Using that $\big(B_\kappa(\phi,\vec{u})\big)(\eta,\vec{v})$ 
vanishes in particular for smooth $(\eta,\vec{v})$ with support within $\Omega$, it follows that in distributional sense
$$
\divvv \vec{u}+\kappa \phi=0,\quad
\nabla\phi-\kappa \vec{u}=0,
$$
and so in particular $\phi\in H^1(\Omega)$ and $\vec{u} \in H(\divv;\Omega)$.
This allows us to apply integration-by-parts, and to conclude that
\be \label{8}
\int_{\partial\Omega} \vec{u}\cdot\vec{n} \,\overline{\eta}-\phi \overline{\vec{v}}\cdot\vec{n}\,ds=0 \quad((\eta,\vec{v})\in V_{\mp}).
\ee


Taking $(\eta,\vec{v}) \in \{0\}\times H_{0,\Gamma_N \cup \Gamma_R}(\divv;\Omega)$ in \eqref{8}\footnote{\new{$H_{0,\Gamma_N \cup \Gamma_R}(\divv;\Omega):=\{\vec{v}\in H(\divv;\Omega)\colon \vec{v} \cdot\vec{n}|_{\Gamma_N \cup \Gamma_R}=0 \text{ in } H^{-\frac12}(\Gamma_N \cup \Gamma_R)\}$.}} we infer that
\be \label{e0}
\phi \in H^1_{0,\Gamma_D}(\Omega).
\ee
(Indeed for $|\Gamma_D|>0$ and $\phi \in H^1(\Omega)$, let $u$ solve $-\Delta u + u=0$, $\tfrac{\partial u}{\partial \vec{n}}=\phi$ on $\Gamma_D$, $\tfrac{\partial u}{\partial \vec{n}}=0$ on $\Gamma_N \cup \Gamma_R$. Then $\vec{v}:=\nabla u \in H_{0,\Gamma_N \cup \Gamma_R}(\divv;\Omega)$, and $\int_{\Gamma_D} |\phi|^2\,dx=\int_{\Gamma_D} \phi \,\overline{\vec{v}}\cdot \vec{n}\,dx=0$ shows \eqref{e0}.)

Given $\eta \in H^1_{0,\Gamma_D}(\Omega)$, let $\vec{v} \in H(\divv;\Omega)$ be such that $(\eta,\vec{v}) \in V_{\mp}$, see Lemma~\ref{lem1}.
Then, thanks to  $\phi \in H^1_{0,\Gamma_D}(\Omega)$, the definition of  $V_{\mp}$ and \eqref{8} show that
$$
\int_{\partial\Omega} \vec{u}\cdot \vec{n} \,\overline{\eta}\,ds \pm i \int_{\Gamma_R} \phi \overline{\eta}\,ds=
\int_{\partial\Omega} \vec{u}\cdot \vec{n} \,\overline{\eta}\,ds -\int_{\partial\Omega} \phi \overline{\vec{v}}\cdot\vec{n} \,ds=0.
$$
From $\nabla\phi=\kappa \vec{u}$ and $\divvv \vec{u}+\kappa \phi=0$, we infer that for 
$\eta \in H_{0,\Gamma_D}^1(\Omega)$,
\begin{align*}
(L_\kappa \phi)(\eta) &= \int_\Omega \tfrac{1}{\kappa^2} \nabla \phi\cdot \nabla \overline{\eta}- \phi \overline{\eta} \,dx \pm \tfrac{i}{\kappa} \int_{\Gamma_R} \phi \overline{\eta}\,ds
\\ &=
\int_\Omega \tfrac{1}{\kappa} \vec{u}\cdot \nabla \overline{\eta}- \phi \overline{\eta} \,dx- \tfrac{1}{\kappa} \int_{\partial\Omega} \vec{u}\cdot\vec{n}\, \overline{\eta}\,ds
\\ & = - \tfrac{1}{\kappa} \int_\Omega (\divvv \vec{u}+ \kappa \phi) \overline{\eta}\,dx=0.
\end{align*}
From $L_\kappa \in \cL(H_{0,\Gamma_D}^1(\Omega),H_{0,\Gamma_D}^1(\Omega)')$ being injective, we conclude that $\phi=0$ and so $\vec{u}=0$, and thus that $B_\kappa$ is injective.

(ii) To show surjectivity of $B_\kappa$, let $q \in V_{\mp}'$. Riesz' representation theorem shows that there exists a pair $(\zeta,\vec{w}) \in V_{\mp}$ such that
$$
q(\eta,\vec{v})=\int_\Omega \vec{w}\cdot \overline{\vec{v}}+\divvv \vec{w} \,\divvv \overline{\vec{v}}+\zeta \overline{\eta}+\nabla \zeta \cdot \nabla \overline{\eta} \,dx
\quad( (\eta,\vec{v}) \in V_{\mp}).
$$
Setting $\undertilde{\vec{u}}\new{:=}\vec{u}+\vec{w}$, $\undertilde{\phi}\new{:=}\phi +\kappa \divvv \vec{w}$,
the equation $B_\kappa (\phi,\vec{u})=q$ is equivalent to 
\be \label{3}
\begin{split}
\int_\Omega \tfrac{1}{\kappa} \undertilde{\vec{u}} \cdot \nabla \overline{\eta}&- \undertilde{\phi} \overline{\eta}-  \tfrac{1}{\kappa} \undertilde{\phi} \,\divvv \overline{\vec{v}}-  \undertilde{\vec{u}}  \cdot 
\overline{\vec{v}}\,dx=\\
& \int_\Omega 
(\zeta-\kappa \divvv \vec{w})\overline{\eta}+(\nabla \zeta+\tfrac{\vec{w}}{\kappa})\cdot \nabla \overline{\eta}\,dx  =:\undertilde{q}(\eta)\quad((\eta,\vec{v}) \in V_{\mp}),
\end{split}
\ee
where $\undertilde{q} \in H_{0,\Gamma_D}^1(\Omega)'$.

Given  $\undertilde{\phi} \in H_{0,\Gamma_D}^1(\Omega)$, let $\undertilde{\vec{u}}:=\frac{1}{\kappa} \nabla \undertilde{\phi}$. 
Then \eqref{3} reads as
\be \label{4}
\int_\Omega \tfrac{1}{\kappa^2} \nabla \undertilde{\phi}  \cdot \nabla \overline{\eta}-  \undertilde{\phi} \overline{\eta}- 
\tfrac{1}{\kappa} ( \undertilde{\phi} \divvv \overline{\vec{v}}+\nabla \undertilde{\phi} \cdot \overline{\vec{v}})\,dx= \undertilde{q}(\eta)\quad((\eta,\vec{v}) \in V_{\mp}).
\ee
Using that $\int_\Omega \undertilde{\phi}  \divvv \overline{\vec{v}}+\nabla \undertilde{\phi} \cdot \overline{\vec{v}}\,dx=
\int_{\partial\Omega}  \undertilde{\phi} \overline{\vec{v}} \cdot \vec{n}\,dx= \mp i \int_{\Gamma_R} \undertilde{\phi} \overline{\eta}\,ds$
by definition of $V_{\mp}$, it follows that \eqref{4} is equivalent to $L_\kappa \undertilde{\phi}= \undertilde{q}$.
Knowing that $L_\kappa \in \cL(H_{0,\Gamma_D}^1(\Omega),H_{0,\Gamma_D}^1(\Omega)')$ is surjective, a solution of the latter equation exists, which completes the proof.
\end{proof}

\begin{remark}(Alternative \new{definition of a ultra-weak formulation})  The boundary condition $\vec{v}\cdot\vec{n} \mp i \eta =0$ on $\Gamma_R$, which is incorporated in the definition of $V_{\mp}$, will pose some challenges for the analysis. Considering $\partial\Omega=\Gamma_R$, alternatively we expect that well-posedness similarly to Theorem~\ref{thm:main} can also be shown for $V_{\mp}$ and $U$ being replaced by $H^1(\Omega) \times H(\divv;\Omega)$ and $L_2(\Omega) \times L_2(\Omega)^d \times H^{\frac12}(\partial\Omega)$, respectively, the latter factor being the space for an additional variable \new{$\tilde{\phi}$ that represents} the trace $\phi|_{\partial\Omega}$, and
\new{sesquilinear form $(\tilde{B}_\kappa(\phi,\vec{u},\tilde{\phi}))(\eta,\vec{v}):=(B_\kappa(\phi,\vec{u}))(\eta,\vec{v})+\int_{\partial\Omega} \frac{1}{\kappa}\tilde{\phi}(\overline{\vec{v}}\cdot\vec{n}\pm i \overline{\eta})\,ds$}.
In this case, however, for non-smooth solutions best approximation errors in $L_2(\Omega) \times L_2(\Omega)^d \times H^{\frac12}(\partial\Omega)$ will generally be dominated by the error in this additional trace variable \new{$\tilde{\phi}$}. We therefore do not investigate this option.
\end{remark}

\subsection{Adjoint problem} 
From $U \simeq U'$, one has that $B_{\kappa} \in \Lis(U,V_{\mp}')$ is equivalent to 
$$
B'_{\kappa}\colon (\eta,\vec{v}) \mapsto (- \tfrac{1}{\kappa} \divvv \vec{v}- \eta, \tfrac{1}{\kappa} \nabla \eta- \vec{v}) \in \Lis(V_{\mp},U).
$$

The following lemma relates an adjoint first order system to an adjoint second order problem, both with homogeneous boundary conditions.
The approximability of the solution of an adjoint second order problem by finite element functions was the key to analyze standard finite element Galerkin discretizations of the Helmholtz problem.
Later, in Sect.~\ref{sec:practical}, we will use results from \cite{202.8} about this approximability to analyze the approximability of an adjoint first order system, which in turn will be the key to demonstrate quasi-optimality of our FOSLS discretization.

\begin{lemma} \label{lem:easy} For $h_1 \in L_2(\Omega)$ and $\vec{h}_2 \in H_{0,\Gamma_N \cup \Gamma_R}(\divv;\Omega)$ it holds that $(\eta,\vec{v}) =
(\eta,\frac{1}{\kappa}\nabla \eta-\vec{h}_2)\in V_{\mp}$ solves
$$
B'_{\kappa} (\eta,\vec{v})=(h_1,\vec{h}_2)
$$
if and only if $\eta \in H^1_{0,\Gamma_D}(\Omega)$ solves
$$
(L'_\kappa \eta)(\zeta):=\int_\Omega \tfrac{1}{\kappa^2} \nabla \eta  \cdot \nabla \overline{\zeta}-\eta \overline{\zeta}\,dx \mp \tfrac{i}{\kappa}
 \int_{\Gamma_R} \eta \overline{\zeta}\,ds=\int_\Omega ( h_1 -\tfrac{1}{\kappa} \divvv \vec{h}_2)  \overline{\zeta} \,dx.
$$
$(\zeta \in H_{0,\Gamma_D}^1(\Omega))$.
\end{lemma}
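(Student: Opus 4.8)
The plan is to verify the claimed equivalence by a direct, two-directional computation, exploiting that the map $\vec{h}_2 \mapsto \vec{v} := \tfrac{1}{\kappa}\nabla\eta - \vec{h}_2$ is set up precisely so that the second component of $B'_\kappa(\eta,\vec{v}) = (h_1,\vec{h}_2)$ is automatic. Indeed, $B'_\kappa(\eta,\vec{v}) = (-\tfrac{1}{\kappa}\divvv\vec{v} - \eta,\ \tfrac{1}{\kappa}\nabla\eta - \vec{v})$, so with $\vec{v} = \tfrac{1}{\kappa}\nabla\eta - \vec{h}_2$ the second component is identically $\vec{h}_2$, and the whole system $B'_\kappa(\eta,\vec{v}) = (h_1,\vec{h}_2)$ collapses to the single scalar identity $-\tfrac{1}{\kappa}\divvv\vec{v} - \eta = h_1$ in $L_2(\Omega)$. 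First I would record this reduction, then substitute $\divvv\vec{v} = \tfrac{1}{\kappa}\divvv(\nabla\eta) - \divvv\vec{h}_2 = \tfrac{1}{\kappa}\Delta\eta - \divvv\vec{h}_2$ (distributionally), giving $-\tfrac{1}{\kappa^2}\Delta\eta - \eta = h_1 - \tfrac{1}{\kappa}\divvv\vec{h}_2$ in $\Omega$.

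Next I would check that this pair $(\eta,\vec{v})$ actually lies in $V_\mp$ and that membership there is exactly the encoding of the adjoint Robin/Neumann/Dirichlet boundary conditions appearing in $L'_\kappa$. One needs $\eta \in H^1_{0,\Gamma_D}(\Omega)$ (built in on the trial side of $L'_\kappa$ as well), and $\vec{v} \in H(\divv;\Omega)$ — the latter holds once we know $\Delta\eta \in L_2(\Omega) + \divvv\vec{h}_2$, i.e. once $\eta$ solves the second-order equation, or conversely is part of the hypothesis that $\vec{v}\in H(\divv;\Omega)$. The key point is the boundary condition defining $V_\mp$: $\int_{\partial\Omega}\vec{v}\cdot\vec{n}\,\overline{\psi}\,ds \mp i\int_{\Gamma_R}\eta\overline{\psi}\,ds = 0$ for all $\psi \in H^1_{0,\Gamma_D}(\Omega)$. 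Using $\vec{v} = \tfrac{1}{\kappa}\nabla\eta - \vec{h}_2$ with $\vec{h}_2 \in H_{0,\Gamma_N\cup\Gamma_R}(\divv;\Omega)$, integration by parts converts $\int_{\partial\Omega}\vec{v}\cdot\vec{n}\,\overline{\psi}\,ds$ into $\int_\Omega \divvv\vec{v}\,\overline{\psi} + \vec{v}\cdot\nabla\overline{\psi}\,dx$, and since $\vec{h}_2\cdot\vec{n}$ vanishes on $\Gamma_N\cup\Gamma_R$ this $V_\mp$-condition is equivalent to the weak form of $\tfrac{1}{\kappa}\tfrac{\partial\eta}{\partial\vec n} \mp i\eta = 0$ on $\Gamma_R$ together with $\tfrac{\partial\eta}{\partial\vec n}=0$ on $\Gamma_N$ — which are precisely the natural boundary conditions built into the sesquilinear form $L'_\kappa$.

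Concretely I would run the forward direction as follows: assume $(\eta,\vec v)\in V_\mp$ solves $B'_\kappa(\eta,\vec v)=(h_1,\vec h_2)$; test the scalar equation $-\tfrac1\kappa\divvv\vec v - \eta = h_1$ against $\overline\zeta$ for $\zeta\in H^1_{0,\Gamma_D}(\Omega)$, integrate the $\divvv\vec v$ term by parts to produce $\tfrac1\kappa\int_\Omega \vec v\cdot\nabla\overline\zeta\,dx - \tfrac1\kappa\int_{\partial\Omega}\vec v\cdot\vec n\,\overline\zeta\,ds$, substitute $\vec v = \tfrac1\kappa\nabla\eta - \vec h_2$, use $\vec h_2\cdot\vec n|_{\Gamma_N\cup\Gamma_R}=0$ and $\zeta|_{\Gamma_D}=0$ so that the boundary term reduces to $-\tfrac1\kappa\int_{\Gamma_R}\vec v\cdot\vec n\,\overline\zeta\,ds$, and finally invoke the $V_\mp$-membership (with $\psi=\zeta$) to replace $\int_{\Gamma_R}\vec v\cdot\vec n\,\overline\zeta\,ds$ by $\pm i\int_{\Gamma_R}\eta\overline\zeta\,ds$; collecting terms yields exactly $(L'_\kappa\eta)(\zeta) = \int_\Omega(h_1 - \tfrac1\kappa\divvv\vec h_2)\overline\zeta\,dx$ after also integrating $\tfrac1\kappa\int_\Omega\vec h_2\cdot\nabla\overline\zeta$ back by parts. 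The reverse direction is the same chain of identities read backwards: given $\eta\in H^1_{0,\Gamma_D}(\Omega)$ solving the $L'_\kappa$ equation, define $\vec v := \tfrac1\kappa\nabla\eta-\vec h_2$; testing with $\zeta\in C_c^\infty(\Omega)$ shows $\divvv\vec v\in L_2(\Omega)$ with $-\tfrac1\kappa\divvv\vec v - \eta = h_1$, hence $\vec v\in H(\divv;\Omega)$ and the operator equation holds; then the boundary terms in the variational identity force the $V_\mp$-condition on $(\eta,\vec v)$.

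The computations themselves are routine integrations by parts; the only genuine subtlety — and the step I would be most careful about — is the boundary-term bookkeeping on $\Gamma_N$ versus $\Gamma_R$ versus $\Gamma_D$, i.e. making sure that the duality pairings $\int_{\partial\Omega}\vec v\cdot\vec n\,\overline\zeta\,ds$ (a priori an $H^{-1/2}$–$H^{1/2}$ pairing, since $\vec v\in H(\divv;\Omega)$) are legitimate, that $\vec h_2\cdot\vec n$ really does annihilate the relevant pieces, and that the $V_\mp$ test-function space $H^1_{0,\Gamma_D}(\Omega)$ is rich enough to localize these identities to $\Gamma_R$. Once that is handled cleanly, nothing else requires care, and the "if and only if" falls out of the symmetry of the argument.
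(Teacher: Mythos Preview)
Your proposal is correct and follows essentially the same route as the paper's proof: both directions are handled by integration by parts together with the vanishing normal trace of $\vec{h}_2$ on $\Gamma_N\cup\Gamma_R$ and the $V_\mp$-boundary condition. One small slip in your forward-direction narrative: you do not need to (and cannot, since $\nabla\eta$ has no normal trace a priori) ``reduce'' the boundary term to $\Gamma_R$ before invoking $V_\mp$-membership---the $V_\mp$ condition applies directly to the full pairing $\int_{\partial\Omega}\vec v\cdot\vec n\,\overline\zeta\,ds$ and converts it to $\pm i\int_{\Gamma_R}\eta\overline\zeta\,ds$ in one step, after which the $\vec h_2$-term is handled separately in the volume as you indicate.
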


\begin{proof} For completeness we provide the easy proof. If $(\eta,\vec{v}) \in V_{\mp}$ solves the first order system, then
\begin{align*}
\int_\Omega \tfrac{1}{\kappa^2} \nabla \eta  \cdot \nabla \overline{\zeta}-\eta \overline{\zeta}\,dx \mp \tfrac{i}{\kappa}
\int_{\Gamma_R} \eta \overline{\zeta}\,ds=\int_\Omega \tfrac{1}{\kappa} (\vec{h}_2+\vec{v})  \cdot \nabla \overline{\zeta}-\eta \overline{\zeta}\,dx \mp 
\tfrac{i}{\kappa}\int_{\Gamma_R} \eta \overline{\zeta}\,ds\\
=\int_\Omega (-\tfrac{1}{\kappa} \divvv (\vec{h}_2+\vec{v}) -\eta)  \overline{\zeta}\,dx=\int_\Omega (h_1 -\tfrac{1}{\kappa} \divvv \vec{h}_2)  \overline{\zeta} \,dx.
\end{align*}

Conversely, let $\eta \in H^1_{0,\Gamma_D}(\Omega)$ satisfy the variational formulation of the second order equation. Then setting $\vec{v}=\frac{1}{\kappa}\nabla \eta-\vec{h}_2$, we have
$$
\int_\Omega \tfrac{1}{\kappa} \vec{v}  \cdot \nabla \overline{\zeta}-\eta \overline{\zeta}\,dx \mp \tfrac{i}{\kappa} \int_{\Gamma_R} \eta \overline{\zeta}\,ds=\int_\Omega h_1   \overline{\zeta} \,dx \quad (\zeta \in H_{0,\Gamma_D}^1(\Omega)).
$$
It shows that $\vec{v}  \in H(\divv;\Omega)$, and from a subsequent integration-by-parts one infers $- \tfrac{1}{\kappa} \divvv \vec{v}- \eta=h_1$, and
$\int_{\partial\Omega} \vec{v}\cdot\vec{n} \,\overline{\zeta} \,ds \mp i \int_{\Gamma_R} \eta \overline{\zeta} \,ds=0$ $(\zeta \in H_{0,\Gamma_D}^1(\Omega))$.
\end{proof}

Although in this work we will focus on the ultra-weak first order formulation, finally in this section for completeness in the following remark it is shown that also the first order system \eqref{1storder} is well-posed w.r.t.~a proper choice of function spaces.

\begin{remark}[Well-posedness of the `mild' first order system \eqref{1storder}] \label{rem1} Since bounded invertibility of $B'_{\kappa}$ is equally well valid when $V_{\mp}$ is replaced by $V_{\pm}$, the system \eqref{1storder} with homogeneous boundary data $g_D=0$ and $g=0$ corresponds to a mapping in $\Lis(V_{\pm},U)$. 

Since furthermore  $B'_{\kappa}\in \cL(H^1(\Omega) \times H(\divv;\Omega),U)$, 
\begin{align*}
T\colon(\phi,\vec{u})\mapsto \Big(\phi|_{\Gamma_D},\psi \mapsto \int_{\partial\Omega} &\vec{u}\cdot\vec{n} \, \overline{\psi}\,ds \pm i \int_{\Gamma_R} \phi \overline{\psi}\,ds\Big) \\
&\in \cL\big(H^1(\Omega) \times H(\divv;\Omega), H^{\frac{1}{2}}(\Gamma_D) \times H^{-\frac{1}{2}}(\Gamma_N \cup \Gamma_R)\big)
\end{align*}
is surjective, and 
$V_{\pm}=\ker T$, from \cite[Lemma 2.7]{75.28} it follows that the system \eqref{1storder} for general boundary data corresponds to a mapping in
$$
\Lis(H^1(\Omega) \times H(\divv;\Omega), U \times 
H^{\frac{1}{2}}(\Gamma_D) \times H^{-\frac{1}{2}}(\Gamma_N \cup \Gamma_R)).
$$

To see the aforementioned surjectivity, let $(\eta,f) \in  H^{\frac{1}{2}}(\Gamma_D) \times H^{-\frac{1}{2}}(\Gamma_N \cup \Gamma_R)$. There exists a $\phi \in H^1(\Omega)$ with $\phi|_{\Gamma_D} = \eta$. With $\tilde{f} \in H^{-\frac{1}{2}}(\Gamma_N \cup \Gamma_R)$ defined by
$\tilde{f}(\psi)=f(\psi) \mp i \int_{\Gamma_R} \phi \overline{\psi}\,ds$, as in Lemma~\ref{lem1} let $z \in H^1_{0,\Gamma_D}(\Omega)$ solve
$$
\int_\Omega \nabla z \cdot \nabla \overline{\psi}+z  \overline{\psi} \,dx=\tilde{f}(\psi)\quad (\psi  \in H^1_{0,\Gamma_D}(\Omega)).
$$
Then $\vec{u}:=\nabla z$ satisfies $\int_{\partial \Omega} \vec{u}\cdot \vec{n}\,\overline{\psi}\,ds=\tilde{f}(\psi)$  ($\psi  \in H^1_{0,\Gamma_D}(\Omega)$).
\end{remark}

\
\section{Ultra-weak FOSLS} \label{sec:ultra-weakFOSLS}
We discretize the ultra-weak first order system $B_\kappa(\phi,\vec{u})=q$ using a least-squares or minimal residual discretisation. Recalling that $B_\kappa\in \Lis(U,V_{\mp}')$, when doing so we will equip $V_{\mp}$ with the so-called optimal test norm, which has the property that residual minimization in the resulting dual norm on $V_{\mp}'$
means error minimization in the canonical norm on $U=L_2(\Omega)\times L_2(\Omega)^d$.

\subsection{Optimal test norm}
In the following often we use the shorthand notations $\mathbbm{u}$, $\mathbbm{v}$ etc. to denote 
variables from $U$ and $V_\mp$ instead of writing them as pairs.
Using that $U \simeq U'$, we equip $V_{\mp}$ with the \emph{optimal test norm}
\be \label{eq:optimal test norm}
\|\mathbbm{v}\|_{V_{\mp,\kappa}}:= \|B'_\kappa \mathbbm{v}\|_{U},
\ee
\new{and corresponding inner product $\langle\,,\,\rangle_{V_{\mp,\kappa}}$,} 
and so $V_\mp'$ with the associated dual norm $\|\cdot\|_{V_{\mp,\kappa}'}$, \new{and corresponding inner product $\langle\,,\,\rangle_{V_{\mp,\kappa}'}$.}
Then 
$$
\|B_\kappa \mathbbm{u} \|_{V_{\mp,\kappa}'}=
\sup_{0 \neq \mathbbm{v}\in V_{\mp}} \frac{|(B_\kappa\mathbbm{u})(\mathbbm{v})|}{\|B'_\kappa \mathbbm{v}\|_U}
=
\sup_{0 \neq \mathbbm{v}\in V_{\mp}} \frac{|\langle \mathbbm{u},B'_\kappa\mathbbm{v}\rangle_U|}{\|B'_\kappa \mathbbm{v}\|_U}= \|\mathbbm{u}\|_U,
$$
i.e., $B_\kappa \in \Lis(U,V_{\mp}')	$ is an \emph{isometry}.

Consequently, for any $q \in V_{\mp}'$, and any closed, e.g.~finite dimensional subspace $\{0\} \subsetneq U^\delta \subset U$, the least squares solution
\be \label{ls}
\mathbbm{u}^\delta:=\argmin_{\mathbbm{w}^\delta \in U^\delta} \|q- B_\kappa \mathbbm{w}^\delta\|_{V_{\mp,\kappa}'}
\ee
is the \emph{best} approximation from $U^\delta$ w.r.t. $\|\cdot\|_U=\|\cdot\|_{L_2(\Omega) \times L_2(\Omega)^d}$ to the solution $\mathbbm{u}=(\phi,\vec{u}) \in U$ of the ultra-weak first order formulation
$$
B_\kappa \mathbbm{u}=q.
$$
Unfortunately, the dual norm  $\|\cdot\|_{V_{\mp,\kappa}'}$ cannot be exactly evaluated, so that even for a finite dimensional $U^\delta$ this ideal $\mathbbm{u}^\delta$ is not computable. As explained in the next subsection, we will content ourselves with a computable approximation from $U^\delta$ that is \emph{quasi-best}.

\subsection{`Practical' method}
The minimizer $\mathbbm{u}^\delta$ from \eqref{ls} is solution of the Euler-Lagrange equations
\be \label{EL}
\langle q-B_\kappa \mathbbm{u}^\delta,B_\kappa \undertilde{\mathbbm{u}}^\delta \rangle_{V_{\mp,\kappa}'}=0 \quad (\undertilde{\mathbbm{u}}^\delta \in U^\delta).
\ee
With the Riesz isometry $R_\kappa \in \Lis(V_{\mp,\kappa}',V_{\mp,\kappa})$, let $\mathbbm{v}:=R_\kappa(q-B_\kappa \mathbbm{u}^\delta)$, i.e.,
$$
\langle \mathbbm{v}, \undertilde{\mathbbm{v}}\rangle_{V_{\mp,\kappa}}=(q-B_\kappa \mathbbm{u}^\delta)(\undertilde{\mathbbm{v}}) \quad (\undertilde{\mathbbm{v}} \in V_{\mp,\kappa}).
$$
Then an equivalent formulation of \eqref{EL} reads as
$$
0=\langle R_\kappa(q-B_\kappa \mathbbm{u}^\delta),R_\kappa B_\kappa \undertilde{\mathbbm{u}}^\delta \rangle_{V_{\mp,\kappa}}=
\langle  \mathbbm{v}, R_\kappa B_\kappa \undertilde{\mathbbm{u}}^\delta \rangle_{V_{\mp,\kappa}}=\overline{(B_\kappa \undertilde{\mathbbm{u}}^\delta)(\mathbbm{v})} \quad (\undertilde{\mathbbm{u}}^\delta \in U^\delta).
$$
Upon using \eqref{eq:optimal test norm} and $(B_\kappa \mathbbm{w})(\mathbbm{z})=\langle B_\kappa' \mathbbm{z},\mathbbm{w}\rangle_U$, we conclude that  the solution $\mathbbm{u}^\delta=(\phi^\delta,\vec{u}^\delta)$ of \eqref{ls} is the second component of $(\mathbbm{v},\mathbbm{u}^\delta) \in V_{\mp} \times U^\delta$ that solves the system
$$
\left\{\hspace*{-0.5em}
\begin{array}{lcll}
\langle B_\kappa'\mathbbm{v}, B_\kappa'\undertilde{\mathbbm{v}}\rangle_U +
\langle \mathbbm{u}^\delta, B_\kappa'\undertilde{\mathbbm{v}}\rangle_U 
& \!\!= \!\!& q(\undertilde{\mathbbm{v}}) & (\undertilde{\mathbbm{v}} \in V_{\mp}),\\
\langle B_\kappa'\mathbbm{v},\undertilde{\mathbbm{u}}^\delta\rangle_U
& \!\!=\!\! & 0 & (\undertilde{\mathbbm{u}}^\delta \in U^\delta).
\end{array}
\right.\hspace*{-1.2em}
$$%

Now consider a \new{closed subspace} $V_{\mp}^\delta \subset V_{\mp}$ for which
$$
\gamma^\delta_\kappa:=\inf_{0 \neq \undertilde{\mathbbm{u}}^\delta \in U^\delta} \sup_{0 \neq \undertilde{\mathbbm{v}}^\delta \in V_{\mp}^\delta} \frac{|
(B_\kappa \undertilde{\mathbbm{u}}^\delta)(\undertilde{\mathbbm{v}}^\delta)|}{\|\undertilde{\mathbbm{u}}^\delta\|_U\|\undertilde{\mathbbm{v}}^\delta\|_{V_{\new{\mp},\kappa}}}>0,
$$
Then finding $(\mathbbm{v}^\delta,\hat{\mathbbm{u}}^{\delta}) \in V_{\mp}^\delta \times U^\delta$ such that 
\be \label{saddle-discrete}
\left\{\hspace*{-0.5em}
\begin{array}{lcll}
\langle B_\kappa'\mathbbm{v}^\delta, B_\kappa'\undertilde{\mathbbm{v}}^\delta\rangle_U +
\langle \hat{\mathbbm{u}}^{\delta}, B_\kappa'\undertilde{\mathbbm{v}}^\delta\rangle_U 
& \!\!= \!\!& q(\undertilde{\mathbbm{v}}^\delta) & (\undertilde{\mathbbm{v}}^\delta \in V_{\mp}^\delta),\\
\langle B_\kappa'\mathbbm{v}^\delta,\undertilde{\mathbbm{u}}^\delta\rangle_U
& \!\!=\!\! & 0 & (\undertilde{\mathbbm{u}}^\delta \in U^\delta).
\end{array}
\right.\hspace*{-1.2em}
\ee
has a unique solution, \new{that, for finite dimensional $U^\delta$ and $V^\delta_{\new{\mp}}$}, is computable. For notational convenience from here on we will denote $\hat{\mathbbm{u}}^{\delta}$ again by $\mathbbm{u}^\delta=(\phi^\delta,\vec{u}^\delta)$.

The following result shows how much the error in this $\mathbbm{u}^\delta$ \emph{can}, and for some $\mathbbm{u}$ \emph{will be} larger than the error in the best approximation from $U^\delta$ to $\mathbbm{u}$.

\begin{theorem} \label{thm1} It holds that
$$
\sup_{\mathbbm{u} \in U\setminus U^\delta}
\frac{
\|\mathbbm{u}-\mathbbm{u}^\delta\|_U
}{
\inf_{\undertilde{\mathbbm{u}}^\delta \in U^\delta} \|\mathbbm{u}-\undertilde{\mathbbm{u}}^\delta\|_U
}= \frac{1}{\gamma^\delta_\kappa}.
$$
\end{theorem}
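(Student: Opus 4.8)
The plan is to recognize the quantity on the left as the reciprocal of a discrete inf-sup (or rather "quasi-optimality") constant for the saddle-point system \eqref{saddle-discrete}, and then to identify that constant explicitly with $\gamma_\kappa^\delta$. First I would record the well-known abstract fact about mixed/least-squares formulations of this type: since $B_\kappa$ is an isometry from $U$ onto $V_{\mp}'$ when $V_\mp$ carries the optimal test norm, the second block equation in \eqref{saddle-discrete} says precisely that $\mathbbm{u}^\delta$ is the $U$-orthogonal projection-like object characterized by $\langle B_\kappa'\mathbbm{v}^\delta,\undertilde{\mathbbm{u}}^\delta\rangle_U=0$ for all $\undertilde{\mathbbm{u}}^\delta\in U^\delta$, while the first block says $B_\kappa'\mathbbm{v}^\delta=P_{B_\kappa' V_\mp^\delta}(\mathbbm{u}-\mathbbm{u}^\delta)$ in $U$, where $P$ denotes $U$-orthogonal projection. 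Combining these, $\mathbbm{u}^\delta$ is the solution of a Petrov–Galerkin problem: find $\mathbbm{u}^\delta\in U^\delta$ with $\langle \mathbbm{u}-\mathbbm{u}^\delta,\,\mathbbm{z}^\delta\rangle_U=0$ for all $\mathbbm{z}^\delta\in B_\kappa' V_\mp^\delta$, i.e. $\mathbbm{u}-\mathbbm{u}^\delta\perp_U B_\kappa' V_\mp^\delta$, together with $\mathbbm{u}^\delta\in U^\delta$. Note that $\gamma_\kappa^\delta$ is exactly the inf-sup / angle constant between $U^\delta$ and $B_\kappa' V_\mp^\delta$ in $U$, since $(B_\kappa\undertilde{\mathbbm{u}}^\delta)(\undertilde{\mathbbm{v}}^\delta)=\langle\undertilde{\mathbbm{u}}^\delta, B_\kappa'\undertilde{\mathbbm{v}}^\delta\rangle_U$ and $\|\undertilde{\mathbbm{v}}^\delta\|_{V_{\mp,\kappa}}=\|B_\kappa'\undertilde{\mathbbm{v}}^\delta\|_U$.

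Next I would prove the upper bound $\|\mathbbm{u}-\mathbbm{u}^\delta\|_U\le \frac{1}{\gamma_\kappa^\delta}\inf_{\undertilde{\mathbbm{u}}^\delta}\|\mathbbm{u}-\undertilde{\mathbbm{u}}^\delta\|_U$ by the standard Babuška/Xu–Zikatanov argument for Petrov–Galerkin methods. For any $\undertilde{\mathbbm{u}}^\delta\in U^\delta$, write $\mathbbm{u}^\delta-\undertilde{\mathbbm{u}}^\delta\in U^\delta$ and use the inf-sup condition to find $\undertilde{\mathbbm{v}}^\delta\in V_\mp^\delta$ realizing (up to $\eps$) $\gamma_\kappa^\delta\|\mathbbm{u}^\delta-\undertilde{\mathbbm{u}}^\delta\|_U\le |\langle\mathbbm{u}^\delta-\undertilde{\mathbbm{u}}^\delta, B_\kappa'\undertilde{\mathbbm{v}}^\delta\rangle_U|/\|B_\kappa'\undertilde{\mathbbm{v}}^\delta\|_U$. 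Since $\mathbbm{u}-\mathbbm{u}^\delta\perp_U B_\kappa' V_\mp^\delta$, the numerator equals $|\langle\mathbbm{u}-\undertilde{\mathbbm{u}}^\delta, B_\kappa'\undertilde{\mathbbm{v}}^\delta\rangle_U|\le\|\mathbbm{u}-\undertilde{\mathbbm{u}}^\delta\|_U\|B_\kappa'\undertilde{\mathbbm{v}}^\delta\|_U$, giving $\|\mathbbm{u}^\delta-\undertilde{\mathbbm{u}}^\delta\|_U\le\frac{1}{\gamma_\kappa^\delta}\|\mathbbm{u}-\undertilde{\mathbbm{u}}^\delta\|_U$; then the triangle inequality $\|\mathbbm{u}-\mathbbm{u}^\delta\|_U\le\|\mathbbm{u}-\undertilde{\mathbbm{u}}^\delta\|_U+\|\undertilde{\mathbbm{u}}^\delta-\mathbbm{u}^\delta\|_U$ together with a (sharpened) version of the orthogonality — in fact one gets the cleaner Pythagorean-type identity $\|\mathbbm{u}-\mathbbm{u}^\delta\|_U^2=\|\mathbbm{u}-\undertilde{\mathbbm{u}}^\delta\|_U^2-\|\mathbbm{u}^\delta-\undertilde{\mathbbm{u}}^\delta\|_U^2$ when $\undertilde{\mathbbm{u}}^\delta$ is the $U$-best approximation? no — the relevant orthogonality is $\mathbbm{u}-\mathbbm{u}^\delta\perp B_\kappa'V_\mp^\delta$, not $\perp U^\delta$, so I would just use the triangle inequality to conclude $\|\mathbbm{u}-\mathbbm{u}^\delta\|_U\le(1+\tfrac{1}{\gamma_\kappa^\delta})\inf\|\cdot\|$, and then improve the constant to $\tfrac{1}{\gamma_\kappa^\delta}$ via the Xu–Zikatanov identity: $\|\mathbbm{u}-\mathbbm{u}^\delta\|_U=\|(I-\Pi)\mathbbm{u}\|_U$ where $\Pi$ is the (oblique) projection onto $U^\delta$ along $(B_\kappa'V_\mp^\delta)^{\perp_U}$, and $\|I-\Pi\|_{\mathcal L(U)}=\|\Pi\|_{\mathcal L(U)}=1/\gamma_\kappa^\delta$ for a nontrivial projection.

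For the matching lower bound (that the supremum is actually attained, so that "$=$" holds, not just "$\le$"), I would exhibit a worst-case $\mathbbm{u}$. Take a near-minimizer of the operator-norm expression $\|\Pi\|_{\mathcal L(U)}$: choose $\mathbbm{u}$ of the form $\mathbbm{u}^\delta_\ast + \mathbbm{w}$ with $\mathbbm{u}^\delta_\ast\in U^\delta$ a unit vector almost realizing $\|\Pi\|$ and $\mathbbm{w}$ a suitable element of the "orthogonal complement along which we project" scaled so that $\inf_{\undertilde{\mathbbm{u}}^\delta}\|\mathbbm{u}-\undertilde{\mathbbm{u}}^\delta\|_U$ is controlled while $\|\mathbbm{u}-\mathbbm{u}^\delta\|_U/\inf\to 1/\gamma_\kappa^\delta$; concretely, using finite-dimensionality of $U^\delta$ pick unit $\undertilde{\mathbbm{u}}^\delta_0\in U^\delta$ and unit $\undertilde{\mathbbm{v}}^\delta_0\in V_\mp^\delta$ achieving the inf-sup infimum, set $\mathbbm{z}_0:=B_\kappa'\undertilde{\mathbbm{v}}^\delta_0/\|B_\kappa'\undertilde{\mathbbm{v}}^\delta_0\|_U$, decompose $\undertilde{\mathbbm{u}}^\delta_0=(\cos\theta)\mathbbm{z}_0+(\sin\theta)\mathbbm{z}_0^\perp$ with $\cos\theta=\gamma_\kappa^\delta$ (this is precisely what the infimum being $\gamma_\kappa^\delta$ means after one checks $\mathbbm{z}_0$ is the best $B_\kappa'V_\mp^\delta$-direction for $\undertilde{\mathbbm{u}}^\delta_0$), and then take $\mathbbm{u}:=\mathbbm{z}_0^\perp$: its oblique-projection image is a multiple of $\undertilde{\mathbbm{u}}^\delta_0$ of size $1/\tan\theta\cdot(\cdots)$ — I would compute this elementary 2D geometry to see the ratio equals $1/\gamma_\kappa^\delta$ exactly, and also verify $\mathbbm{u}\notin U^\delta$ so it lies in the supremum's index set.

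The main obstacle I anticipate is purely bookkeeping: carefully pinning down that $\gamma_\kappa^\delta$, defined as a double inf-sup over $U^\delta$ and $V_\mp^\delta$, coincides with the single "gap/angle" constant $\min_{0\neq\undertilde{\mathbbm{u}}^\delta\in U^\delta}\cos\angle(\undertilde{\mathbbm{u}}^\delta, B_\kappa'V_\mp^\delta)$, and that $\mathbbm{u}^\delta$ is exactly the oblique projection of $\mathbbm{u}$ onto $U^\delta$ along $(B_\kappa'V_\mp^\delta)^{\perp_U}$ — both require unwinding \eqref{saddle-discrete} and using that $B_\kappa'$ is injective on $V_\mp$ so $B_\kappa'V_\mp^\delta$ is a genuine subspace of the right dimension. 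Once that identification is in place, the equality reduces to the textbook fact $\|I-\Pi\| = \|\Pi\| = 1/\gamma_\kappa^\delta$ for a nonzero idempotent $\Pi$ on a Hilbert space, for which both the upper estimate and the norm-attaining example are standard (cf. Kato, or the Xu–Zikatanov lemma); I would cite or reprove the attainment in the two-dimensional model and note finite-dimensionality of $U^\delta$ is what guarantees the supremum is a maximum.
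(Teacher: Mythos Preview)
Your identification of the Petrov--Galerkin orthogonality is wrong, and this is where the argument breaks down. From the first row of \eqref{saddle-discrete} you correctly get $B_\kappa'\mathbbm{v}^\delta = P_{B_\kappa' V_\mp^\delta}(\mathbbm{u}-\mathbbm{u}^\delta)$, and from the second row $B_\kappa'\mathbbm{v}^\delta \perp_U U^\delta$. Combining these gives
\[
P_{B_\kappa' V_\mp^\delta}(\mathbbm{u}-\mathbbm{u}^\delta)\ \perp_U\ U^\delta,
\qquad\text{i.e.}\qquad
\mathbbm{u}-\mathbbm{u}^\delta\ \perp_U\ P_{B_\kappa' V_\mp^\delta} U^\delta,
\]
which is \emph{not} the same as $\mathbbm{u}-\mathbbm{u}^\delta \perp_U B_\kappa' V_\mp^\delta$. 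Your claimed orthogonality would force $B_\kappa'\mathbbm{v}^\delta=0$ for every $\mathbbm{u}$, which is false whenever $V_\mp^\delta$ is strictly larger than needed. Since $\dim B_\kappa' V_\mp^\delta > \dim U^\delta$ in the relevant situations, testing against all of $B_\kappa' V_\mp^\delta$ is overdetermined and cannot characterize $\mathbbm{u}^\delta$.

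The correct Petrov--Galerkin test space is therefore the \emph{subspace} $P_{B_\kappa' V_\mp^\delta} U^\delta \subset B_\kappa' V_\mp^\delta$ (equivalently, in $V_\mp^\delta$, the optimal test space $\ran (G_\kappa^\delta)^{-1}B_\kappa^\delta$ that the paper uses). Once you make this correction, a second step becomes necessary which your proposal misses entirely: you must show that the inf--sup constant between $U^\delta$ and this \emph{smaller} test space is still exactly $\gamma_\kappa^\delta$. This is the heart of the paper's proof; it follows because for each $\undertilde{\mathbbm{u}}^\delta\in U^\delta$ the supremum over $\mathbbm{z}\in B_\kappa' V_\mp^\delta$ of $|\langle\undertilde{\mathbbm{u}}^\delta,\mathbbm{z}\rangle_U|/\|\mathbbm{z}\|_U$ is attained at $\mathbbm{z}=P_{B_\kappa' V_\mp^\delta}\undertilde{\mathbbm{u}}^\delta$, which lies in the smaller space. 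With the correct test space and this invariance of $\gamma_\kappa^\delta$ in hand, your Xu--Zikatanov/Kato reasoning ($\|\identity-\Pi\|=\|\Pi\|=1/\gamma_\kappa^\delta$ for the oblique projector, together with attainment) goes through and matches the paper's concluding appeal to \cite{249.99}.
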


\begin{proof} \new{With $B_\kappa^\delta\colon U^\delta \rightarrow {V_{\mp}^\delta}'$ defined by
$(B_\kappa^\delta \undertilde{\mathbbm{u}}^\delta)(\undertilde{\mathbbm{v}}^\delta)=\langle \undertilde{\mathbbm{u}}^\delta, B_\kappa'\undertilde{\mathbbm{v}}^\delta \rangle_U$, and
$G_\kappa^\delta \colon V_{\mp}^\delta \rightarrow {V_{\mp}^\delta}'$ defined by $(G_\kappa^\delta  \mathbbm{v}^\delta)(\undertilde{\mathbbm{v}}^\delta)=\langle B_\kappa'\mathbbm{v}^\delta, B_\kappa'\undertilde{\mathbbm{v}}^\delta\rangle_U$}, \eqref{saddle-discrete} reads as the system 
$\new{G_\kappa^\delta} \mathbbm{v}^\delta+B_\kappa^\delta \mathbbm{u}^\delta=q|_{V_{\mp}^\delta}$, ${B_\kappa^\delta}' \mathbbm{v}^\delta=0$.
So $\mathbbm{u}^\delta$ is the unique solution of the Schur complement ${B_\kappa^\delta}' (\new{G_\kappa^\delta})^{-1} (B_\kappa^\delta \mathbbm{u}^\delta-q|_{V_{\mp}^\delta})=0$, or equivalently, of the Petrov-Galerkin discretisation
$$
(B_\kappa \new{\mathbbm{u}^\delta})(\undertilde{\mathbbm{v}}^\delta )=q(\undertilde{\mathbbm{v}}^\delta ) \quad (\undertilde{\mathbbm{v}}^\delta \in \ran (\new{G_\kappa^\delta})^{-1}B_\kappa^\delta).
$$

Let $\undertilde{\mathbbm{z}}^\delta \in (\ran (\new{G_\kappa^\delta})^{-1}B_\kappa^\delta)^{\perp_{\langle \cdot,\cdot \rangle_{V_{\new{\mp},\kappa}}}} \subset V_{\mp}^\delta$ and $\undertilde{\mathbbm{u}}^\delta \in U^\delta$. 
With $\mathbbm{y}^\delta:=(\new{G_\kappa^\delta})^{-1}B_\kappa^\delta\undertilde{\mathbbm{u}}^\delta$, we find
$$
(B_\kappa \undertilde{\mathbbm{u}}^\delta)( \undertilde{\mathbbm{z}}^\delta)= (\new{G_\kappa^\delta} \mathbbm{y}^\delta)( \undertilde{\mathbbm{z}}^\delta)=
\langle {B_\kappa^\delta}' \mathbbm{y}^\delta,{B_\kappa^\delta}' \undertilde{\mathbbm{z}}^\delta\rangle_U=
\langle \mathbbm{y}^\delta, \undertilde{\mathbbm{z}}^\delta\rangle_{V_{\mp},\kappa}=0.
$$
We infer that consequently
 the value of inf-sup constant $\gamma^\delta_\kappa$ remains unchanged when in its definition the space $V_{\mp}^\delta$ is replaced by its subspace $\ran (\new{G_\kappa^\delta})^{-1}B_\kappa^\delta$. 
Since furthermore $B_\kappa \in \Lis(U,V_{\mp}')$ is an isometry, an application of \cite[Remark 3.2]{249.99} concerning the error in Petrov-Galerkin discretizations 
completes the proof.
\end{proof}

\begin{remark} \label{remmie} One easily infers that $\mathbbm{u} \mapsto  \mathbbm{u}^\delta$ is a projector onto $U^\delta$ with $\|\mathbbm{u} \mapsto  \mathbbm{u}^\delta\|_{\cL(U,U)} \leq \frac{1}{\gamma^\delta_\kappa}$. Even $\|\mathbbm{u} \mapsto  \mathbbm{u}^\delta\|_{\cL(U,U)} = \frac{1}{\gamma^\delta_\kappa}$ is valid (see \cite[Thm.~3.1]{249.99}).
\end{remark}

\begin{remark}
In view of the equivalent definition of $\mathbbm{u}^\delta$ as the solution of a Petrov-Galerkin discretisation, we refer to $U^\delta$ as the \emph{trial space}, and $V_{\mp}^\delta$ as the \emph{test space} (actually its subspace $\new{\ran (G_\kappa^\delta)^{-1}B_\kappa^\delta}$ is the test space in the Petrov-Galerkin discretisation, and therefore in the literature $V_{\mp}^\delta$ is  sometimes called the test search space).
\end{remark}

\begin{remark}  Besides being (a component of) the solution of a saddle point system, and being a Petrov-Galerkin solution, a third equivalent definition of $\mathbbm{u}^\delta$ is that by
$$
\mathbbm{u}^\delta:=\argmin_{\mathbbm{w}^\delta \in U^\delta} \sup_{0 \neq \mathbbm{z}^\delta \in V^\delta_{\mp}} \frac{|(q- B_\kappa \mathbbm{w}^\delta)(\mathbbm{z}^\delta)|}{\|\mathbbm{z}^\delta\|_{V_{\mp,\kappa}}},
$$ 
so that it is appropriate to call $\mathbbm{u}^\delta$ the solution of a minimal residual or least squares discretisation.
\end{remark}

We will refer to the `practical' method discussed in this subsection as the \emph{ultra-weak FOSLS}, or shortly, \emph{FOSLS} method or discretisation, noting that its solution not only depends on $U^\delta$ but also on $V^\delta_{\mp}$. \medskip

In view of Theorem~\ref{thm1} our goal is to find $V^\delta_{\mp}=V^\delta_{\mp}(U^\delta)$, preferably with $\dim V^\delta_{\mp} \lesssim \dim U^\delta$,
such that inf-sup stability $\gamma_\kappa^\delta \gtrsim 1$ is valid \emph{uniformly} in $U^\delta$ and $\kappa$, so that $\mathbbm{u}^\delta$ is a quasi-best approximation from $U^\delta$.

To show uniform inf-sup stability, one can use the characterization of $\gamma_\kappa^\delta$ as being the reciprocal of the $\cL\big((V_{\mp},\|\cdot\|_{V_{\mp},\kappa}), (V_{\mp},\|\cdot\|_{V_{\mp},\kappa})\big)$-norm of the Fortin interpolator\footnote{A linear mapping $\Pi^\delta\colon V_{\mp} \rightarrow V_{\mp}^\delta$ with $(B_\kappa U^\delta)(\ran (\identity-\Pi^\delta))=0$.} with smallest norm.
\new{For our main result presented in Sect.~\ref{sec:practical},} we will rely on an alternative expression for $\gamma_\kappa^\delta$ in terms of approximability from $V_{\mp}^\delta$ of the solution of an adjoint problem with a forcing function from $U^\delta$ which we recall next.

\begin{proposition}[{\cite[Proposition 2.5]{35.8565}}] \label{proppie} It holds that
\be \label{71}
\new{\sup_{0 \neq \mathbbm{w}^\delta \in U^\delta}
\frac{
\inf_{\mathbbm{v}^\delta \in V_\mp^\delta} \|B_\kappa'\big[{B_\kappa'}^{-1} \mathbbm{w}^\delta -\mathbbm{v}^\delta\big]\|_U^2
}
{
\|\mathbbm{w}^\delta\|_U^2
}
=1-(\gamma_k^\delta)^2.}
\ee
\end{proposition}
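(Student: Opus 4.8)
The plan is to identify $\gamma_\kappa^\delta$ with a quantity involving orthogonal projection in the $V_{\mp,\kappa}$-inner product, and then recognize the left-hand side of \eqref{71} as the squared norm of the complementary projection. First I would exploit the isometry property of $B_\kappa'\colon (V_\mp,\|\cdot\|_{V_{\mp,\kappa}})\to U$ established in Section~\ref{sec:ultra-weakFOSLS}: under this norm, ${B_\kappa'}^{-1}$ is an isometric isomorphism from $U$ onto $(V_\mp,\|\cdot\|_{V_{\mp,\kappa}})$. Consequently, for any $\mathbbm{w}^\delta\in U^\delta$ the element ${B_\kappa'}^{-1}\mathbbm{w}^\delta\in V_\mp$ is exactly the Riesz-type representative of the adjoint problem with forcing $\mathbbm{w}^\delta$, and $\inf_{\mathbbm{v}^\delta\in V_\mp^\delta}\|B_\kappa'[{B_\kappa'}^{-1}\mathbbm{w}^\delta-\mathbbm{v}^\delta]\|_U = \|{B_\kappa'}^{-1}\mathbbm{w}^\delta - P^\delta {B_\kappa'}^{-1}\mathbbm{w}^\delta\|_{V_{\mp,\kappa}}$, where $P^\delta$ is the $\langle\cdot,\cdot\rangle_{V_{\mp,\kappa}}$-orthogonal projector onto $V_\mp^\delta$.

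Next I would rewrite $\gamma_\kappa^\delta$ itself in these terms. In the defining supremum for $\gamma_\kappa^\delta$, write $(B_\kappa\undertilde{\mathbbm{u}}^\delta)(\undertilde{\mathbbm{v}}^\delta)=\langle\undertilde{\mathbbm{u}}^\delta,B_\kappa'\undertilde{\mathbbm{v}}^\delta\rangle_U=\langle {B_\kappa'}^{-1}\undertilde{\mathbbm{u}}^\delta,\undertilde{\mathbbm{v}}^\delta\rangle_{V_{\mp,\kappa}}$, using the isometry to move the inner product. Hence the inner supremum over $\undertilde{\mathbbm{v}}^\delta\in V_\mp^\delta$ of $|\langle {B_\kappa'}^{-1}\undertilde{\mathbbm{u}}^\delta,\undertilde{\mathbbm{v}}^\delta\rangle_{V_{\mp,\kappa}}|/\|\undertilde{\mathbbm{v}}^\delta\|_{V_{\mp,\kappa}}$ equals $\|P^\delta {B_\kappa'}^{-1}\undertilde{\mathbbm{u}}^\delta\|_{V_{\mp,\kappa}}$, while $\|\undertilde{\mathbbm{u}}^\delta\|_U=\|{B_\kappa'}^{-1}\undertilde{\mathbbm{u}}^\delta\|_{V_{\mp,\kappa}}$ again by the isometry. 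Therefore
\[
\gamma_\kappa^\delta=\inf_{0\neq\undertilde{\mathbbm{u}}^\delta\in U^\delta}\frac{\|P^\delta {B_\kappa'}^{-1}\undertilde{\mathbbm{u}}^\delta\|_{V_{\mp,\kappa}}}{\|{B_\kappa'}^{-1}\undertilde{\mathbbm{u}}^\delta\|_{V_{\mp,\kappa}}}.
\]

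Finally, by the Pythagorean identity $\|\mathbbm{z}\|_{V_{\mp,\kappa}}^2=\|P^\delta\mathbbm{z}\|_{V_{\mp,\kappa}}^2+\|(\identity-P^\delta)\mathbbm{z}\|_{V_{\mp,\kappa}}^2$ applied to $\mathbbm{z}={B_\kappa'}^{-1}\undertilde{\mathbbm{u}}^\delta$, the ratio $\|(\identity-P^\delta){B_\kappa'}^{-1}\undertilde{\mathbbm{u}}^\delta\|_{V_{\mp,\kappa}}^2/\|{B_\kappa'}^{-1}\undertilde{\mathbbm{u}}^\delta\|_{V_{\mp,\kappa}}^2$ equals $1$ minus the squared ratio above, so taking the supremum over $\undertilde{\mathbbm{u}}^\delta$ turns the infimum defining $\gamma_\kappa^\delta$ into $1-(\gamma_\kappa^\delta)^2$. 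Using the isometry one last time to translate $\|(\identity-P^\delta){B_\kappa'}^{-1}\mathbbm{w}^\delta\|_{V_{\mp,\kappa}}$ back into $\inf_{\mathbbm{v}^\delta\in V_\mp^\delta}\|B_\kappa'[{B_\kappa'}^{-1}\mathbbm{w}^\delta-\mathbbm{v}^\delta]\|_U$ and $\|{B_\kappa'}^{-1}\mathbbm{w}^\delta\|_{V_{\mp,\kappa}}$ into $\|\mathbbm{w}^\delta\|_U$ yields \eqref{71}. The only mild subtlety—hardly an obstacle—is the bookkeeping that the orthogonal-projection characterization of the inner supremum is valid also when $V_\mp^\delta$ is merely a closed (not necessarily finite-dimensional) subspace, which is immediate since $P^\delta$ still exists; everything else is a direct transcription through the isometry. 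Since this is a restatement of \cite[Proposition 2.5]{35.8565}, I would simply refer to that source, or include the short argument above for completeness.
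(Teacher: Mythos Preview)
Your proof is correct and follows essentially the same route as the paper's: both introduce the $\langle\cdot,\cdot\rangle_{V_{\mp,\kappa}}$-orthogonal projector $P^\delta$ onto $V_\mp^\delta$, identify $\gamma_\kappa^\delta$ with $\inf_{0\neq\mathbbm{w}^\delta\in U^\delta}\|P^\delta{B_\kappa'}^{-1}\mathbbm{w}^\delta\|_{V_{\mp,\kappa}}/\|\mathbbm{w}^\delta\|_U$ via the Riesz/isometry relation $\langle\mathbbm{w}^\delta,B_\kappa'\mathbbm{v}^\delta\rangle_U=\langle{B_\kappa'}^{-1}\mathbbm{w}^\delta,\mathbbm{v}^\delta\rangle_{V_{\mp,\kappa}}$, and then apply the Pythagorean identity to pass from the infimum of the projected part to the supremum of the complementary part. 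Your presentation emphasizes the isometry viewpoint a bit more explicitly, but the argument is the same.
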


\begin{proof} \new{Let $P^\delta\colon V_\mp \rightarrow V_\mp^\delta$ be the $\langle\cdot,\cdot\rangle_{V_\mp,\kappa}$-orthogonal projector onto $V_{\mp}^\delta$.}
From
\begin{align*}
&\inf_{0 \neq \mathbbm{w}^\delta \in U^\delta} \frac{\|P^\delta {B_\kappa'}^{-1} \mathbbm{w}^\delta\|_{V_{\mp,\kappa}}}{\| \mathbbm{w}^\delta\|_U}
=
\inf_{0 \neq \mathbbm{w}^\delta \in U^\delta} \sup_{0 \neq \mathbbm{v}^\delta \in V_{\mp}^\delta} \frac{|\langle P^\delta {B_\kappa'}^{-1} \mathbbm{w}^\delta,\mathbbm{v}^\delta\rangle_{V_{\mp,\kappa}}|}{\| \mathbbm{w}^\delta\|_U\|\mathbbm{v}^\delta\|_{V_{\mp,\kappa}}}=\\
&\inf_{0 \neq \mathbbm{w}^\delta \in U^\delta} \sup_{0 \neq\mathbbm{v}^\delta \in V_{\mp}^\delta} \frac{|\langle  {B_\kappa'}^{-1} \mathbbm{w}^\delta,\mathbbm{v}^\delta\rangle_{V_{\mp,\kappa}}|}{\| \mathbbm{w}^\delta\|_U\|\mathbbm{v}^\delta\|_{V_{\mp,\kappa}}}=
\inf_{0 \neq \mathbbm{w}^\delta \in U^\delta} \sup_{0 \neq\mathbbm{v}^\delta \in V_{\mp}^\delta}
\frac{|\langle \mathbbm{w}^\delta,B_\kappa' \mathbbm{v}^\delta\rangle_U|}{\| \mathbbm{w}^\delta\|_U \|\mathbbm{v}^\delta\|_{V_{\mp,\kappa}}}=\gamma_\kappa^\delta,
\end{align*}
we obtain that
\be \label{70}
\begin{split}
&\sup_{0 \neq \mathbbm{w}^\delta \in U^\delta}\frac{\|(\identity -P^\delta){B_\kappa'}^{-1} \mathbbm{w}^\delta\|^2_{V_{\mp,\kappa}}}{\|\mathbbm{w}^\delta\|_U^2}=
\sup_{0 \neq \mathbbm{w}^\delta \in U^\delta}
\frac{\|{B_\kappa'}^{-1} \mathbbm{w}^\delta\|^2_{V_{\mp,\kappa}}-\|P^\delta{B_\kappa'}^{-1} \mathbbm{w}^\delta\|^2_{V_{\mp,\kappa}}
}{\|\mathbbm{w}^\delta\|_U^2}\\
&=
1-\inf_{0 \neq \mathbbm{w}^\delta \in U^\delta}
\frac{\|P^\delta{B_\kappa'}^{-1} \mathbbm{w}^\delta\|^2_{V_{\mp,\kappa}}
}{\|\mathbbm{w}^\delta\|_U^2} = 1-(\gamma^\delta_\kappa)^2.
\end{split}
\ee
\new{Noticing that the first expression in \eqref{70} equals the left-hand side in \eqref{71}, the proof is completed.}
\end{proof}

\section{Verification of uniform inf-sup stability} \label{sec:infsupstab}

\subsection{The case of $\kappa h_{\delta,\min}$  sufficiently large}
As we will see, for $U^\delta$ being a finite element space of fixed polynomial degree, it is fairly easy to select $V^\delta_{\mp}$ with $\dim V^\delta_{\mp} \lesssim \dim U^\delta$
that gives inf-sup stability, and so quasi-optimality of our FOSLS discretisation, assuming that the product of $\kappa$ and the minimal mesh-size $h_{\delta,\min}$ is sufficiently large. In this regime standard Galerkin discretisations are known not to be quasi-optimal.

\new{Notice, however, that physically relevant solutions of the Helmholtz problem generally exhibit globally oscillations with wavelength $\sim \frac{2\pi}{\kappa}$, so that from an approximation point of view, the case of $\kappa h_{\delta,\min}$ being sufficiently large is not very relevant.
The reasons to nevertheless study this case are two-fold.
First, mathematically it is possible that the Helmholtz solution is smooth, so that meaningful approximations do exist on coarse meshes.
Second, restricting ourselves to quasi-uniform meshes, in Sect.~\ref{sec:practical} we show quasi-optimality in the most relevant case of $\kappa h_{\delta}$ being uniformly bounded.
Together with the result of the current subsection, we conclude quasi-optimality of the FOSLS solution 
on \emph{any} quasi-uniform mesh and $\kappa \geq \kappa_0$.}

\begin{theorem} \label{thm:coarse} For a conforming subdivision $\tria^\delta$ of $\Omega$ into $d$-simplices, let
$$
h_{\delta,\min}:=\min_{K \in \tria^\delta} |K|^{1/d}.
$$
For $p \in \N_0$, with $\cS_p^{-1}(\tria^\delta):=\{v \in L_2(\Omega)\colon v|_K \in P_p(K)\,\,(K \in \tria^{\new{\delta}})\}$, and
$\cS_p^{0}(\tria^\delta):=\cS^{-1}(\tria^{\new{\delta}}) \cap C(\overline{\Omega})$, let
$$
U^\delta:=\cS_p^{-1}(\tria^\delta)^{d+1},\quad
V_{\mp}^\delta:=(\cS^0_{p+d+1}(\tria^\delta) \times \RT_{p+1}(\tria^\delta)) \cap V_{\mp}.
$$
Then for $\kappa h_{\delta,\min}$ sufficiently large, it holds that
$\gamma_\kappa^\delta \gtrsim 1$, only dependent on $d$, the shape regularity of $\tria^\delta$, and here also on $p$.\footnote{\new{Both the lower bound on $\kappa h_{\delta,\min}$ and the constant hidden in the inequality $\gamma_\kappa^\delta \gtrsim 1$ may depend on $p$. For this less relevant unresolved regime, we have not made an attempt to construct $V_{\mp}^\delta$ that gives results that hold uniformly in $p$.}}
\end{theorem}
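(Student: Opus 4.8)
The plan is to use Proposition~\ref{proppie}, which reduces uniform inf-sup stability $\gamma_\kappa^\delta \gtrsim 1$ to showing that, for every $\mathbbm{w}^\delta \in U^\delta$, the function $\mathbbm{v}:={B_\kappa'}^{-1}\mathbbm{w}^\delta \in V_\mp$ can be approximated from $V_\mp^\delta$ in the $\|\cdot\|_{V_\mp,\kappa}$-norm to within a factor strictly less than $1$ times $\|\mathbbm{w}^\delta\|_U$. More precisely, it suffices to exhibit, for each such $\mathbbm{w}^\delta$, a $\mathbbm{v}^\delta \in V_\mp^\delta$ with $\|B_\kappa'(\mathbbm{v}-\mathbbm{v}^\delta)\|_U \leq c\,\|\mathbbm{w}^\delta\|_U$ for a fixed $c<1$. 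By Lemma~\ref{lem:easy}, writing $\mathbbm{w}^\delta=(h_1,\vec h_2)$, the function $\mathbbm{v}=(\eta,\tfrac1\kappa\nabla\eta-\vec h_2)$ where $\eta\in H^1_{0,\Gamma_D}(\Omega)$ solves the adjoint second-order Helmholtz problem $L_\kappa'\eta = h_1-\tfrac1\kappa\divvv\vec h_2$ — but note $\vec h_2\in\cS_p^{-1}(\tria^\delta)^d$ need not lie in $H_{0,\Gamma_N\cup\Gamma_R}(\divv;\Omega)$, so one first has to split off the piecewise-polynomial (hence $H(\divv)$-conforming only up to a correction) part; alternatively one bounds directly using $B_\kappa'$.

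The first main step is to construct the approximant $\mathbbm{v}^\delta=(\eta^\delta,\vec y^\delta)$. For the scalar component I would take $\eta^\delta\in\cS^0_{p+d+1}(\tria^\delta)$ to be a quasi-interpolant of $\eta$; for the flux component I would set $\vec y^\delta$ to be an $\RT_{p+1}(\tria^\delta)$-approximation of $\tfrac1\kappa\nabla\eta^\delta - \vec h_2$ (note $\vec h_2\in\cS_p^{-1}(\tria^\delta)^d\subset\RT_{p+1}(\tria^\delta)$ exactly, and $\nabla\eta^\delta\in\cS^{-1}_{p+d}(\tria^\delta)^d\subset\RT_{p+1}(\tria^\delta)$ as well, since $\RT_{p+1}$ contains all polynomials of degree $\leq p+1$ componentwise — so in fact $\tfrac1\kappa\nabla\eta^\delta-\vec h_2\in\RT_{p+1}(\tria^\delta)$ with no further approximation needed). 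One then checks $(\eta^\delta,\vec y^\delta)\in V_\mp^\delta$: the membership constraint $\int_{\partial\Omega}\vec y^\delta\cdot\vec n\,\overline\psi\,ds \mp i\int_{\Gamma_R}\eta^\delta\overline\psi\,ds=0$ has to be arranged, which will force a modification of $\eta^\delta$ (or $\vec y^\delta$) near $\Gamma_R$ — most cleanly by defining $\vec y^\delta$ through a discrete analogue of the construction in Lemma~\ref{lem1}, i.e.\ $\vec y^\delta=\nabla u^\delta$ with $u^\delta$ a finite element solution of a coercive Helmholtz-type problem with data $\eta^\delta$, plus the piecewise-polynomial divergence-free correction.

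The second main step is the error estimate. Here $B_\kappa'(\mathbbm{v}-\mathbbm{v}^\delta) = (-\tfrac1\kappa\divvv(\vec v-\vec y^\delta)-(\eta-\eta^\delta),\ \tfrac1\kappa\nabla(\eta-\eta^\delta)-(\vec v-\vec y^\delta))$, and since the flux part of $\mathbbm{v}$ is $\tfrac1\kappa\nabla\eta-\vec h_2$ while $\vec y^\delta$ was built to track $\tfrac1\kappa\nabla\eta^\delta-\vec h_2$, the second component reduces essentially to $\tfrac1\kappa\nabla(\eta-\eta^\delta)$ minus a correction term, and the first component involves $\tfrac1\kappa\divvv$ of the same quantities. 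The point of taking $\kappa h_{\delta,\min}$ \emph{large} is that the natural estimate is of the form $\|B_\kappa'(\mathbbm{v}-\mathbbm{v}^\delta)\|_U \lesssim (\kappa h_\delta)^{-1}\|\mathbbm{w}^\delta\|_U + (\text{boundary correction})$: interpolation error on a simplex $K$ of a function in $H^2(K)$ (the adjoint solution $\eta$ has $H^2$-regularity locally since $\Delta\eta = -\kappa^2\eta - \kappa(h_1-\tfrac1\kappa\divvv\vec h_2)\in L_2(K)$ away from the skeleton) costs a factor $h_K$, and dividing by $\kappa$ in $B_\kappa'$ gives the gain $(\kappa h_\delta)^{-1}$, while the $H^2$-seminorm of $\eta$ is controlled by $\|\mathbbm{w}^\delta\|_U$ using the elliptic shift estimate $\|\eta\|_{H^2}\lesssim \kappa^2\|\eta\|_{L_2}+\|h_1-\tfrac1\kappa\divvv\vec h_2\|_{L_2}$ combined with $\|\eta\|_{L_2}\lesssim \kappa^{-1}\|\mathbbm{w}^\delta\|_U$ (this last bound is where one must be careful — it is \emph{not} uniform in $\kappa$ in general, but for the adjoint problem with $L_2$-type data and in the regime considered it can be obtained, possibly with an extra factor that is still beaten by the smallness of $(\kappa h_{\delta,\min})^{-1}$). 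The boundary-correction term from enforcing membership in $V_\mp$ is supported in a layer of width $\sim h_\delta$ near $\Gamma_R$ and is similarly small.

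I expect the main obstacle to be controlling the boundary correction needed to land in $V_\mp^\delta$ \emph{without} spoiling the $(\kappa h_\delta)^{-1}$ gain — in particular showing that the discrete lifting realizing the constraint $\int_{\partial\Omega}\vec y^\delta\cdot\vec n\,\overline\psi\,ds\mp i\int_{\Gamma_R}\eta^\delta\overline\psi\,ds=0$ can be chosen with $\|B_\kappa'(\text{correction})\|_U$ bounded by $C(\kappa h_\delta)^{-1}\|\mathbbm{w}^\delta\|_U$ or at least by a constant $<1$ for $\kappa h_{\delta,\min}$ large. A secondary technical point is that the stated raise in polynomial degree from $p$ (trial) to $p+d+1$ and $p+1$ (test) must be exactly what makes $\tfrac1\kappa\nabla\eta^\delta-\vec h_2$ sit inside $\RT_{p+1}$ and makes the Fortin-type commuting property go through; getting these degrees to match is bookkeeping but has to be done precisely. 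Since the theorem explicitly allows all constants (and the threshold on $\kappa h_{\delta,\min}$) to depend on $p$, there is no need to track $p$-dependence, which removes what would otherwise be the hardest part.
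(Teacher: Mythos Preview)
Your approach via Proposition~\ref{proppie} and the continuous adjoint solution is not the route the paper takes, and as written it has genuine gaps.

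The paper's argument is direct and avoids the adjoint problem entirely. The key observation is that $B_\kappa'(\eta,\vec v)=-(\eta,\vec v)+\tfrac1\kappa(-\divv\vec v,\nabla\eta)$, so on any finite-dimensional subspace an inverse inequality gives $\|B_\kappa'(\eta,\vec v)+(\eta,\vec v)\|_U\lesssim(\kappa h_{\delta,\min})^{-1}\|(\eta,\vec v)\|_U$. Hence for $\kappa h_{\delta,\min}$ large, $B_\kappa'$ is close to $-\identity$ and inf-sup for $\langle\cdot,B_\kappa'\cdot\rangle_U$ reduces to $L_2$-duality. The latter is obtained elementwise on a \emph{bubble} subspace $\hat V^\delta\subset V_\mp^\delta$ of functions vanishing on every element boundary: on each $K$, $P_{p+d+1}(K)\cap H^1_0(K)=b_K P_p(K)$ is $L_2(K)$-dual to $P_p(K)$, and $\RT_{p+1}(K)\cap H_0(\divv;K)$ is $L_2(K)^d$-dual to $P_p(K)^d$ (the latter because its interior DoFs are precisely the moments against $P_p(K)^d$). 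Since bubble functions have zero traces, $\hat V^\delta\subset V_\mp$ automatically, and the boundary constraint you flag as the ``main obstacle'' never arises.

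Your plan has two concrete failures. First, the degree bookkeeping is wrong: $\eta^\delta\in\cS^0_{p+d+1}$ gives $\nabla\eta^\delta$ piecewise of degree $p+d$, which for $d\geq 2$ does \emph{not} lie in $\RT_{p+1}$ (the latter contains only $P_{p+1}^d$ plus the homogeneous $\vec x$-part), so your claimed exact inclusion $\tfrac1\kappa\nabla\eta^\delta-\vec h_2\in\RT_{p+1}(\tria^\delta)$ is false. Second, and more fundamentally, your error estimate rests on controlling the adjoint solution $\eta=(L_\kappa')^{-1}(h_1-\tfrac1\kappa\divv\vec h_2)$: you need $\|\eta\|_{L_2}$ and $\|\eta\|_{H^2}$ bounded by $\|\mathbbm w^\delta\|_U$ times a factor that is beaten by $(\kappa h_{\delta,\min})^{-1}$. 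You yourself concede this ``is not uniform in $\kappa$ in general''; indeed the stability constant of $(L_\kappa')^{-1}$ grows with $\kappa$ (polynomially even on nontrapping domains, possibly worse), and Theorem~\ref{thm:coarse} imposes no geometric hypothesis that would control it. A further complication you note but do not resolve is that $\vec h_2\in\cS_p^{-1}(\tria^\delta)^d$ is discontinuous, so $\divv\vec h_2\notin L_2(\Omega)$ and Lemma~\ref{lem:easy} does not apply as stated. The paper's proof sidesteps all of this by never invoking $(L_\kappa')^{-1}$; the regime $\kappa h_{\delta,\min}$ large is precisely where the purely algebraic bubble argument works and the adjoint-regularity machinery (reserved for Theorem~\ref{thm:fine}) does not.
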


\begin{proof} For a $d$-simplex $K$, and with $b_K\colon K \rightarrow \R_+$ being the product of its $d+1$ barycentric coordinates, it holds that $P_{p+d+1}(K) \cap H^1_0(K)=b_K P_p(K)$.
For $\new{\phi} \in P_p(K)$, it holds that $\int_K b_K |\new{\phi}|^2\,dx \gtrsim \int_K |\new{\phi}|^2\,dx \gtrsim \int_K b^2_K |\new{\phi}|^2\,dx$, which shows that
\be \label{81}
\inf_{0\neq \phi \in  P_p(K)} \sup_{0 \neq \eta \in P_{p+d+1}(K) \cap H^1_0(K)} \frac{\Re \langle \phi,\eta\rangle_{L_2(K)}}{\|\phi\|_{L_2(K)}\|\eta\|_{L_2(K)}}\gtrsim 1,
\ee
independent of $K$.

Let $\hat{K}$ some fixed reference $d$-simplex. Since an $\new{\vec{v}} \in \RT_{p+1}(\hat{K}) \cap H_0(\divv;\hat{K})$ is uniquely determined by the DoFs 
$\new{\vec{u}} \mapsto \int_{\hat{K}} \new{\vec{v}} \cdot \new{\vec{u}}\,dx$ ($\new{\vec{u}} \in P_{\new{p}}(\hat{K})^{\new{d}}$), it holds that
\be \label{82}
 \inf_{0 \neq \new{\vec{u}} \in P_{p}(\hat{K})^d} \sup_{0\neq \new{\vec{v}} \in \RT_{p+1}(\hat{K}) \cap H_0(\divv;\hat{K})} \frac{\Re \langle \new{\vec{u}},\new{\vec{v}}\rangle_{L_2(\hat{K})^d}}{\|\new{\vec{u}}\|_{L_2(\hat{K})^d}\|\new{\vec{v}}\|_{L_2(\hat{K})^d}}\gtrsim 1.
 \ee
By employing a contravariant Piola transformation \new{(e.g.~\cite[Definition~9.8]{70.97}) in order to preserve the vanishing normal components of the vector field $\new{\vec{v}}$}, this results carries over to each uniformly shape regular $d$-simplex $K$.

\new{By selecting 
$\|\eta\|_{L_2(K)}=\|\phi\|_{L_2(K)}$
and
$\|\new{\vec{v}}\|_{L_2(K)^d}=\|\new{\vec{u}}\|_{L_2(K)^d}$
 in \eqref{81} and \eqref{82}, and summing over $K \in \tria^\delta$}, using $|\cdot|\geq \Re\,\cdot$  we conclude that with
 \begin{align*}
\hat{V}^\delta:=\big\{&\mathbbm{v} \in L_2(\Omega)^{d+1}\colon\\
 & \mathbbm{v}|_K \in (P_{p+d+1}(K) \cap H^1_0(K)) \times (\RT_{p+1}(K) \cap H_0(\divv;K))\,\new{(K \in \tria^\delta)}\big\} \subset V^\delta_{\mp},
\end{align*}
it holds that
\be \label{r1}
 \inf_{0 \neq (\phi,\vec{u}) \in U^\delta} \sup_{0\neq (\eta,\vec{v}) \in \hat{V}^\delta}\frac{\big|\langle (\phi,\vec{u}),(\eta,\vec{v})\rangle_{U}\big|}{\|(\phi,\vec{u})\|_{U}\|(\eta,\vec{v})\|_{U}}\gtrsim 1.
\ee

Using an inverse inequality on $\hat{V}^\delta$, for $(\eta,\vec{v}) \in \hat{V}^\delta$ we have
\be \label{r2}
\|B_\kappa'(\eta,\vec{v})+(\eta,\vec{v})\|_{U}=\tfrac{1}{\kappa} \|(-\divv \vec{v},\nabla \eta)\|_{U} \lesssim \tfrac{1}{\kappa h_{\delta,\min}} \|(\eta,\vec{v})\|_{U}.
\ee

From \eqref{r1} and \eqref{r2}, by estimating
\begin{align*}
\big|\langle (\phi,\vec{u}), B_\kappa'(\eta,\vec{v})\rangle_{U}\big|& =
\big|-\langle (\phi,\vec{u}), (\eta,\vec{v}) \rangle_{U}+
\langle (\phi,\vec{u}), B_\kappa'(\eta,\vec{v})+ (\eta,\vec{v})\rangle_{U}\big|\\
& \geq
\big|\langle (\phi,\vec{u}), (\eta,\vec{v}) \rangle_{U}\big| -\|(\phi,\vec{u})\|_{U}\|B_\kappa'(\eta,\vec{v}) +(\eta,\vec{v})\|_{U}
\end{align*}
one infers inf-sup stability \new{$\gamma_\kappa^\delta \gtrsim 1$} for the the pair $(U^\delta,\hat{V}^\delta)$ when $\kappa h_{\delta,\min}$ is sufficiently large. Obviously this inf-sup stability remains valid when we replace $\hat{V}^\delta$ by 
$V^\delta_{\mp}$.
\end{proof}

\begin{remark} The results in this and the next subsection about inf-sup stability are shown for pairs $(U^\delta,V^\delta_\mp)$ where $U^\delta$ is a space of discontinuous piecewise polynomials. Obviously, these results carry over to the case when such a space of discontinuous piecewise polynomials is replaced by the smaller space 
 of corresponding continuous piecewise polynomials.
\end{remark}

\begin{remark} \new{The stability demonstrated in Theorem~\ref{thm:coarse} clearly also holds true when $V_{\mp}^\delta$ from that theorem is replaced by the larger space $V_{\mp}^\delta:=(\cS^0_{\tilde{p}}(\tria^\delta) \times \RT_{\tilde{p}}(\tria^\delta)) \cap V_{\mp}$, where $\tilde{p}=p+d+1$. Test spaces of this type will also be considered in the next subsection.
By applying standard bases for $\cS^0_{\tilde{p}}(\tria^\delta)$ and $\RT_{\tilde{p}}(\tria^\delta)$, a basis for $V_{\mp}^\delta$ is obtained by eliminating the common DoFs of $\RT_{\tilde{p}}(\tria^\delta)$ associated to element faces on $\Gamma_N \cup \Gamma_R$ by imposing for $(\eta,\vec{v}) \in \cS^0_{\tilde{p}}(\tria^\delta) \times \RT_{\tilde{p}}(\tria^\delta)$, $\vec{v}\cdot\vec{n}=0$ on $\Gamma_N$ and $\vec{v}\cdot\vec{n}=\pm i \eta$ on $\Gamma_R$.}
\end{remark}

\subsection{The case of $\kappa h_\delta$ uniformly bounded} \label{sec:practical}
In this subsection, we assume that the conforming, uniformly shape regular subdivisions $\tria^\delta$ of $\Omega$ into $d$-simplices are \emph{quasi-uniform}, i.e., that
$$
h_\delta:=h_{\delta,\min} \eqsim h_{\delta,\max}:=\max_{K \in \tria^\delta} |K|^{1/d}.
$$

\new{In view of Proposition~\ref{proppie}, the next lemma will lead to a sufficient condition for the uniform inf-sup stability $\gamma_\kappa^\delta \gtrsim 1$.

\begin{lemma} \label{lemma:extra} For some $p \in \N_0$, let $U^\delta \subseteq \cS_p^{-1}(\tria^\delta)^{d+1}$. 
Let $V^\delta_{\mp}$ be some closed subspace of $V_{\mp}$.  Then for given $s \in (0,\frac12)$, it holds that
\begin{align*}
\sup_{0 \neq \mathbbm{w}^\delta \in U^\delta} &
\frac{
\inf_{\mathbbm{v}^\delta \in V_\mp^\delta} \|B_\kappa'\big[{B_\kappa'}^{-1} \mathbbm{w}^\delta -\mathbbm{v}^\delta\big]\|_U
}
{
\|\mathbbm{w}^\delta\|_U
}
\\
&\hspace{3em}\lesssim \Big((p+1)^2
\sup_{0 \neq \mathbbm{w}\in H^1_0(\Omega)^{d+1}}
\frac{
\inf_{\mathbbm{v}^\delta \in V_\mp^\delta} \|B_\kappa'\big[{B_\kappa'}^{-1} \mathbbm{w} -\mathbbm{v}^\delta\big]\|_U
}
{
\sqrt{\|\mathbbm{w}\|^2_U+h_\delta\|\mathbbm{w}\|_{H^1(\Omega)^{d+1}}^2}
}
\Big)^s.
\end{align*}
\end{lemma}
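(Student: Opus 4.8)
The plan is to reduce the statement about approximating $\mathbbm{w}^\delta \in U^\delta$ (a discontinuous piecewise polynomial of degree $p$) to the statement about approximating a generic $\mathbbm{w}\in H^1_0(\Omega)^{d+1}$, by constructing, for each $\mathbbm{w}^\delta$, a smooth surrogate $\mathbbm{w}\in H^1_0(\Omega)^{d+1}$ that is close to $\mathbbm{w}^\delta$ in an appropriate sense. First I would note that the left-hand side is, by Proposition~\ref{proppie}, essentially $\sqrt{1-(\gamma_\kappa^\delta)^2}$, i.e. the $\|\cdot\|_{V_\mp,\kappa}$-distance from ${B_\kappa'}^{-1}\mathbbm{w}^\delta$ to $V_\mp^\delta$ after normalizing by $\|\mathbbm{w}^\delta\|_U$; but for the proof I would work directly with the quotient as written. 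Given $\mathbbm{w}^\delta\in U^\delta$, the key tool is a smoothing/quasi-interpolation operator: one applies a mollification on scale $h_\delta$ (or a Scott--Zhang-type averaging that also respects the zero boundary trace) to obtain $\mathbbm{w}\in H^1_0(\Omega)^{d+1}$ with $\|\mathbbm{w}^\delta-\mathbbm{w}\|_U \lesssim h_\delta^{1/2}\|\mathbbm{w}^\delta\|_{?}$-type bounds and $\|\mathbbm{w}\|_{H^1(\Omega)^{d+1}}\lesssim h_\delta^{-1/2}\|\mathbbm{w}^\delta\|_U$ up to polynomial-degree factors. The $(p+1)^2$ factor is exactly the price of the inverse inequality $\|\mathbbm{w}^\delta\|_{H^1(K)}\lesssim (p+1)^2 h_K^{-1}\|\mathbbm{w}^\delta\|_{L_2(K)}$ on each simplex, which on a quasi-uniform mesh aggregates to $\|\mathbbm{w}^\delta\|$ in a fractional ($H^s$, $s<\tfrac12$) sense with constant $\sim (p+1)^{2s}$; the restriction $s<\tfrac12$ is what keeps the broken polynomial $\mathbbm{w}^\delta$ controllable in $H^s(\Omega)$ even though it is discontinuous (for $s<\tfrac12$, $H^s$ does not see jumps across $(d-1)$-faces).

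Concretely I would proceed as follows. Fix $s\in(0,\tfrac12)$. For $\mathbbm{w}^\delta\in U^\delta\setminus\{0\}$, let $\mathbbm{w}:=E^\delta\mathbbm{w}^\delta\in H^1_0(\Omega)^{d+1}$ be a suitable averaging/smoothing of $\mathbbm{w}^\delta$; the properties I need are (a) $\|\mathbbm{w}^\delta-\mathbbm{w}\|_U \lesssim (p+1)^{2s} h_\delta^{s}\,\|\mathbbm{w}^\delta\|_U$, and (b) $\sqrt{\|\mathbbm{w}\|^2_U+h_\delta\|\mathbbm{w}\|^2_{H^1(\Omega)^{d+1}}} \lesssim (p+1)^{2s} \|\mathbbm{w}^\delta\|_U$, both following from standard (hp-)inverse estimates and approximation properties of the smoother on quasi-uniform meshes. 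Then for any $\mathbbm{v}^\delta\in V_\mp^\delta$, write
\[
\|B_\kappa'[{B_\kappa'}^{-1}\mathbbm{w}^\delta-\mathbbm{v}^\delta]\|_U \le \|B_\kappa'[{B_\kappa'}^{-1}\mathbbm{w}-\mathbbm{v}^\delta]\|_U + \|\mathbbm{w}^\delta-\mathbbm{w}\|_U,
\]
since $B_\kappa'{B_\kappa'}^{-1}=\identity$ on $U$. Taking the infimum over $\mathbbm{v}^\delta\in V_\mp^\delta$ on the right and then over the left, and dividing by $\|\mathbbm{w}^\delta\|_U$, the first term is bounded by
\[
\frac{\inf_{\mathbbm{v}^\delta\in V_\mp^\delta}\|B_\kappa'[{B_\kappa'}^{-1}\mathbbm{w}-\mathbbm{v}^\delta]\|_U}{\sqrt{\|\mathbbm{w}\|^2_U+h_\delta\|\mathbbm{w}\|^2_{H^1(\Omega)^{d+1}}}}\cdot\frac{\sqrt{\|\mathbbm{w}\|^2_U+h_\delta\|\mathbbm{w}\|^2_{H^1(\Omega)^{d+1}}}}{\|\mathbbm{w}^\delta\|_U},
\]
where the first factor is $\le$ the supremum appearing on the right-hand side of the claimed inequality and the second factor is $\lesssim (p+1)^{2s}$ by (b); the second term is $\lesssim (p+1)^{2s}h_\delta^s$ by (a), which is absorbed because $h_\delta\lesssim 1$ and because the right-hand-side supremum is bounded below away from zero in the regime of interest (alternatively, one keeps $h_\delta^s$ and notes it only helps). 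Combining and taking the supremum over $\mathbbm{w}^\delta$ yields the stated bound with exponent $s$ and prefactor a power of $(p+1)$; absorbing the exponent $s$ onto the whole $(p+1)^2\times(\text{sup})$ block (legitimate since $(p+1)^{2s}\le ((p+1)^2(\cdots))^{s}$ when the supremum is $\gtrsim 1$, and otherwise the whole quantity is tiny) gives the displayed form.

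The main obstacle is constructing the smoothing operator $E^\delta$ with the two quantitative properties (a) and (b), in particular respecting the homogeneous Dirichlet trace so that the image genuinely lies in $H^1_0(\Omega)^{d+1}$, while tracking the dependence on the polynomial degree $p$. The natural device is a combination of a Clément/Scott--Zhang averaging (which handles boundary conditions and gives the $h_\delta^{-1/2}$ gain in $H^1$) composed with, or compared against, an $L_2$-orthogonal or nodal argument to get the fractional-order bound $\|\mathbbm{w}^\delta - E^\delta\mathbbm{w}^\delta\|_{L_2}\lesssim h_\delta^s \|\mathbbm{w}^\delta\|_{H^s_{\mathrm{broken}}}$ together with the inverse estimate $\|\mathbbm{w}^\delta\|_{H^s_{\mathrm{broken}}}\lesssim (p+1)^{2s} h_\delta^{-s}\|\mathbbm{w}^\delta\|_U$; here the $H^s$-with-$s<\tfrac12$ theory for discontinuous piecewise polynomials (e.g. interpolation between $L_2$ and the broken $H^1$, or the characterization via Aronszajn--Slobodeckij seminorms that ignores jumps for $s<\tfrac12$) is what makes the discontinuity of $\mathbbm{w}^\delta$ harmless. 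The $(p+1)^2$ factor and not a larger power is the sharp outcome of the $hp$ inverse inequality on simplices; getting exactly that, rather than a cruder degree dependence, is the delicate bookkeeping in the argument.
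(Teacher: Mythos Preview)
Your smoothing-plus-triangle-inequality approach has a genuine gap: it cannot produce the exponent $s$ on the supremum. After your split you end up with a bound of the form
\[
\frac{\inf_{\mathbbm{v}^\delta}\|B_\kappa'[{B_\kappa'}^{-1}\mathbbm{w}^\delta-\mathbbm{v}^\delta]\|_U}{\|\mathbbm{w}^\delta\|_U}
\;\le\; C_1\cdot(\text{RHS sup}) \;+\; \frac{\|\mathbbm{w}^\delta-E^\delta\mathbbm{w}^\delta\|_U}{\|\mathbbm{w}^\delta\|_U},
\]
and the second, \emph{additive} term does not tend to zero with the supremum. Indeed your property~(a) is not achievable: combining an approximation bound $\|\mathbbm{w}^\delta-E^\delta\mathbbm{w}^\delta\|_{L_2}\lesssim h_\delta^{s}\|\mathbbm{w}^\delta\|_{H^s}$ with the $hp$-inverse estimate $\|\mathbbm{w}^\delta\|_{H^s}\lesssim (h_\delta^{-1}(p+1)^2)^{s}\|\mathbbm{w}^\delta\|_{L_2}$ gives only $\|\mathbbm{w}^\delta-E^\delta\mathbbm{w}^\delta\|_U\lesssim (p+1)^{2s}\|\mathbbm{w}^\delta\|_U$, with no residual factor $h_\delta^{s}$. (Think of a checkerboard piecewise constant: any $H^1_0$ surrogate differs from it by an amount comparable to $\|\mathbbm{w}^\delta\|_{L_2}$.) Your attempt to ``absorb'' this term by assuming the supremum is bounded away from zero is precisely backwards: the lemma is only useful when that supremum is \emph{small}, and in that regime your additive term dominates.

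The paper avoids this entirely by \emph{operator interpolation}. One observes that $\mathbbm{w}\mapsto (\identity-P^\delta){B_\kappa'}^{-1}\mathbbm{w}$ (with $P^\delta$ the $\langle\cdot,\cdot\rangle_{V_{\mp,\kappa}}$-orthogonal projector onto $V_\mp^\delta$) has operator norm $\le 1$ from $U$ and operator norm equal to the right-hand supremum from $U_1:=\{H^1_0(\Omega)^{d+1},\sqrt{\|\cdot\|_U^2+h_\delta^2\|\cdot\|_{H^1}^2}\}$. Riesz--Thorin then gives the bound $(\text{sup})^s$ from $[U,U_1]_{s,2}$. Since $s<\tfrac12$, this interpolation space is $H^s(\Omega)^{d+1}$ with norm $\eqsim\sqrt{\|\cdot\|_U^2+h_\delta^{2s}\|\cdot\|_{H^s}^2}$, which contains the discontinuous piecewise polynomials, and the $hp$-inverse inequality $\|w^\delta\|_{H^s}\lesssim (h_\delta^{-1}(p+1)^2)^s\|w^\delta\|_{L_2}$ finishes the job. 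You correctly identified the role of $s<\tfrac12$ and of the $(p+1)^2$ inverse constant; what is missing is that interpolation, not a pointwise surrogate, is the mechanism that turns the $H^1_0$ bound into the $s$-th power.
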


\begin{proof}
Clearly, it holds that
$$
\sup_{0 \neq \mathbbm{w}\in U}
\frac{
\inf_{\mathbbm{v}^\delta \in V_\mp^\delta} \|B_\kappa'\big[{B_\kappa'}^{-1} \mathbbm{w} -\mathbbm{v}^\delta\big]\|_U
}
{
\|\mathbbm{w}\|_U
} \leq 1.
$$
We set $U_1:=H_0^1(\Omega)^{d+1}$ equipped with norm $
\|\cdot\|^2_{U_1}:=
\sqrt{\|\cdot\|^2_{U}+h_\delta^2\|\cdot\|^2_{H^1(\Omega)^{d+1}}}.
$
Recalling that $\inf_{\mathbbm{v}^\delta \in V_\mp^\delta} \|B_\kappa'\big[{B_\kappa'}^{-1} \cdot -\mathbbm{v}^\delta\big]\|_U=
\|(\identity-P^\delta){B_\kappa'}^{-1}\cdot\|_{V_{\mp,\kappa}}$,  from the Riesz-Thorin interpolation theorem it follows that with $[U,U_1]_{s,2}$ being the interpolation space (e.g.~\cite{20}),
\be \label{90}
\sup_{0 \neq\mathbbm{w} \in [U,U_1]_{s,2}} \hspace*{-1.5em}
\frac{\inf_{\mathbbm{v}^\delta \in V_\mp^\delta} \|B_\kappa'\big[{B_\kappa'}^{-1} \mathbbm{w} -\mathbbm{v}^\delta\big]\|_U}
{\|\mathbbm{w}\|_{[U,U_1]_{s,2}}} \leq \Big(
\sup_{0 \neq \mathbbm{w}\in U_1} \hspace*{-1em}
\frac{\inf_{\mathbbm{v}^\delta \in V_\mp^\delta} \|B_\kappa'\big[{B_\kappa'}^{-1} \mathbbm{w}-\mathbbm{v}^\delta\big]\|_U}{\|\mathbbm{w}\|_{U_1}}
\Big)^s.
\ee

Noticing that $U_1=U \cap \big\{H_0^1(\Omega)^{d+1}, h_\delta \|\cdot\|_{H^1(\Omega)^{d+1}}\big\}$, it follows from \cite[Ch.~1, Thm.~13.1]{185} 
that
$$
[U,U_1]_{s,2} \simeq U \cap \Big[U,\big\{H_0^1(\Omega)^{d+1}, h_\delta \|\cdot\|_{H^1(\Omega)^{d+1}}\big\}\Big]_{s,2}.
$$
Because $s \in (0,\frac12)$, $[L_2(\Omega),H_0^1(\Omega)]_{s,2} \simeq H^s(\Omega)$ (e.g.~\cite[Ch.~1, Thm.~11.1\&11.6]{185}), 
and thus $[L_2(\Omega),\{H_0^1(\Omega),h_\delta\|\cdot\|_{H^1(\Omega)}\}]_{s,2} \simeq \{H^s(\Omega),h^s_\delta\|\cdot\|_{H^s(\Omega)}\}$.
We conclude that $[U,U_1]_{s,2}$ equals $H^s(\Omega)^{d+1}$ equipped with norm
\be \label{91}
\|\cdot\|_{[U,U_1]_{s,2}} \eqsim \sqrt{\|\cdot\|_U^2+h^{2s}_\delta\|\cdot\|_{H^s(\Omega)^{d+1}}^2}\,.
\ee

Again because $s \in (0,\frac12)$, it holds that $\cS_p^{-1}(\tria^\delta) \subset H^s(\Omega)$, and as shown in \cite[Thm.~3.5]{168.835}, the following inverse inequality is valid
\be \label{92}
\|w^\delta\|_{H^s(\Omega)} \lesssim (h_\delta^{-1} (p+1)^2)^{s}  \|w^\delta\|_{L_2(\Omega)} \quad (w^\delta \in \cS_p^{-1}(\tria^\delta)).
\ee

From \eqref{90}, \eqref{91}, and \eqref{92}, one easily completes the proof.
\end{proof}}


\new{To continue,} we equip  $H^1(\Omega)$ and $H(\divv;\Omega)$ with the $\kappa$-dependent norms
\begin{align*}
\|\cdot\|_{1,\kappa}& :=\sqrt{\|\cdot\|_{L_2(\Omega)}^2+\tfrac{1}{\kappa^{2}} |\cdot|_{H^1(\Omega)}^2}\,\,,\\
\|\cdot\|_{\divv,\kappa}& :=\sqrt{\|\cdot\|_{L_2(\Omega)^{\new{d}}}^2+\tfrac{1}{\kappa^{2}} \|\divv \cdot\|_{L_2(\Omega)}^2}\,\,,
\intertext{and $H^{\frac12}(\partial\Omega)$ and $H^{-\frac12}(\partial\Omega)$ with the $\kappa$-dependent quotient norms}
\|g\|_{\frac12,\kappa}&:=\inf\{\|v\|_{1,\kappa}\colon v \in H^1(\Omega),\,v|_{\partial\Omega}=g\},\\
\|g\|_{-\frac12,\kappa}&:=\inf\{\|\vec{v}\|_{\divv,\kappa}\colon \vec{v} \in H(\divv;\Omega),\,\vec{v}\cdot\vec{n}|_{\partial\Omega} =g\}.
\end{align*}
An application of the triangle inequality gives that for $(\eta,\vec{v}) \in V_{\mp}$,
\be \label{triangle}
\|(\eta,\vec{v})\|^2_{V_{\mp},\kappa} \leq 2( \|\eta\|^2_{1,\kappa}+\|\vec{v}\|^2_{\divv,\kappa}).
\ee
\begin{remark} \label{crude}
The norm $\|\cdot\|_{V_{\mp},\kappa}$ on $V_{\mp}$ extends to a semi-norm on $H^1(\Omega) \times H(\divv;\Omega)$.
For $\eta$ with $-\Delta \eta-\kappa^2 \eta=0$, e.g.~a plane wave $\eta(\vec{x})=e^{-i\kappa \vec{r}\cdot\vec{x}}$ for some $\|\vec{r}\|=1$, and $\vec{v}=\kappa^{-1} \nabla \eta$, it holds that $\|(\eta,\vec{v})\|_{V_{\mp},\kappa}=0$.
The boundary condition included in the definition of $V_\mp$ distinguishes this space from $H^1(\Omega) \times H(\divv;\Omega)$.
Recalling that $\|\cdot\|_{V_{\mp},\kappa}=\|B_\kappa\cdot\|_U$, and ``$B_\infty$''$=\identity$, the restriction imposed by these boundary conditions vanish in the limit. Consequently, it can be expected that the inequality \eqref{triangle} can be increasingly crude when $\kappa \rightarrow \infty$.
\end{remark}

The following lemma is trivial for fixed $\kappa$, but it requires a proof to demonstrate its uniform validity for $\kappa \geq \kappa_0>0$.

\begin{lemma} \label{lem3} Let $\Omega \subset \R^d$ be a bounded Lipschitz polytope in which $d$ edges meet at each vertex.
Then
\be \label{bdrnorms}
\|\cdot\|_{-\frac12,\kappa} \lesssim \|\cdot\|_{\frac12,\kappa} \quad \text{on } H^{\frac12}(\partial\Omega) \,\,(\text{uniformly in } \kappa \geq \kappa_0>0).
\ee
\end{lemma}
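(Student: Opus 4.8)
The plan is to reduce the estimate to the construction of a single, $\kappa$-independent Lipschitz vector field on $\overline\Omega$. Given $g\in H^{\frac12}(\partial\Omega)$, I would first pick $v\in H^1(\Omega)$ with $v|_{\partial\Omega}=g$ and $\|v\|_{1,\kappa}\le 2\|g\|_{\frac12,\kappa}$, and aim to build a competitor $\vec{v}\in H(\divv;\Omega)$ with $\vec{v}\cdot\vec{n}=g$ and $\|\vec{v}\|_{\divv,\kappa}\lesssim\|v\|_{1,\kappa}$; the claim then follows by taking the infimum over admissible $v$. The competitor will be $\vec{v}:=v\,\vec{N}$, where $\vec{N}\in W^{1,\infty}(\Omega)^d$ satisfies $\vec{N}\cdot\vec{n}=1$ a.e.\ on $\partial\Omega$ and has $\|\vec{N}\|_{L_\infty}$ and $\|\divvv\vec{N}\|_{L_\infty}$ bounded in terms of $\Omega$ only. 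Then $\vec{v}\cdot\vec{n}=v|_{\partial\Omega}(\vec{N}\cdot\vec{n})=g$, $\divvv\vec{v}=\nabla v\cdot\vec{N}+v\,\divvv\vec{N}$, and the product rule together with the boundedness of $\vec{N}$ gives
\[
\|\vec{v}\|_{\divv,\kappa}^2\lesssim\|v\|_{L_2(\Omega)}^2+\tfrac1{\kappa^2}\big(|v|_{H^1(\Omega)}^2+\|v\|_{L_2(\Omega)}^2\big)=\|v\|_{1,\kappa}^2+\tfrac1{\kappa^2}\|v\|_{L_2(\Omega)}^2\le(1+\kappa_0^{-2})\|v\|_{1,\kappa}^2,
\]
using $\kappa\ge\kappa_0$; this is exactly the required uniformity. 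That the normal trace of $v\vec{N}$ equals the $H^{\frac12}(\partial\Omega)$-function $g$ I would justify from the integration-by-parts identity defining the normal trace together with density of smooth functions.

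The heart of the matter is constructing $\vec{N}$, and this is where the hypothesis is used. Since $d$ edges meet at each vertex, $\Omega$ is a \emph{simple} polytope: exactly $d$ facets meet at every vertex, and since their supporting hyperplanes intersect in a single point, their unit normals are linearly independent. Hence for every proper face $G$ of $\Omega$ the normals of the facets containing $G$ form a linearly independent set, so the linear system $\vec{N}_G\cdot\vec{n}_j=1$ (over all facets $F_j\supseteq G$) is solvable for some $\vec{N}_G\in\R^d$; set $\vec{N}_\Omega:=0$. I would then take a finite open cover of $\overline\Omega$ by a set $U_\Omega\Subset\Omega$ and, for each proper face $G$, a neighbourhood $U_G$ of a relatively open $W_G$ with $\overline{W_G}\subset\mathrm{relint}(G)$, the $W_G$ chosen by the shrinking lemma so that $\{U_\Omega\}\cup\{U_G\}$ still covers $\overline\Omega$, and each $U_G$ taken small enough that every $x\in U_G\cap\partial\Omega$ lies on some face $G'\supseteq G$. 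For such an $x$, the facets through $x$ are among those through $G$, so $\vec{N}_G\cdot\vec{n}=1$ at $x$. Taking a smooth partition of unity $\{\phi_G\}$ subordinate to the cover and $\vec{N}:=\sum_G\phi_G\vec{N}_G$, one gets $\vec{N}\in W^{1,\infty}(\Omega)^d$ with $\|\vec{N}\|_{L_\infty}$ and $\|\divvv\vec{N}\|_{L_\infty}$ controlled by the $|\vec{N}_G|$ and the $\|\nabla\phi_G\|_{L_\infty}$, i.e.\ by $\Omega$ alone, and for a.e.\ $x\in\partial\Omega$ only those $\vec{N}_G$ with $\vec{N}_G\cdot\vec{n}=1$ at $x$ contribute, whence $\vec{N}(x)\cdot\vec{n}=\sum_{G:\,x\in U_G}\phi_G(x)=1$.

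I expect the geometric construction of $\vec{N}$ to be the only real obstacle: both the solvability of the local systems and the compatibility of the local solutions across faces of all codimensions rely on the simplicity of the polytope (without it, more than $d$ facets could meet at a vertex with a rank-deficient normal system, and the patching would break down), and one has to be a little careful choosing the cover with the stated separation property. Everything else — the reduction to finding a competitor $\vec{v}$, the product-rule estimate, and identifying the normal trace of $v\vec{N}$ — is routine.
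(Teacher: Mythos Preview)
Your proposal is correct and follows essentially the same approach as the paper: both reduce to constructing, for a given $v\in H^1(\Omega)$, a competitor $\vec{v}=v\,\vec{N}$ with a $\kappa$-independent Lipschitz field $\vec{N}$ satisfying $\vec{N}\cdot\vec{n}=1$ on $\partial\Omega$, obtained by solving the local linear systems $\vec{N}\cdot\vec{n}_j=1$ (solvable precisely because simplicity makes the facet normals through a vertex linearly independent) and gluing via a partition of unity. The only cosmetic difference is that the paper uses a partition of unity indexed by vertices alone (each $\chi_i$ supported near vertex $i$ and vanishing on facets not through it), whereas you index by all proper faces; both yield the same estimate.
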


\begin{proof} 
It suffices to show that for each $\phi \in H^1(\Omega)$ there exists a $\vec{v} \in H(\divv;\Omega)$ with $\vec{v}\cdot\vec{n}|_{\partial\Omega}=\phi|_{\partial\Omega}$,
and $\|\vec{v}\|_{\divv,\kappa} \lesssim \|\phi\|_{1,\kappa}$.

With $J$ being the number of vertices, let $\{\chi_i \colon 1 \leq i \leq J\}$ be a partition of unity of an open neighborhood $\hat{\Omega}$ of $\Omega$
 where 
$\chi_i \in W^1_\infty(\hat{\Omega})$, $\supp \chi_i$ is connected, and $\chi_i$ vanishes on all facets of $\Omega$ that do not emanate from the $i$th vertex.

Let $\vec{n}_{i,j}$, $1 \leq j \leq d$, be the normals to the faces that intersect at the $i$th vertex.
With $\vec{r}_i$ such that $\vec{r}_i \cdot \vec{n}_{i,j}=1$ ($1 \leq j \leq d$), let $\vec{v}_i:=\chi_i \phi \vec{r}_i$.
Then $\vec{v}_i \cdot \vec{n}|_{\partial\Omega}=\chi_i \phi|_{\partial\Omega}$, and so $\vec{v}:=\sum_i \vec{v}_i$ satisfies $\vec{v}\cdot\vec{n}|_{\partial\Omega}=\phi|_{\partial\Omega}$.

Furthermore,
$\|\vec{v}\|_{L_2(\Omega)^d} \leq \sum_i \|\vec{v}_i\|_{L_2(\Omega)^d} \lesssim \sum_i  \|\chi_i \phi\|_{L_2(\Omega)} \lesssim \|\phi\|_{L_2(\Omega)}$
and
$$
\|\divv \vec{v}\|_{L_2(\Omega)} \leq \sum_i \|\divv \vec{v}_i\|_{L_2(\Omega)} \lesssim \sum_i  \sum_k |\vec{r}_{\new{i}}\cdot\vec{e}_k|\|\partial_k (\chi_i \phi)\|_{L_2(\Omega)}
\lesssim \|\phi\|_{L_2(\Omega)}+|\phi|_{H^1(\Omega)},
$$
and so $\|\vec{v}\|_{\divv,\kappa} \lesssim \|\phi\|_{1,\kappa}$.
\end{proof}

\begin{remark} The validity of \eqref{bdrnorms} extends to other situations. For example, it can be shown when $\partial\Omega$ is $C^2$.
\end{remark}

For $\tilde{\tria}^\delta$ a conforming subdivision of $\Omega$ into $d$-simplices, we take
\be \label{301}
V_{\mp}^\delta:=(\cS_{\tilde{p}}^0(\tilde{\tria}^\delta) \times \RT_{\tilde{p}}(\tilde{\tria}^\delta)) \cap V_{\mp},
\ee
under the condition that $\tilde{p} \in \N$ and $\tilde{\tria}^\delta$ are selected such that
\be \label{MS}
\sup_{0 \neq f \in L_2(\Omega)} \inf_{\eta^\delta \in \cS_{\tilde{p}}^0(\tilde{\tria}^\delta) } \frac{{\| (L'_\kappa})^{-1} f -\eta^\delta\|_{1,\kappa}}{\|f\|_{L_2(\Omega)}} \lesssim \frac{h_\delta \kappa}{\tilde{p}},
\ee
Recall that $h_\delta$ is the mesh-size of $\tria^\delta$, and that the adjoint $L_\kappa'$ equals $L_\kappa$ from \eqref{2nd} with $\pm$ replaced by $\mp$.
\medskip

Before continuing, we discuss condition \eqref{MS} in some detail.
In the seminal work \cite{202.8}, the left-hand side of \eqref{MS} is denoted as $\kappa \eta(\cS_{\tilde{p}}^0(\tilde{\tria}^\delta))$. It is shown that $\kappa \eta(\cS_{\tilde{p}}^0(\tilde{\tria}^\delta))$ being less than some sufficiently small constant implies quasi-optimality w.r.t.~$\|\cdot\|_{1,\kappa}$ of the Galerkin discretisation on $\cS_{\tilde{p}}^0(\tilde{\tria}^\delta)$ of the usual second order variational formulation \eqref{2nd} of the Helmholtz problem.

For the case that $d \in \{2,3\}$, $\partial\Omega=\Gamma_R$, and $\Omega$ being a bounded domain with an analytic boundary, in \cite{202.8} it has been shown that \eqref{MS} holds for $\tilde{\tria}^\delta=\tria^\delta$ when $\kappa h_{\delta} \lesssim 1$ and $\tilde{p}^{-1} \log \kappa$ is sufficiently small (one verifies that under these conditions, the right-hand side of \cite[(5.11)]{202.8} can be bounded on a constant multiple of  $h_\delta/\tilde{p}$).
In this non-polytopal case, finite element spaces should be interpreted as spaces of piecewise mapped polynomials. Unfortunately, for our formulation of the Helmholtz equation as a first order system, in this setting where the aforementioned maps are non-polynomial, we do not know how to construct meaningful finite element subspaces $V_\pm^\delta$ of $V_\pm$. 
The problem is to satisfy the boundary condition $\vec{v}\cdot\vec{n}\pm i \eta^\delta=0$ for $(\eta^\delta,\vec{v}^\delta) \in V_\pm^\delta$.

For convex polygons and  $\partial\Omega=\Gamma_R$, in \cite[\S5.1.2]{202.8} it is shown that for $\tilde{\tria}^\delta$ being a quasi-uniform partition of $\Omega$ into $d$-simplices with mesh-size $\tilde{h}_\delta$, with some additional refinements in ${\mathcal O}(\tilde{h}_\delta)$-neighbourhoods of the vertices (which retain $\# \tilde{\tria}^\delta={\mathcal O}(\tilde{h}_\delta^{-d})$), $\kappa \tilde{h}_{\delta} \lesssim 1$, and $\tilde{p}^{-1} \log \kappa$ being sufficiently small, \eqref{MS} holds with a modified upper bound reading as $\frac{\tilde{h}_\delta \kappa+\tilde{h}_\delta^{1/(1+\eps)}}{\tilde{p}}$, where $\eps$ can be any positive number (see \cite[Remark~4.10]{202.8}).
We conclude that \eqref{MS} is valid when
$$
\tilde{h}_\delta \lesssim h_\delta^{1+\eps}.
$$

\begin{remark} The latter condition requires an arbitrary small increase in the algebraic order of complexity of $\tilde{\tria}^\delta$ compared to that of $\tria^\delta$.
In numerical experiments on general polygons we observe quasi-optimal approximations when taking $\tilde{\tria}^\delta=\tria^\delta$, even without the condition of $\tria^\delta$ being quasi-uniform.
\end{remark}


\begin{theorem} \label{thm:fine} \new{Let $\partial\Omega=\Gamma_R$, and let \eqref{bdrnorms} be valid. 
Let $U^\delta=\cS_p^{-1}(\tria^\delta)^{d+1}$, and let $V_\mp^\delta$ be as in \eqref{301}. 
Besides the conditions on $\tilde{\tria}^\delta$ and $\tilde{p}$ needed to guarantee \eqref{MS} {\rm (}on a convex polygon, sufficient is $(\log \kappa)/\tilde{p}$ sufficiently small, and, essentially, $\tilde{h}_\delta \lesssim h_\delta^{1+\eps}${\rm )}, assume that $\kappa h_\delta \lesssim 1$, $p^2/\tilde{p}$ is sufficiently small, and $\max_{K \in \tilde{\tria}^\delta} \diam(K) \lesssim h_\delta$.
Then $\gamma_\kappa^\delta \gtrsim 1$.}
\end{theorem}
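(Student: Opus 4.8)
The plan is to combine Proposition~\ref{proppie} with Lemma~\ref{lemma:extra}, so that it suffices to bound the quantity
$$
\cE:=\sup_{0 \neq \mathbbm{w}\in H^1_0(\Omega)^{d+1}}
\frac{
\inf_{\mathbbm{v}^\delta \in V_\mp^\delta} \|B_\kappa'\big[{B_\kappa'}^{-1} \mathbbm{w} -\mathbbm{v}^\delta\big]\|_U
}
{
\sqrt{\|\mathbbm{w}\|^2_U+h_\delta\|\mathbbm{w}\|_{H^1(\Omega)^{d+1}}^2}
}.
$$
Indeed, by Lemma~\ref{lemma:extra} the left-hand side of \eqref{71} is then $\lesssim ((p+1)^2 \cE)^s$ for any $s \in (0,\tfrac12)$, and hence, by Proposition~\ref{proppie}, $1-(\gamma_\kappa^\delta)^2 \lesssim ((p+1)^2 \cE)^{2s}$, so that $\gamma_\kappa^\delta \gtrsim 1$ follows once we show $(p+1)^2\cE$ is bounded by a sufficiently small constant — which, given the hypothesis that $p^2/\tilde p$ is sufficiently small, reduces to showing $\cE \lesssim \frac{1}{\tilde p}$ (up to a constant we control).

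First I would translate $\cE$ into PDE language via Lemma~\ref{lem:easy}: writing $\mathbbm{w}=(h_1,\vec h_2) \in H^1_0(\Omega)^{d+1}$, the function $\mathbbm{v}:={B_\kappa'}^{-1}\mathbbm{w}=(\eta,\tfrac1\kappa\nabla\eta-\vec h_2)$ where $\eta=(L_\kappa')^{-1}\big(h_1-\tfrac1\kappa\divv\vec h_2\big) \in H^1_{0,\Gamma_D}(\Omega)=H^1(\Omega)$ (since $\partial\Omega=\Gamma_R$). The approximant $\mathbbm{v}^\delta \in V_\mp^\delta$ will be built componentwise: take $\eta^\delta \in \cS^0_{\tilde p}(\tilde\tria^\delta)$ to be the near-best approximation to $\eta$ afforded by \eqref{MS}, and take the second component to be $\tfrac1\kappa\nabla\eta^\delta - \vec h_2^\delta$ for a Raviart--Thomas approximation $\vec h_2^\delta \in \RT_{\tilde p}(\tilde\tria^\delta)$ of $\vec h_2$; one must then repair this pair so that it satisfies the Robin constraint $\vec v^\delta\cdot\vec n = \pm i\,\eta^\delta$ on $\partial\Omega$ and thereby lies in $V_\mp^\delta$. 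Since $\|B_\kappa'(\mathbbm{v}-\mathbbm{v}^\delta)\|_U = \|(\eta-\eta^\delta) + \tfrac1\kappa\divv(\text{second comp.~difference}),\ \tfrac1\kappa\nabla(\eta-\eta^\delta) - (\text{second comp.~difference})\|_U$, the estimate decouples into: (i) $\|\eta-\eta^\delta\|_{1,\kappa}$, controlled by \eqref{MS} with right-hand side $\lesssim \frac{h_\delta\kappa}{\tilde p}\|h_1-\tfrac1\kappa\divv\vec h_2\|_{L_2(\Omega)}$; and (ii) the Raviart--Thomas approximation of $\vec h_2$ together with the divergence term. The trace-correction step is where Lemma~\ref{lem3}/\eqref{bdrnorms} enters: the correction field handling the mismatch $\vec v^\delta\cdot\vec n \mp i\eta^\delta$ on $\partial\Omega$ is bounded in $\|\cdot\|_{\divv,\kappa}$ by the $\|\cdot\|_{-\frac12,\kappa}$-norm of its normal trace, which by \eqref{bdrnorms} is $\lesssim$ the $\|\cdot\|_{\frac12,\kappa}$-norm of a function restricting to $\eta-\eta^\delta$ on $\partial\Omega$, i.e.~$\lesssim \|\eta-\eta^\delta\|_{1,\kappa}$, already controlled in (i).

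The remaining arithmetic is to collect the factors of $\kappa$ and $h_\delta$ correctly. In the denominator of $\cE$ we have $\sqrt{\|\mathbbm{w}\|_U^2 + h_\delta\|\mathbbm{w}\|_{H^1}^2}$; the numerator, by the above, is $\lesssim \frac{h_\delta\kappa}{\tilde p}\|h_1-\tfrac1\kappa\divv\vec h_2\|_{L_2} + (\text{RT approximation of }\vec h_2)$. Using $\|\tfrac1\kappa\divv\vec h_2\|_{L_2} \lesssim \tfrac1\kappa \|\vec h_2\|_{H^1} \leq \tfrac1\kappa\|\mathbbm{w}\|_{H^1}$, and $\kappa h_\delta \lesssim 1$, the first contribution is $\lesssim \frac{1}{\tilde p}\big(h_\delta\kappa\|h_1\|_{L_2} + h_\delta\|\vec h_2\|_{H^1}\big) \lesssim \frac{1}{\tilde p}\big(\|\mathbbm{w}\|_U + \sqrt{h_\delta}\cdot\sqrt{h_\delta}\|\mathbbm{w}\|_{H^1}\big) \lesssim \frac{1}{\tilde p}\sqrt{\|\mathbbm{w}\|_U^2+h_\delta\|\mathbbm{w}\|_{H^1}^2}$, using $h_\delta \lesssim 1$; for the $\RT_{\tilde p}$ term one uses a standard polynomial approximation estimate on $\tilde\tria^\delta$ (with $\max_K\diam K \lesssim h_\delta$) giving $\|\vec h_2 - \vec h_2^\delta\|_{\divv,\kappa} \lesssim \frac{\tilde h_\delta}{\tilde p}\|\vec h_2\|_{H^1} \cdot(1+\kappa^{-1}) \lesssim \frac{h_\delta}{\tilde p}\|\mathbbm{w}\|_{H^1} \lesssim \frac{1}{\tilde p}\sqrt{\cdots}$ using $\tilde h_\delta \lesssim h_\delta$ and $\kappa\geq\kappa_0$. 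Hence $\cE \lesssim \tfrac1{\tilde p}$, so $(p+1)^2\cE \lesssim \tfrac{(p+1)^2}{\tilde p}$, which is $\leq$ a prescribed small constant by hypothesis, and $\gamma_\kappa^\delta\gtrsim 1$ follows. I expect the main obstacle to be the trace-correction step: one must produce, \emph{uniformly in $\kappa$}, a Raviart--Thomas (or at least $H(\divv)$-conforming, then projectable into $\RT_{\tilde p}$) field carrying the prescribed normal trace on $\partial\Omega$ with the right $\|\cdot\|_{\divv,\kappa}$ bound and without destroying the polynomial approximation already achieved interior to the elements — this is exactly the role of Lemma~\ref{lem3}, but marrying the commuting-diagram Raviart--Thomas interpolant (whose stability constants must not blow up in $\tilde p$ on the locally refined mesh near the vertices) with the lifting from \eqref{bdrnorms} requires care, and is the technical heart of the argument.
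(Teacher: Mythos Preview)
Your overall strategy---reducing via Proposition~\ref{proppie} and Lemma~\ref{lemma:extra} to bounding $\cE\lesssim 1/\tilde p$, invoking Lemma~\ref{lem:easy} to express ${B_\kappa'}^{-1}\mathbbm w$ through the adjoint Helmholtz solution $\eta$, approximating $\eta$ via \eqref{MS}, and handling the Robin trace mismatch through Lemma~\ref{lem3}/\eqref{bdrnorms}---is exactly the paper's route. The gap is in the construction of $\vec v^\delta$.

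You propose to take the second component as $\tfrac1\kappa\nabla\eta^\delta-\vec h_2^\delta$, and then ``repair'' only the boundary trace. But $\nabla\eta^\delta$ is \emph{not} in $H(\divv;\Omega)$: for $\eta^\delta\in\cS^0_{\tilde p}(\tilde{\tria}^\delta)$ the gradient has continuous tangential components across interior facets, not continuous normal components, so $\tfrac1\kappa\nabla\eta^\delta-\vec h_2^\delta\notin\RT_{\tilde p}(\tilde{\tria}^\delta)$ and in fact $\notin H(\divv;\Omega)$. Your formal computation of the first component of $B_\kappa'(\mathbbm v-\mathbbm v^\delta)$ then contains the term $\kappa^{-2}\Delta(\eta-\eta^\delta)$, and $\Delta\eta^\delta$ carries Dirac layers on the mesh skeleton; the expression is not an $L_2$ function and your decoupling ``(i)+(ii)'' breaks down. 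A related symptom: the claimed bound $\|\vec h_2-\vec h_2^\delta\|_{\divv,\kappa}\lesssim\frac{h_\delta}{\tilde p}\|\vec h_2\|_{H^1}$ fails for the divergence part, since $\divv\vec h_2\in L_2$ only, so $\|(\identity-\Pi^\delta)\divv\vec h_2\|_{L_2}$ has no $h_\delta/\tilde p$ rate from the $H^1$-regularity of $\vec h_2$ alone.

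The paper avoids this by approximating $\vec v=\tfrac1\kappa\nabla\eta-\vec u$ \emph{as a whole}. One first picks any $\vec z^\delta\in\RT_{\tilde p}(\tilde{\tria}^\delta)$ whose normal trace equals $\pm i\,\eta^\delta$, lifts the residual boundary datum $(\vec v-\vec z^\delta)\cdot\vec n|_{\partial\Omega}=\pm i(\eta-\eta^\delta)|_{\partial\Omega}$ to a field $\vec w\in H(\divv;\Omega)$ controlled via \eqref{bdrnorms} (this is your trace-correction idea, and it is right), and then applies the $hp$-stable commuting projector $Q^\delta$ of \cite{70.991} to $\vec v-\vec z^\delta-\vec w\in H_0(\divv;\Omega)$, setting $\vec v^\delta:=Q^\delta(\vec v-\vec z^\delta-\vec w)+\vec z^\delta$. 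The key identity that makes the divergence estimate go through is $\kappa^{-1}\divv\vec v=-\eta-\phi$ (from $B_\kappa'(\eta,\vec v)=(\phi,\vec u)$), so that $\kappa^{-1}\|(\identity-\Pi^\delta)\divv\vec v\|_{L_2}\le\|\eta-\eta^\delta\|_{L_2}+\|(\identity-\Pi^\delta)\phi\|_{L_2}$, the first term already controlled by \eqref{MS} and the second by $\phi\in H^1$. This is the substantive step your sketch is missing: you must approximate $\vec v$, not $\vec h_2$ and $\tfrac1\kappa\nabla\eta$ separately, and then exploit that $\divv\vec v$ is (up to $\kappa$) a sum of quantities you already know how to approximate.
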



\begin{proof} The proof consists of four steps. 
\new{{\bf (I)} In view of Proposition~\ref{proppie}, Lemma~\ref{lemma:extra}, and our assumption that $p^2/\tilde{p}$ is sufficiently small, it suffices to show that 
$$
\sup_{0 \neq \mathbbm{w}\in H^1_0(\Omega)^{d+1}}
\frac{
\inf_{\mathbbm{v}^\delta \in V_\mp^\delta} \|B_\kappa'\big[{B_\kappa'}^{-1} \mathbbm{w} -\mathbbm{v}^\delta\big]\|_U
}
{
\|\mathbbm{w}\|_{U_1}
}
\lesssim \frac{1}{\tilde{p}},
$$
where we recall that  $\|\cdot\|_{U_1}=\sqrt{\|\cdot\|^2_U+h_\delta\|\cdot\|_{H^1(\Omega)^{d+1}}^2}$.

Writing $\mathbbm{w} \in H^1_0(\Omega)^{d+1}$ as $(\phi,\vec{u})$, and denoting with $(\eta,\vec{v}) \in V_{\mp}$ the solution of
$$
B_\kappa' (\eta,\vec{v})=(-\kappa^{-1}\divv \vec{v}-\eta,\kappa^{-1} \nabla \eta-\vec{v})=(\phi,\vec{u}),
$$
by an application of \eqref{triangle}\footnote{Knowing that this inequality can be arbitrarily crude, see Remark~\ref{crude}, if we could avoid its use, then it might be possible to relax on the assumption~\eqref{MS}.} it suffices to show that
\be \label{p5}
\sup_{0 \neq (\phi,\vec{u}) \in U_1}\inf_{(\eta^\delta,\vec{v}^\delta) \in V^\delta_{\mp}} \frac{\|\eta-\eta^\delta\|_{1,\kappa}+\|\vec{v}-\vec{v}^\delta\|_{\divv,\kappa}}{\|(\phi,\vec{u})\|_{U_1}} \lesssim \new{\frac{1}{\tilde{p}}}.
\ee
Thanks to $\vec{u} \in H_0^1(\Omega)^d \subset H_0(\divv;\Omega)$, Lemma~\ref{lem:easy} shows that} $\eta \in H^1(\Omega)$ is the solution of $L_\kappa' \eta= \phi-\kappa^{-1} \divv \vec{u}$.
Condition~\eqref{MS} shows the existence of an $\eta^\delta \in  \cS_{\tilde{p}}^0(\tilde{\tria}^\delta)$ with
\be \label{p4}
 \|\eta-\eta^\delta\|_{1,\kappa} \lesssim \tfrac{h_\delta \kappa}{\tilde{p}} (\|\phi\|_{L_2(\Omega)}+\kappa^{-1}\|\divv \vec{u}\|_{L_2(\Omega)})
 \lesssim  \tfrac{1}{\tilde{p}} \|(\phi,\vec{u})\|_{U_1},
\ee
\new{by $\kappa h_\delta \lesssim 1$.}

\new{{\bf (II)}} To construct a similarly accurate approximation $\vec{v}^\delta \in RT_{\tilde{p}}(\tilde{\tria}^\delta)$ in $\|\cdot\|_{\divv,\kappa}$-norm to $\vec{v}=\kappa \nabla \eta -\vec{u}$, a complication is the boundary condition incorporated in the definition of $V^\delta_{\mp}$. The pair $(\eta^\delta,\vec{v}^\delta)$ 
has to satisfy $(\vec{v}^\delta\cdot \vec{n} \mp i \eta^\delta)|_{\partial\Omega}=0$.

\new{Knowing that the range of the normal trace operator restricted to $\RT_{\tilde{p}}(\tilde{\tria}^\delta)$ contains the range of the trace operator restricted to $\cS^0_{\tilde{p}}(\tilde{\tria}^\delta)$, we can pick some $\vec{z}^\delta\in \RT_{\tilde{p}}(\tilde{\tria}^\delta)$ with $(\vec{z}^\delta\cdot\vec{n} \mp i \eta^\delta)|_{\partial\Omega}=0$.

From $(\vec{v}\cdot\vec{n} \mp i \eta)|_{\partial\Omega}=0$, we have $(\vec{v}-\vec{z}^\delta)\cdot\vec{n}|_{\partial\Omega}=\pm i (\eta-\eta^\delta)|_{\partial\Omega}$, and so 
\be \label{p6}
\|(\vec{v}-\vec{z}^\delta)\cdot\vec{n}|_{\partial\Omega}\|_{-\frac12,\kappa}=
\|(\eta -\eta^\delta)|_{\partial\Omega}\|_{-\frac12,\kappa}  \lesssim \| (\eta -\eta^\delta)|_{\partial\Omega}\|_{\frac12,\kappa} \lesssim 
\|\eta -\eta^\delta\|_{1,\kappa}
\ee
by \eqref{bdrnorms}.
The definition of $\|\cdot\|_{-\frac12,\kappa}$ shows the existence of a $\vec{w} \in H(\divv;\Omega)$ with 
\be \label{p7}
\vec{w}\cdot\vec{n}|_{\partial\Omega}=(\vec{v}-\vec{z}^\delta)\cdot\vec{n}|_{\partial\Omega},\quad
\|\vec{w}\|_{\divv,\kappa} \lesssim \|(\vec{v}-\vec{z}^\delta)\cdot\vec{n}|_{\partial\Omega}\|_{-\frac12,\kappa}.
\ee
Using that $(\vec{v}-\vec{z}^\delta-\vec{w}) \in H_0(\divv;\Omega)$, with $Q^\delta\colon H_0(\divv;\Omega) \rightarrow \RT_{\tilde{p}}(\tilde{\tria}^\delta) \cap H_0(\divv;\Omega)$ being the projector constructed in \cite{70.991}
we select
$$
\vec{v}^\delta :=Q^\delta (\vec{v}-\vec{z}^\delta-\vec{w})+\vec{z}^\delta.
$$
and conclude that $(\eta^\delta, \vec{v}^\delta) \in V_{\mp}^\delta$.
What remains is to bound $\|\vec{v}-\vec{v}^\delta\|_{\divv,\kappa}$.}

\new{{\bf (III)}}  The projector $Q^\delta$ satisfies the commuting diagram property $\divv Q^\delta=\Pi^\delta \divv$, where $\Pi^\delta$ is the $L_2(\Omega)$-orthogonal projector onto $\cS^{-1}_{\tilde{p}}(\tilde{\tria}^\delta)$, and for $K \in \tilde{\tria}^\delta$, \new{$\vec{q} \in H_0(\divv;\Omega)$, it holds that}
\be \label{Voh}
\begin{split}
&\|(\identity-Q^\delta)\vec{q}\|_{L_2(K)}^2 \lesssim\\
&\hspace*{-0.5cm} 
\sum_{\{K'\in \tilde{\tria}^\delta\colon \overline{K} \cap \overline{K'} \neq \emptyset\}}
\hspace*{-2em}\min_{\mbox{}\hspace*{2em}\vec{q}_K \in \RT_{\tilde{p}}(K')} \hspace*{-0.3em}\|\vec{q}-\vec{q}_K\|_{L_2(K')}^2\!+\!\big(\tfrac{\diam(K)}{\tilde{p}}\big)^2 \|(\identity-\Pi^\delta)\divv \vec{q}\|^2_{L_2(K')}.
\end{split}
\ee
We infer that
\begin{align*}
\kappa^{-1}\|\divv(\vec{v}-\vec{v}^\delta)\|_{L_2(\Omega)}&=
\kappa^{-1}\|\divv(\vec{v}-Q^\delta (\vec{v}-\vec{z}^\delta-\vec{w})-\vec{z}^\delta)\|_{L_2(\Omega)}\\
&=
\kappa^{-1}\|(\identity-\Pi^\delta)\divv \vec{v}+(\Pi^\delta-\identity)\divv \vec{z}^\delta+\Pi^\delta\divv \vec{w}\|_{L_2(\Omega)}
\\
&=
\kappa^{-1}\|(\identity-\Pi^\delta)\divv \vec{v}+\Pi^\delta\divv \vec{w}\|_{L_2(\Omega)}.
\end{align*}
From \eqref{p6}-\eqref{p7} we know that 
$$
\kappa^{-1}\|\Pi^\delta\divv \vec{w}\|_{L_2(\Omega)} \leq \|\vec{w}\|_{\divv,\kappa} \lesssim \|\eta -\eta^\delta\|_{1,\kappa}.
$$
Using that $\kappa^{-1} \divv \vec{v}=-\eta-\phi$, $\cS^{0}_{\tilde{p}}(\tilde{\tria}^\delta) \subset \cS^{-1}_{\tilde{p}}(\tilde{\tria}^\delta)$, and $\max_{K \in \tilde{\tria}^\delta} \diam(K) \lesssim h_\delta$, we have
\be \label{ee1}
\begin{split}
\kappa^{-1}\|(\identity&-\Pi^\delta)\divv \vec{v}\|_{L_2(\Omega)} \leq \|(\identity-\Pi^\delta)\eta\|_{L_2(\Omega)}+\|(\identity-\Pi^\delta)\phi\|_{L_2(\Omega)}\\
&\leq \|\eta -\eta^\delta\|_{L_2(\Omega)}+\|(\identity-\Pi^\delta)\phi\|_{L_2(\Omega)}\\
&\lesssim \|\eta -\eta^\delta\|_{L_2(\Omega)}+\tfrac{h_\delta}{\tilde{p}}|\phi|_{H^1(\Omega)} \leq \|\eta -\eta^\delta\|_{1,\kappa}+\tfrac{1}{\tilde{p}} \|(\phi,\vec{u})\|_{U_1}.
\end{split}
\ee
\new{where a proof of the inequality ``$\lesssim$'' can be deduced from e.g.~\cite[Thm.~3.1]{18.52}.}

Using \eqref{p4}, we conclude that
\be \label{p10}
\kappa^{-1}\|\divv(\vec{v}-\vec{v}^\delta)\|_{L_2(\Omega)} \lesssim \tfrac{1}{\tilde{p}} \|(\phi,\vec{u})\|_{U_1}.
\ee

\new{{\bf (IV)}} What remains is to show such an upper bound for  $\|\vec{v}-\vec{v}^\delta\|_{L_2(\Omega)}$. 
We write $\vec{v}-\vec{v}^\delta=(\identity-Q^\delta)(\vec{v}-\vec{z}^\delta-\vec{w})+\vec{w}$.
It holds that
\be \label{ee2}
\|\vec{w}\|_{L_2(\Omega)} \leq \|\vec{w}\|_{\divv,\kappa} \lesssim \|\eta -\eta^\delta\|_{1,\kappa} \lesssim \tfrac{1}{\tilde{p}} \|(\phi,\vec{u})\|_{U_1}
\ee
again by \eqref{p4}. Using \eqref{Voh} we have
\begin{align} \nonumber
\|(&\identity-Q^\delta)(\vec{v}-\vec{z}^\delta-\vec{w})\|_{L_2(\Omega)}
\lesssim \tfrac{h_\delta}{\tilde{p}} \|(\identity-\Pi^\delta)\divv(\vec{v}-\vec{z}^\delta-\vec{w})\|_{L_2(\Omega)}\\  \nonumber
&\hspace*{4cm}+
\sqrt{\sum_{K \in \tilde{\tria}^\delta} \min_{\vec{q}_K \in \RT_{\tilde{p}}(K)}\| \vec{v}-\vec{z}^\delta-\vec{w}-\vec{q}_K\|^2_{L_2(K)}}\\  \nonumber
&=\tfrac{h_\delta}{\tilde{p}} \|(\identity-\Pi^\delta)\divv(\vec{v}-\vec{w})\|_{L_2(\Omega)}+
\sqrt{\sum_{K \in \tilde{\tria}^\delta} \min_{\vec{q}_K \in \RT_{\tilde{p}}(K)}\| \vec{v}-\vec{w}-\vec{q}_K\|^2_{L_2(K)}}\\ \label{ee3}
&\leq
\tfrac{h_\delta}{\tilde{p}} \|\divv \vec{w}\|_{L_2(\Omega)}+\|\vec{w}\|_{L_2(\Omega)}
+
\tfrac{h_\delta}{\tilde{p}} \|(\identity-\Pi^\delta)\divv \vec{v}\|_{L_2(\Omega)}+
\\ \nonumber
&\hspace*{5cm} \min_{\vec{q}^\delta \in \cS^{-1}_{\tilde{p}}(\tilde{\tria}^\delta)^d} \|\kappa^{-1} \nabla \eta -\vec{u}-\vec{q}^\delta\|_{L_2(\Omega)^d}\\ \label{ee4}
&\lesssim
\|\vec{w}\|_{\divv,\kappa} + \kappa^{-1}\|(\identity-\Pi^\delta)\divv \vec{v}\|_{L_2(\Omega)}+\kappa^{-1}|\eta-\eta^\delta|_{H^1(\Omega)}+\tfrac{h_\delta}{\tilde{p}}|\vec{u}|_{H^1(\Omega)}\\ \label{ee5}
&\lesssim  \|\eta -\eta^\delta\|_{1,\kappa}+\tfrac{1}{\tilde{p}} \|(\phi,\vec{u})\|_{U_1} \\ \label{ee6}
&\lesssim \tfrac{1}{\tilde{p}} \|(\phi,\vec{u})\|_{U_1},
\end{align}
where for \eqref{ee3} we have used that $\vec{v}=\kappa^{-1} \nabla \new{\eta}-\vec{u}$,
for  \eqref{ee4} that $\tfrac{h_\delta}{\tilde{p}} \leq h_\delta \lesssim \kappa^{-1}$ and $\nabla \cS^{0}_{\tilde{p}}(\tilde{\tria}^\delta) \subset \cS^{-1}_{\tilde{p}}(\tilde{\tria}^\delta)^d$,
for  \eqref{ee5} equations \eqref{ee2} and \eqref{ee1}, and
for  \eqref{ee6} equation \eqref{p4}.
We conclude that $\|\vec{v}-\vec{v}^\delta\|_{L_2(\Omega)} \lesssim \tfrac{1}{\tilde{p}} \|(\phi,\vec{u})\|_{U_1}$, which together with \eqref{p10} and \eqref{p4} shows \new{\eqref{p5}, and} thus completes the proof.
\end{proof}


\

\section{Comparison with standard Galerkin method} \label{sec:assessmentGalerkin}
The operator $L_\kappa$ that corresponds to the standard variational formulation of the Helmholtz equation is in $\Lis(H^1_{0,\Gamma_D}(\Omega),H^1_{0,\Gamma_D}(\Omega)')$ but it is, except for $\kappa$ small enough, not coercive. 
Nevertheless, in \cite{202.8,71.5} it was shown that for $\partial\Omega=\Gamma_R$, $\Omega$ a polygon, 
$\tria^\delta$ a quasi-uniform shape regular triangulation with mesh-size $h_\delta$, with additional refinements towards the vertices (which do not add to the order of complexity of $\tria^\delta$), and $\frac{\kappa h_\delta}{p}$ and $\frac{\log \kappa}{p}$ sufficiently small,  the Galerkin solution $\phi_{\text{Gal}}^\delta$ from
$$
X^\delta:=\cS_p^0(\tria^\delta) \cap H^1_{0,\Gamma_D}(\Omega)
$$
is quasi-best  w.r.t.~$\|\cdot\|_{1,\kappa}$. 
This means that the error in $\phi_{\text{Gal}}^\delta$ is at most a constant factor larger, only dependent on the shape regularity of $\tria^\delta$, than the error in the best approximation from $X^\delta$.
In other cases the Galerkin solution is observed \emph{not} to be quasi-best, a phenomenon  known as pollution. 

Notice that for $\vec{f}_2=0$ and $g_D=0$, for $\phi \in H^1_{0,\Gamma_D}(\Omega)$ being the solution of \eqref{2nd} it holds that $(\phi,\vec{u})$ with $\vec{u}=\frac{1}{\kappa}\nabla\phi$ is the solution of \eqref{eq:ultra-weak},
and that $\|\phi\|_{1,\kappa}=\|(\phi,\vec{u})\|_U$. So for trial spaces $X^\delta \subset H^1_{0,\Gamma_D}(\Omega)$ and $U^\delta \subset U$ that have comparable
orders it appropriate to compare the norm $\|\phi-\phi^\delta_{\rm Gal}\|_{1,\kappa}$ of the error in the Galerkin solution with the norm $\|(\phi,\vec{u})-(\phi^\delta,\vec{u}^\delta)\|_U$ of the error in the FOSLS solution.

Instead of comparing the errors in Galerkin and FOSLS approximations for some picks of the forcing term, we would like to compare how the errors in Galerkin and FOSLS approximations \emph{can}, and for some forcing terms \emph{will} deviate from the errors in the best approximations from the selected finite element spaces in $\|\cdot\|_{1,\kappa}$- and $\|\cdot\|_U$-norms respectively.
As we have seen in Theorem~\ref{thm1}, for the FOSLS approximation the attainable maximal deviation factor is given be the computable quantity $\frac{1}{\gamma_\kappa^\delta}$.
Our goal in the present section is to find a computable similar quantity for the Galerkin method.

\subsection{The pollution factor}
Knowing that $L_\kappa$ is boundedly invertible, $\|\cdot\|_{{\rm opt},\kappa}:=  \sup_{0 \neq \zeta \in H^1_{0,\Gamma_D}(\Omega)}\frac{|(L_\kappa' \cdot)(\zeta)|}{\|\zeta\|_{1,\kappa}}$ is a norm on $H^1_{0,\Gamma_D}(\Omega)$, known as the optimal test norm.
When $H^1_{0,\Gamma_D}(\Omega)'$ is equipped with the corresponding dual norm, and $H^1_{0,\Gamma_D}(\Omega)$ by $\|\cdot\|_{1,\kappa}$,
then $L_\kappa$ is an isometry.
Consequently, as follows from \cite[Remark 3.2]{249.99} or \cite[Sect.~2.1]{258.4}, with
$$
\gamma_{\kappa,{\rm Gal}}^\delta:=\inf_{0 \neq \phi^\delta \in X^\delta} \sup_{0 \neq \eta^\delta \in X^\delta} \frac{|(L_\kappa \phi^\delta)(\eta^\delta)|}{\|\phi^\delta\|_{1,\kappa}\|\eta^\delta\|_{\text{opt},\kappa}},
$$
it holds that
$$
\sup_{\phi \in H^1_{0,\Gamma_D}\setminus X^\delta} 
\frac{\|\phi-\phi_{\rm Gal}^\delta\|_{1,\kappa}}
{\inf_{\zeta^\delta \in X^\delta}\|\phi-\zeta^\delta\|_{1,\kappa}}=\frac{1}{\gamma_{\kappa,{\rm Gal}}^\delta}.\,\, \footnotemark
$$
\footnotetext{\cite[Thm.~2]{315.7} gives ``$\leq$''.}
We will call $1/\gamma_{\kappa,{\rm Gal}}^\delta$ the \emph{pollution factor of the Galerkin method}, and correspondingly, $1/\gamma_{\kappa}^\delta$ from Theorem~\ref{thm1} the \emph{pollution factor of the FOSLS method}.

Because of the presence of $\|\eta^\delta\|_{\text{opt},\kappa}$ in its definition, the constant $\gamma_{\kappa,{\rm Gal}}^\delta$ is, however, not computable.
For a suitable finite dimensional subspace $Y^\delta \subset H^1_{0,\Gamma_D}(\Omega)$, in the expression for $\gamma_{\kappa,{\rm Gal}}^\delta$ we will therefore replace
$\|\eta^\delta\|_{\text{opt},\kappa}$ by the computable quantity $\sup_{0 \neq \zeta^\delta \in Y^\delta} \frac{|(L_\kappa' \eta^\delta)(\zeta^\delta)|}{\|\zeta^\delta\|_{1,\kappa}}$. 

\begin{theorem} \label{thm:gal} Let either $\kappa h_{\delta,\min}$ be sufficiently large, or $\kappa h_{\delta,\max} \lesssim 1$.\footnote{Together both cases cover all possible combinations of quasi-uniform partitions and $\kappa \geq \kappa_0>0$.} Then
with $Y^\delta:=H^1_{0,\Gamma_D}(\Omega) \cap \prod_{K \in \tria^\delta} P_{p+d+1}(K)$, it holds that
$$
\|\cdot\|_{{\rm opt},\kappa} \lesssim \sup_{0 \neq \zeta^\delta \in Y^\delta} \frac{|(L_\kappa' \cdot)(\zeta^\delta)|}{\|\zeta^\delta\|_{1,\kappa}} \,\leq \|\cdot\|_{{\rm opt},\kappa} \quad \text{on } X^\delta,
$$
and so
$$
\gamma_{\kappa,{\rm Gal}}^\delta \leq\,\, \hat{\gamma}_{\kappa,{\rm Gal}}^{\delta}:=\inf_{0 \neq \phi^\delta \in X^\delta} \sup_{0 \neq \eta^\delta \in X^\delta} \frac{|(L_\kappa \phi^\delta)(\eta^\delta)|}{\|\phi^\delta\|_{1,\kappa}
\sup_{0 \neq \zeta^\delta \in Y^\delta} \frac{|(L_\kappa' \eta^\delta)(\zeta^\delta)|}{\|\zeta^\delta\|_{1,\kappa}}} \,\,\lesssim \gamma_{\kappa,{\rm Gal}}^\delta,\footnotemark
$$
\new{only dependent on $d$, the shape regularity of $\tria^\delta$, and here also on $p$.}
\footnotetext{So the true pollution factor of the Galerkin method $1/\gamma_{\kappa,{\rm Gal}}^\delta$ is greater than or equal to $1/\hat{\gamma}_{\kappa,{\rm Gal}}^{\delta}$.
Numerical results will indicate that $\hat{\gamma}_{\kappa,{\rm Gal}}^{\delta}$ is very close to $\gamma_{\kappa,{\rm Gal}}^{\delta}$.}
\end{theorem}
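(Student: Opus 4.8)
The upper inequality $\sup_{0\neq\zeta^\delta\in Y^\delta}|(L'_\kappa\cdot)(\zeta^\delta)|/\|\zeta^\delta\|_{1,\kappa}\le\|\cdot\|_{{\rm opt},\kappa}$ on $X^\delta$ is trivial, since $Y^\delta\subseteq H^1_{0,\Gamma_D}(\Omega)$, and, granted the asserted two-sided bound, the chain $\gamma_{\kappa,{\rm Gal}}^\delta\le\hat\gamma_{\kappa,{\rm Gal}}^\delta\lesssim\gamma_{\kappa,{\rm Gal}}^\delta$ follows by inserting it into the two definitions. So the plan is to prove the lower bound $\|\eta^\delta\|_{{\rm opt},\kappa}\lesssim\sup_{0\neq\zeta^\delta\in Y^\delta}|a_\kappa(\eta^\delta,\zeta^\delta)|/\|\zeta^\delta\|_{1,\kappa}$ for $\eta^\delta\in X^\delta$, where $a_\kappa(\eta,\zeta):=(L'_\kappa\eta)(\zeta)=\tfrac1{\kappa^2}\langle\nabla\eta,\nabla\zeta\rangle_{L_2(\Omega)}-\langle\eta,\zeta\rangle_{L_2(\Omega)}\mp\tfrac i\kappa\langle\eta,\zeta\rangle_{L_2(\Gamma_R)}$, and I would treat the two regimes separately, using that $\tria^\delta$ is quasi-uniform so that $h_\delta:=h_{\delta,\min}\eqsim h_{\delta,\max}$.

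In the regime $\kappa h_{\delta,\min}$ sufficiently large it suffices to use $X^\delta\subseteq Y^\delta$ and to test with $\zeta^\delta:=\eta^\delta$. A $p$-dependent inverse inequality and a $p$-dependent discrete trace inequality give $\tfrac1\kappa|\eta^\delta|_{H^1(\Omega)}\lesssim\|\eta^\delta\|_{L_2(\Omega)}$ and $\tfrac1{\sqrt\kappa}\|\eta^\delta\|_{L_2(\Gamma_R)}\lesssim\|\eta^\delta\|_{L_2(\Omega)}$ as soon as $\kappa h_{\delta,\min}\gtrsim1$; combining this with the multiplicative trace inequality $\tfrac1{\sqrt\kappa}\|\zeta\|_{L_2(\Gamma_R)}\lesssim\|\zeta\|_{1,\kappa}$ for the test argument shows $\|\eta^\delta\|_{{\rm opt},\kappa}\lesssim\|\eta^\delta\|_{L_2(\Omega)}$ and $\|\eta^\delta\|_{1,\kappa}\eqsim\|\eta^\delta\|_{L_2(\Omega)}$. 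On the other hand $\Re a_\kappa(\eta^\delta,\eta^\delta)=\tfrac1{\kappa^2}|\eta^\delta|_{H^1(\Omega)}^2-\|\eta^\delta\|_{L_2(\Omega)}^2\le-\tfrac12\|\eta^\delta\|_{L_2(\Omega)}^2$ once $\kappa h_{\delta,\min}$ exceeds a ($p$-dependent) multiple of the inverse-inequality constant, so $\sup_{0\neq\zeta^\delta\in Y^\delta}|a_\kappa(\eta^\delta,\zeta^\delta)|/\|\zeta^\delta\|_{1,\kappa}\ge|a_\kappa(\eta^\delta,\eta^\delta)|/\|\eta^\delta\|_{1,\kappa}\gtrsim\|\eta^\delta\|_{L_2(\Omega)}\gtrsim\|\eta^\delta\|_{{\rm opt},\kappa}$.

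For the regime $\kappa h_{\delta,\max}\lesssim1$ — the substantial case — I would construct a Fortin interpolator $\Pi^\delta\colon H^1_{0,\Gamma_D}(\Omega)\to Y^\delta$ with (a) $a_\kappa(\eta^\delta,(\identity-\Pi^\delta)\psi)=0$ for all $\eta^\delta\in X^\delta$ and $\psi\in H^1_{0,\Gamma_D}(\Omega)$, and (b) $\|\Pi^\delta\psi\|_{1,\kappa}\lesssim\|\psi\|_{1,\kappa}$; then the lower bound follows from $\sup_{0\neq\zeta^\delta\in Y^\delta}|a_\kappa(\eta^\delta,\zeta^\delta)|/\|\zeta^\delta\|_{1,\kappa}\ge\sup_{0\neq\psi}|a_\kappa(\eta^\delta,\Pi^\delta\psi)|/\|\Pi^\delta\psi\|_{1,\kappa}\gtrsim\sup_{0\neq\psi}|a_\kappa(\eta^\delta,\psi)|/\|\psi\|_{1,\kappa}=\|\eta^\delta\|_{{\rm opt},\kappa}$. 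For (a), elementwise integration by parts of the $\tfrac1{\kappa^2}\langle\nabla\eta^\delta,\nabla\cdot\rangle$-term turns $a_\kappa(\eta^\delta,\psi)$ into a sum of $L_2(K)$-pairings of $\psi$ with degree-$\le p$ polynomials (the elementwise $\Delta\eta^\delta$ and $\eta^\delta$), $L_2(F)$-pairings with degree-$\le p-1$ polynomials over interior and $\Gamma_N$-facets ($[\![\partial_{\vec{n}}\eta^\delta]\!]$, resp.\ $\partial_{\vec{n}}\eta^\delta$), and $L_2(F)$-pairings with degree-$\le p$ polynomials over $\Gamma_R$-facets ($\partial_{\vec{n}}\eta^\delta$ together with the $\tfrac i\kappa$-term); the $\Gamma_D$-facet contributions drop out because $\psi|_{\Gamma_D}=0$. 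Thus it suffices that $(\identity-\Pi^\delta)\psi$ be $L_2(K)$-orthogonal to $P_p(K)$ on every element, $L_2(F)$-orthogonal to $P_{p-1}(F)$ on every interior/$\Gamma_N$-facet, and $L_2(F)$-orthogonal to $P_p(F)$ on every $\Gamma_R$-facet. I would realise $\Pi^\delta$ as a Scott--Zhang-type quasi-interpolant $\Pi_1^\delta$ onto $\cS_p^0(\tria^\delta)\cap H^1_{0,\Gamma_D}(\Omega)\subseteq Y^\delta$, corrected locally: first, for each facet $F$, add $c_F$ from the span of the degree-$(p+d+1)$ facet-bubble basis functions attached to $F$ (there are enough of them, and $(\mu,r)\mapsto\langle b_{\partial F}\mu,r\rangle_F$ is non-degenerate in $r\in P_p(F)$, to match the prescribed $F$-moments); then, since $c_F$ has zero trace on facets $\neq F$, add $c_K\in b_K P_p(K)$ to match the prescribed volume moments of $K$ without disturbing any facet moment. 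Inverse estimates on the ($p$-dependent) weighted mass matrices bound each correction in $L_2$ by a constant times the local $L_2$-norm of the residual $\psi-\Pi_1^\delta\psi$, hence by $\lesssim h_\delta|\psi|_{H^1}$ on a patch, and in $H^1$-seminorm by $\lesssim h_\delta^{-1}$ times that, hence by $\lesssim|\psi|_{H^1}$ on a patch; summing gives $\|\Pi^\delta\psi\|_{L_2(\Omega)}\lesssim\|\psi\|_{L_2(\Omega)}+h_\delta|\psi|_{H^1(\Omega)}$ and $|\Pi^\delta\psi|_{H^1(\Omega)}\lesssim|\psi|_{H^1(\Omega)}$, so $\|\Pi^\delta\psi\|_{1,\kappa}^2\lesssim\|\psi\|_{L_2(\Omega)}^2+h_\delta^2|\psi|_{H^1(\Omega)}^2+\kappa^{-2}|\psi|_{H^1(\Omega)}^2$. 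Here $h_\delta^2\lesssim\kappa^{-2}$ by hypothesis, absorbing the middle term and giving (b). This absorption is exactly why the proof splits into two regimes: the quasi-interpolation error $h_\delta|\psi|_{H^1}$ seeps into the $L_2$-part of $\|\Pi^\delta\psi\|_{1,\kappa}$ and is controllable by $\|\psi\|_{1,\kappa}$ only when $\kappa h_\delta\lesssim1$, whereas for $\kappa h_{\delta,\min}$ large no Fortin operator is needed at all.

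The main obstacle I anticipate is the bookkeeping in the Fortin construction: arranging the local corrections so that $\Pi^\delta\psi$ is genuinely an element of $Y^\delta$ (an $H^1$-conforming function, not merely a piecewise polynomial), so that each correction leaves the already-matched moments intact, and with $h_\delta$- and $p$-explicit stability constants — together with verifying that the facet- and interior-bubble subspaces of $\cS_{p+d+1}^0(\tria^\delta)$ carry at least as many degrees of freedom as the moment conditions being imposed, which is precisely the counting that forces the polynomial degree $p+d+1$ and the $p$-dependence of all the constants.
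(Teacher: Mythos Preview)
Your proposal is correct and follows the same two-regime strategy as the paper. For the resolved case $\kappa h_{\delta,\max}\lesssim 1$ the argument is essentially identical: a Fortin interpolator built from a Scott--Zhang quasi-interpolant, corrected first by facet bubbles and then by interior bubbles so as to match the $L_2(K)$- and $L_2(e)$-moments arising from elementwise integration by parts of $(L_\kappa'\eta^\delta)(\cdot)$, with $\|\cdot\|_{1,\kappa}$-stability following from the local approximation estimates together with $h_\delta\lesssim\kappa^{-1}$. (The paper imposes $P_p(e)$-orthogonality uniformly on all non-Dirichlet facets rather than distinguishing $P_{p-1}$ on interior/$\Gamma_N$-facets from $P_p$ on $\Gamma_R$-facets as you do; this over-specification is harmless.)

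The one place you differ is the unresolved regime $\kappa h_{\delta,\min}$ large. You exploit $X^\delta\subset Y^\delta$ and test with $\zeta^\delta=\eta^\delta$ itself, using that $\Re a_\kappa(\eta^\delta,\eta^\delta)=\kappa^{-2}|\eta^\delta|_{H^1}^2-\|\eta^\delta\|_{L_2}^2$ is $\le -\tfrac12\|\eta^\delta\|_{L_2}^2$ by the inverse inequality. The paper instead tests against elementwise bubbles $\zeta^\delta\in\tilde Y^\delta:=\prod_K P_{p+d+1}(K)\cap H^1_0(K)$, for which the $\Gamma_R$-term vanishes outright, and then perturbs the $L_2$ inf-sup by $\kappa^{-2}\langle\nabla\eta^\delta,\nabla\zeta^\delta\rangle$ using inverse inequalities on both factors. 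Your variant is slightly shorter; both reduce to the same inverse-inequality mechanism and yield the same $p$-dependent constants.
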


\begin{proof} {\bf (I)} The case of $\kappa h_{\delta,\min}$ being sufficiently large. 

\new{As a consequence of \eqref{81}}, for $\tilde{Y}^\delta := \prod_{K \in \tria^\delta} P_{p+d+1}(K) \cap H^1_0(K) \subset H^1_{0,\Gamma_D}(\Omega)$, it holds that
$$
\inf_{0 \neq \eta^\delta \in X^\delta} \sup_{0 \neq \zeta^\delta \in \tilde{Y}^\delta} \frac{\big|\langle \eta^\delta,\zeta^\delta\rangle_{L_2(\Omega)}\big|}{\|\eta^\delta\|_{L_2(\Omega)}\|\zeta^\delta\|_{L_2(\Omega)}}\gtrsim 1.
$$
Application of inverse inequalities for $\eta^\delta \in X^\delta$ and $\zeta^\delta \in \tilde{Y}^\delta$ gives
$$
\big|(L'_\kappa \eta^\delta)(\zeta^\delta)-\langle \eta^\delta,\zeta^\delta\rangle_{L_2(\Omega)}\big| \lesssim (\kappa h_{\delta,\min})^{-2} \|\eta^\delta\|_{L_2(\Omega)}\|\zeta^\delta\|_{L_2(\Omega)},
$$
and so for $\kappa h_{\delta,\min}$ sufficiently large,
\be \label{q3}
\inf_{0 \neq \eta^\delta \in X^\delta} \sup_{0 \neq \zeta^\delta \in \tilde{Y}^\delta} \frac{\big|(L'_\kappa \eta^\delta)(\zeta^\delta)\big|}{\|\eta^\delta\|_{L_2(\Omega)}\|\zeta^\delta\|_{L_2(\Omega)}}\gtrsim 1.
\ee

An application of the trace inequality in the form $\|\cdot\|^2_{L_2(\partial\Omega)} \lesssim \kappa \|\cdot\|^2_{L_2(\Omega)} + \kappa^{-1} |\cdot|^2_{H^1(\Omega)}$ shows that for $\eta,\zeta \in H^1_{0,\Gamma_D}(\Omega)$, $|(L_\kappa' \eta)(\zeta)| \lesssim \|\eta\|_{1,\kappa} \|\zeta\|_{1,\kappa}$, and
so $\|\cdot\|_{{\rm opt},\kappa} \lesssim \|\cdot\|_{1,\kappa}$.
Together with an inverse inequality,
thanks to $\kappa h_{\delta,\min} \gtrsim 1$ it gives that $\|\eta^\delta\|_{{\rm opt},\kappa} \lesssim \|\eta^\delta\|_{1,\kappa} \lesssim \|\eta^\delta\|_{L_2(\Omega)}$ ($\eta^\delta \in X^\delta$).
Again by $\kappa h_{\delta,\min} \gtrsim 1$, an inverse inequality shows that $\|\zeta^\delta\|_{1,\kappa} \lesssim \|\zeta^\delta\|_{L_2(\Omega)}$ ($\zeta^\delta \in \tilde{Y}^\delta$).  Together with \eqref{q3} the latter two estimates complete the proof of Theorem~\ref{thm:gal} for the case that $\kappa h_{\delta,\min}$ is sufficiently large.
\smallskip

{\bf (II)} The case $\kappa h_{\delta,\max} \lesssim 1$.

Let both $\Gamma_D$ and  $\Gamma_R$ be the union of (closed) facets $e$ of $\tria^\delta$.
Let $\cF(\tria^\delta)$ denote the set of facets $e$ of $\tria^\delta$ that are not on $\Gamma_D$.
For $K \in \tria^\delta$, we set the patches $\omega_{K,0}(\tria^\delta):=K$, and $\omega_{K,i+1}(\tria^\delta):=\cup\{K' \in \tria^\delta\colon K' \cap \omega_{K,i}(\tria^\delta) \neq \emptyset\}$. Let $\underline{h}_\delta$ be the piecewise constant function on $\Omega$ defined by $\underline{h}_\delta|_K:=|K|^{1/d}$.

It suffices to construct a `Fortin' interpolator $Q^\delta\colon H^1_{0,\Gamma_D}(\Omega) \rightarrow Y^\delta$ with
\be \label{q5}
\sup_{0\neq \eta \in H^1_{0,\Gamma_D}(\Omega)} \frac{\|Q^\delta \eta\|_{1,\kappa}}{\|\eta\|_{1,\kappa}}\lesssim 1,
\quad (L_\kappa' X^\delta)(\ran (\identity -Q^\delta))=0.
\ee
Since for $\phi^\delta \in X^\delta$ and $\eta \in H^1_{0,\Gamma_D}(\Omega)$,
$$
(L_\kappa' \phi^\delta)(\eta)=\sum_{K \in \tria^\delta} \big\{\int_K (-\kappa^{-2} \Delta\phi^\delta-\phi^\delta) \overline{\eta}\,dx+\new{\kappa^{-2}}\int_{\partial K} \tfrac{\partial \phi^\delta}{\partial\vec{n}} \overline{\eta}\big\} \mp \tfrac{i}{\kappa} \int_{\Gamma_R} \phi^\delta \overline{\eta}\,ds,
$$
the second condition holds when for all $K \in \tria^\delta$ and $e \in \cF(\tria^\delta)$
\be \label{q4}
\ran (\identity - Q^\delta)|_{K} \perp_{L_2(K)} P_p(K),\quad \ran (\identity - Q^\delta)|_{e} \perp_{L_2(e)} P_p(e).
\ee

Let $Q^\delta_{-2}\colon H^1_{0,\Gamma_D}(\Omega) \rightarrow S^0_1(\tria^\delta) \cap H^1_{0,\Gamma_D}(\Omega)$ denote the familiar Scott-Zhang interpolator (\cite{247.2}). It satisfies 
$$
\|\underline{h}_\delta^{-1}(\identity-Q^\delta_{-2})v\|_{L_2(K)} + |Q^\delta_{-2} v|_{H^1(K)} \lesssim |v|_{H^1(\omega_{K,1}(\tria^\delta))} \quad(v \in H^1_{0,\Gamma_D}(\Omega) ).
$$
Thanks to $\kappa h_{\max}^\delta \lesssim 1$, \new{this estimate implies that for $v \in H^1_{0,\Gamma_D}(\Omega)$,
$$
\|Q_{-2}^\delta v\|_{1,\kappa} \lesssim \|v\|_{L_2(\Omega)}+ \|(\identity -Q_{-2}^\delta)v\|_{L_2(\Omega)}+\kappa^{-1}|Q_{-2}^\delta v|_{H^1(\Omega)} \lesssim \|v\|_{1,\kappa},
$$
so that $Q_{-2}^\delta$ satisfies the first condition in \eqref{q5}.} In two steps we will construct a modified interpolator that also satisfies \eqref{q4}.

On a facet $\hat{e}$ of a reference $d$-simplex $\hat{K}$, let $b_{\hat{e}}$ denote the $d$-fold product of its barycentric coordinates.
From $\int_{\hat{e}} b_{\hat{e}} |q|^2\,ds \eqsim \int_{\hat{e}} |q|^2\,ds$ ($q \in P_p(\hat{e})$), and $b_{\hat{e}} P_p(\hat{e})=P_{p+d}(\hat{e}) \cap H^1_0(\hat{e})$, one infers that there exist bases $\{\hat{\tilde{\psi}}_i\}_i$ and $\{\hat{\ell}_i\}_i$ of $P_{p+d}(\hat{e}) \cap H^1_0(\hat{e})$ and $P_p(\hat{e})$ that are $L_2(\hat{e})$-biorthogonal. Let $\hat{\psi}_i$ be an extension of $\hat{\tilde{\psi}}_i$ to a function in $P_{p+d}(\hat{K}) \cap H^1_{0,\partial \hat{K}\setminus {\rm int}(\hat{e})}(\hat{K})$.

By using affine bijections between $\hat{K}$ and $K \in \tria^\delta$, for each $e \in \cF(\tria^\delta)$ we lift $\{\hat{\ell}_i\}_i$ to a collection $\{\ell_{e,i}\}_i$ that spans $P_p(e)$, and  lift $\{\hat{\psi}_i\}_i$ to a collection $\{\psi_{e,i}\}_i$ of functions on the union of the two (or one) simplices in $\tria^\delta$ of which $e$ is a facet.
We set 
$$
Q^\delta_{-1} v:=Q^\delta_{-2} v+\sum_{e \in  \cF(\tria^\delta)}\sum_i \frac{\langle v-Q^\delta_{-2} v,\ell_{e,i}\rangle_{L_2(e)}}{\langle \psi_{e,i},\ell_{e,i}\rangle_{L_2(e)}}\psi_{e,i}
\quad (v \in H^1_{0,\Gamma_D}(\Omega)).
$$
From $\langle \psi_{e,i},\ell_{e,j}\rangle_{L_2(e)}=0$ when $i \neq j$, it follows that
$\ran (\identity - Q^\delta_{-1})|_{e} \perp_{L_2(e)} P_p(e)$.
Standard homogeneity arguments and the use of the trace inequality show that 
$$
\|\underline{h}_\delta^{-1}(\identity-Q^\delta_{-1})v\|_{L_2(K)} + |Q^\delta_{-1} v|_{H^1(K)} \lesssim |v|_{H^1(\omega_{K,2}(\tria^\delta))} \quad(v \in H^1_{0,\Gamma_D}(\Omega) ).
$$

Let $b_{\hat{K}}$ denote the $(d+1)$-fold product of the barycentric coordinates of $\hat{K}$.
From $\int_{\hat{K}} b_{\hat{K}} |q|^2\,dz \eqsim \int_{\hat{K}} |q|^2\,dx$ ($q \in P_p(\hat{K})$), and $b_{\hat{K}} P_p(\hat{K})=P_{p+d+1}(\hat{K}) \cap H^1_0(\hat{K})$, one infers that there exist bases $\{\hat{\phi}_k\}_k$ and $\{\hat{q}_k\}_k$ of $P_{p+d+1}(\hat{K}) \cap H^1_0(\hat{K})$ and $P_d(\hat{K})$ that are $L_2(\hat{K})$-biorthogonal. 

Again using the affine bijections between $\hat{K}$ and $K \in \tria^\delta$, for each $K \in \tria^\delta$ 
we lift $\{\hat{\phi}_k\}_k$ and $\{\hat{q}_k\}_k$ to collections $\{\phi_{K,k}\}_k$ and $\{q_{K,k}\}_k$ that span $P_{p+d+1}(K) \cap H^1_0(K)$ and $P_p(K)$, respectively.
We set 
$$
Q^\delta v:=Q^\delta_{-1} v+\sum_{K \in  \tria^\delta}\sum_k \frac{\langle v-Q^\delta_{-1} v,q_{K,k}\rangle_{L_2(K)}}{\langle \phi_{K,k},q_{K,k}\rangle_{L_2(K)}}\phi_{K,k}
\quad (v \in H^1_{0,\Gamma_D}(\Omega).
$$
It satisfies \eqref{q4}, as well as 
$$
\|\underline{h}_\delta^{-1}(\identity-Q^\delta)v\|_{L_2(K)} + |Q^\delta v|_{H^1(K)} \lesssim |v|_{H^1(\omega_{K,2}(\tria^\delta))} \quad(v \in H^1_{0,\Gamma_D}(\Omega) ),
$$
\new{which, as we have seen, for $\kappa h_{\max}^\delta \lesssim 1$ } implies the first condition in \eqref{q5} and so completes the proof.
\end{proof}

\subsection{Computation of the pollution factors $1/\gamma_\kappa^\delta$ and $1/\hat{\gamma}_{\kappa,{\rm Gal}}^\delta$ of the FOSLS and Galerkin method}
Let $\{\varphi_1,\varphi_2,\ldots\}$, $\{\psi_1,\psi_2,\ldots\}$ be bases for (finite dimensional) $U^\delta$ and $V_{\mp}^\delta$, respectively.
Define the matrices ${\bf M}^U$, ${\bf M}^{V_{\mp}}$, ${\bf B}$ by
${\bf M}^U_{i j}=\langle \varphi_j,\varphi_i \rangle_U$, 
${\bf M}^{V_{\mp}}_{i j}=\langle B_\kappa' \psi_j,B_\kappa' \psi_i \rangle_U$,
${\bf B}_{i j}=\langle \varphi_j , B_\kappa' \psi_i \rangle_U$.
Then one computes the constant $(\gamma^\delta_\kappa)^2$ as the smallest generalized eigenvalue $\lambda$ of the pencil $\big({\bf B}^H ({\bf M}^{V_{\mp}})^{-1} {\bf B},{\bf M}^U\big)$ (i.e.,
${\bf B}^H ({\bf M}^{V_{\mp}})^{-1} {\bf B} {\bf x}=\lambda {\bf M}^U {\bf x}$ for some vector ${\bf x} \neq 0$).

Similarly, let $\{\varphi_1,\varphi_2,\ldots\}$ and $\{\psi_1,\psi_2,\ldots\}$ be bases for $X^\delta$ and $Y^\delta$.
Define matrices ${\bf M}^X$, ${\bf M}^Y$, ${\bf \tilde{L}}$, and ${\bf L}$ by
${\bf M}^X_{i j}=\langle \varphi_j,\varphi_i \rangle_{1,\kappa}$, 
${\bf M}^Y_{i j}=\langle  \psi_j, \psi_i \rangle_{1,\kappa}$,
${\bf \tilde{L}}_{i j}=(L_\kappa \psi_j)(\varphi_i)$, and
${\bf L}_{i j}=(L_\kappa \varphi_j)(\varphi_i)$.
Then one computes the constant $(\hat{\gamma}_{\kappa,{\rm Gal}}^\delta)^2$ as the smallest generalized eigenvalue of the pencil $\big({\bf L}^H ({\bf \tilde{L}} ({\bf M}^Y)^{-1} {\bf \tilde{L}}^H)^{-1} {\bf L},{\bf M}^X\big)$.

\section{Boosted FOSLS and a posteriori error estimation} \label{sec:boosted}
Recall that our ultra-weak FOSLS solution, given by 
$$
\mathbbm{u}^\delta:=\argmin_{\mathbbm{w}^\delta \in U^\delta} \sup_{0 \neq \mathbbm{z}^\delta \in V^\delta_{\mp}} \frac{|(q- B_\kappa \mathbbm{w}^\delta)(\mathbbm{z}^\delta)|}{\|\mathbbm{z}^\delta\|_{V_{\mp,\kappa}}},
$$ 
can be computed as the second component of $(\mathbbm{v}^\delta,\mathbbm{u}^{\delta}) \in V_{\mp}^\delta \times U^\delta$ that satisfies
\be \label{300}
\left\{\hspace*{-0.5em}
\begin{array}{lcll}
\langle B_\kappa'\mathbbm{v}^\delta, B_\kappa'\undertilde{\mathbbm{v}}^\delta\rangle_U +
\langle \mathbbm{u}^{\delta}, B_\kappa'\undertilde{\mathbbm{v}}^\delta\rangle_U 
& \!\!= \!\!& q(\undertilde{\mathbbm{v}}^\delta) & (\undertilde{\mathbbm{v}}^\delta \in V_{\mp}^\delta),\\
\langle B_\kappa'\mathbbm{v}^\delta,\undertilde{\mathbbm{u}}^\delta\rangle_U
& \!\!=\!\! & 0 & (\undertilde{\mathbbm{u}}^\delta \in U^\delta).
\end{array}
\right.\hspace*{-1.2em}
\ee

Apart from its use to compute $\mathbbm{u}^\delta$, we will demonstrate that $\mathbbm{v}^\delta$ can be employed to construct an improved approximation.
The next theorem shows that $\mathbbm{u}^\delta+B_\kappa'\mathbbm{v}^\delta$ is always more accurate than $\mathbbm{u}^\delta$ (more precisely, never less accurate), and, assuming $\gamma_\kappa^\delta \gtrsim 1$, that it is a quasi-best approximation from $U^\delta+ (B_\kappa' V^\delta_{\new{\mp}} \cap (U^\delta)^{\perp})$.

\begin{theorem} It holds that
\be \label{201}
\|\mathbbm{u}-(\mathbbm{u}^\delta+B_\kappa'\mathbbm{v}^\delta)\|_U^2=\|\mathbbm{u}-\mathbbm{u}^\delta\|_U^2-\|B_\kappa'v^\delta\|_U^2,
\ee
and
$$
\|\mathbbm{u}-(\mathbbm{u}^\delta+B_\kappa'\mathbbm{v}^\delta)\|_U \leq \frac{1}{\gamma_\kappa^\delta} \,\,\inf_{\mathbbm{w}^\delta \in U^\delta+ (B_\kappa' V^\delta_{\new{\mp}} \cap (U^\delta)^{\perp})} \|\mathbbm{u}-\mathbbm{w}^\delta\|_U.
$$
\end{theorem}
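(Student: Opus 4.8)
The plan is to exploit the saddle-point structure of \eqref{300} together with the isometry property $\|B_\kappa\mathbbm{u}\|_{V_{\mp,\kappa}'}=\|\mathbbm{u}\|_U$ established in Section~\ref{sec:ultra-weakFOSLS}. First I would observe that the second equation in \eqref{300} says exactly that $B_\kappa'\mathbbm{v}^\delta\perp_U U^\delta$, so $B_\kappa'\mathbbm{v}^\delta \in B_\kappa'V_\mp^\delta\cap (U^\delta)^\perp$ and hence $\mathbbm{u}^\delta+B_\kappa'\mathbbm{v}^\delta$ lies in the space $W^\delta:=U^\delta+(B_\kappa'V_\mp^\delta\cap(U^\delta)^\perp)$ over which we claim quasi-optimality. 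For the Pythagorean identity \eqref{201}, the key is to show that $\mathbbm{u}-(\mathbbm{u}^\delta+B_\kappa'\mathbbm{v}^\delta)\perp_U B_\kappa'\mathbbm{v}^\delta$; expanding $\|\mathbbm{u}-\mathbbm{u}^\delta\|_U^2 = \|\mathbbm{u}-(\mathbbm{u}^\delta+B_\kappa'\mathbbm{v}^\delta)\|_U^2 + 2\Re\langle \mathbbm{u}-(\mathbbm{u}^\delta+B_\kappa'\mathbbm{v}^\delta),B_\kappa'\mathbbm{v}^\delta\rangle_U + \|B_\kappa'\mathbbm{v}^\delta\|_U^2$, it then suffices to verify the cross term vanishes.

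To see the orthogonality, I would use that $\langle \mathbbm{u},B_\kappa'\undertilde{\mathbbm v}^\delta\rangle_U=(B_\kappa\mathbbm{u})(\undertilde{\mathbbm v}^\delta)=q(\undertilde{\mathbbm v}^\delta)$ for all $\undertilde{\mathbbm v}^\delta\in V_\mp^\delta$ (consistency), so subtracting the first equation of \eqref{300} gives $\langle \mathbbm{u}-\mathbbm{u}^\delta,B_\kappa'\undertilde{\mathbbm v}^\delta\rangle_U=\langle B_\kappa'\mathbbm{v}^\delta,B_\kappa'\undertilde{\mathbbm v}^\delta\rangle_U$ for all $\undertilde{\mathbbm v}^\delta\in V_\mp^\delta$. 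Taking $\undertilde{\mathbbm v}^\delta=\mathbbm{v}^\delta$ yields $\langle \mathbbm{u}-\mathbbm{u}^\delta-B_\kappa'\mathbbm{v}^\delta,B_\kappa'\mathbbm{v}^\delta\rangle_U=0$, which is exactly the needed cross-term cancellation and proves \eqref{201}. As a byproduct \eqref{201} immediately gives $\|B_\kappa'\mathbbm{v}^\delta\|_U\le\|\mathbbm{u}-\mathbbm{u}^\delta\|_U$, matching the claim in the introduction.

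For the quasi-optimality bound over $W^\delta$, the natural route is to recognize $\mathbbm{u}^\delta+B_\kappa'\mathbbm{v}^\delta$ as a Petrov–Galerkin solution with trial space $W^\delta$. From the relation $\langle \mathbbm{u}-\mathbbm{u}^\delta,B_\kappa'\undertilde{\mathbbm v}^\delta\rangle_U=\langle B_\kappa'\mathbbm{v}^\delta,B_\kappa'\undertilde{\mathbbm v}^\delta\rangle_U$ one gets $\langle \mathbbm{u}-(\mathbbm{u}^\delta+B_\kappa'\mathbbm{v}^\delta),B_\kappa'\undertilde{\mathbbm v}^\delta\rangle_U=0$ for all $\undertilde{\mathbbm v}^\delta\in V_\mp^\delta$; combined with the second equation of \eqref{300} (giving orthogonality to $U^\delta$) this shows $\mathbbm{u}-(\mathbbm{u}^\delta+B_\kappa'\mathbbm{v}^\delta)\perp_U (U^\delta + B_\kappa'V_\mp^\delta)\supseteq W^\delta$, i.e.\ $\mathbbm{u}^\delta+B_\kappa'\mathbbm{v}^\delta$ is the $U$-orthogonal projection of $\mathbbm{u}$ onto... no — care is needed here, since $\mathbbm{u}^\delta+B_\kappa'\mathbbm{v}^\delta\in W^\delta$ but the residual is orthogonal to the possibly larger space $U^\delta+B_\kappa'V_\mp^\delta$, so in fact it \emph{is} the best approximation to $\mathbbm{u}$ from $W^\delta$ whenever $W^\delta\subseteq U^\delta+B_\kappa'V_\mp^\delta$, which holds; thus the constant is even $1$, not $1/\gamma_\kappa^\delta$. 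I expect the main subtlety — and the reason the theorem states $1/\gamma_\kappa^\delta$ rather than $1$ — to be that $\mathbbm{u}^\delta+B_\kappa'\mathbbm{v}^\delta$ need not lie in $W^\delta$ unless one is careful, or that the intended statement measures against a different (smaller) space; I would therefore reconcile the orthogonality computation with the claimed constant, most likely by invoking the abstract Petrov–Galerkin error bound \cite[Remark 3.2]{249.99} exactly as in the proof of Theorem~\ref{thm1}, with the roles of trial/test spaces adjusted, and the factor $1/\gamma_\kappa^\delta$ entering through the inf-sup constant between $W^\delta$ and the effective test space. The main obstacle is getting this bookkeeping of trial/test pairs right so that the abstract result applies cleanly.
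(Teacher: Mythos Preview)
Your argument for the Pythagorean identity \eqref{201} is correct and matches the paper's: substituting $q(\undertilde{\mathbbm v}^\delta)=\langle\mathbbm u,B_\kappa'\undertilde{\mathbbm v}^\delta\rangle_U$ into the first line of \eqref{300} shows that $B_\kappa'\mathbbm v^\delta$ is the $U$-orthogonal projection of $\mathbbm u-\mathbbm u^\delta$ onto $B_\kappa'V_\mp^\delta$, and Pythagoras does the rest.

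For the second inequality, however, there is a genuine error. You write that the second equation of \eqref{300} ``gives orthogonality to $U^\delta$'' and hence that $\mathbbm u-(\mathbbm u^\delta+B_\kappa'\mathbbm v^\delta)\perp_U U^\delta$. This is not what the second equation says: it states $B_\kappa'\mathbbm v^\delta\perp_U U^\delta$, which has no bearing on whether $\mathbbm u-\mathbbm u^\delta$ is orthogonal to $U^\delta$. In the practical method $\mathbbm u^\delta$ is \emph{not} the $U$-orthogonal projection of $\mathbbm u$ onto $U^\delta$ (that would be the ideal method), so $\mathbbm u-\mathbbm u^\delta$ has in general a nonzero component in $U^\delta$, and so does $\mathbbm u-(\mathbbm u^\delta+B_\kappa'\mathbbm v^\delta)$. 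This is precisely why the constant cannot be improved to $1$; your instinct that something was off was right, but the diagnosis (``$\mathbbm u^\delta+B_\kappa'\mathbbm v^\delta$ need not lie in $W^\delta$'') is not the issue --- it does lie there, by the very second equation you invoked.

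The paper's route is shorter and avoids any Petrov--Galerkin bookkeeping. One observes that $P_2^\delta\colon\mathbbm u\mapsto\mathbbm u^\delta+B_\kappa'\mathbbm v^\delta$ is a (non-orthogonal) projector onto $W^\delta=U^\delta+(B_\kappa'V_\mp^\delta\cap(U^\delta)^\perp)$. The identity \eqref{201} you already proved gives $\|(\identity-P_2^\delta)\mathbbm u\|_U\le\|(\identity-P_1^\delta)\mathbbm u\|_U$ for every $\mathbbm u$, where $P_1^\delta\colon\mathbbm u\mapsto\mathbbm u^\delta$, so $\|\identity-P_2^\delta\|_{\cL(U,U)}\le\|\identity-P_1^\delta\|_{\cL(U,U)}=1/\gamma_\kappa^\delta$ by Remark~\ref{remmie}. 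Since $P_2^\delta$ is a projector onto $W^\delta$, one has $(\identity-P_2^\delta)\mathbbm u=(\identity-P_2^\delta)(\mathbbm u-\mathbbm w^\delta)$ for every $\mathbbm w^\delta\in W^\delta$, and taking norms and the infimum over $\mathbbm w^\delta$ gives the claim.
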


\begin{proof} Substituting $q(\undertilde{\mathbbm{v}}^\delta)=\langle \mathbbm{u}, B_\kappa'\undertilde{\mathbbm{v}}^\delta\rangle_U$ in \eqref{300}
shows that $B_\kappa'\mathbbm{v}^\delta$ is the orthogonal projection of $\mathbbm{u}-\mathbbm{u}^{\delta}$ onto $B_\kappa' V_{\mp}^\delta$, which completes the proof of \eqref{201}.

In Remark~\ref{remmie} we have seen that $P_1^\delta:=\mathbbm{u} \mapsto \mathbbm{u}^\delta$ is a projector onto $U^\delta$ with $\frac{1}{\gamma_\kappa^\delta}=\|P_1^\delta\|_{\cL(U,U)}=\|\identity - P_1^\delta\|_{\cL(U,U)}$. 
From \eqref{300} one infers that $P_2^\delta:=\mathbbm{u} \mapsto \mathbbm{u}^\delta+B_\kappa'\mathbbm{v}^\delta$ is a projector onto $U^\delta+ (B_\kappa' V^\delta_{\new{\mp}} \cap (U^\delta)^{\perp})$.
Equation \eqref{201} shows that $\|\identity - P_2^\delta\|_{\cL(U,U)}\leq \|\identity - P_1^\delta\|_{\cL(U,U)}=\frac{1}{\gamma_\kappa^\delta}$.
From
$$
\|(\identity - P_2^\delta)\mathbbm{u}\|_U = \inf_{\mathbbm{w}^\delta \in U^\delta+ (B_\kappa' V^\delta_{\new{\mp}} \cap (U^\delta)^{\perp})}\|(\identity - P_2^\delta)(\mathbbm{u}-\mathbbm{w}^\delta)\|_U,
$$
one concludes the proof.
\end{proof}

In numerical experiments we will see that from the point on where the number of degrees of freedom per wavelength exceeds $1$, 
the boosted approximation
$$
(\phi_{\rm bst}^\delta,\vec{u}_{\rm bst}^\delta):=\mathbbm{u}^\delta+B_\kappa'\mathbbm{v}^\delta
$$
is an increasingly more accurate approximation to $\mathbbm{u}$ than $\mathbbm{u}^\delta$. Moreover, having solved the saddle point system \eqref{201}, one gets this improved approximation for free. On the other hand, the representation of $\mathbbm{u}^\delta+B_\kappa'\mathbbm{v}^\delta$ requires more storage than that of $\mathbbm{u}^\delta$. Furthermore, because of the unusual `trial space' $U^\delta+ (B_\kappa' V^\delta_{\new{\mp}} \cap (U^\delta)^{\perp})$, it is not so clear how to compare this boosted approximation with a Galerkin approximation using a standard finite element space.

\begin{remark}[$L L^*$-method] Keeping $V^\delta_{\new{\mp}}$, but replacing $U^\delta$ by $\{0\}$, the boosted FOSLS approximation reduces to finding $\mathbbm{v}^\delta \in V^\delta_{\new{\mp}}$ that solves
$\langle B_\kappa'\mathbbm{v}^\delta, B_\kappa'\undertilde{\mathbbm{v}}^\delta\rangle_U= q(\undertilde{\mathbbm{v}}^\delta)$  for all $\undertilde{\mathbbm{v}}^\delta \in V_{\mp}^\delta$. This is an example of a so-called $L L^*$-method (see \cite{35.9356}). 
It only requires solving a Hermitian positive definite system.
It holds that $\|\mathbbm{u}-B_\kappa'\mathbbm{v}^\delta\|_U=
\|B_\kappa'^{-1} \mathbbm{u}-\mathbbm{v}^\delta\|_{V_{\mp,\kappa}}=
\min_{\tilde{\mathbbm{v}}^\delta \in V^\delta_{\new{\mp}}} \|B_\kappa'^{-1} \mathbbm{u}-\tilde{\mathbbm{v}}^\delta\|_{V_{\mp,\kappa}}$.
Even for fixed $\kappa$, to estimate this best approximation error the question of the regularity of $B_\kappa'^{-1} \mathbbm{u}$ enters. 
We refer to \cite{170.5} for a further discussion of this issue.

In our numerical experiments, the boosted FOSLS approximation was much more accurate than this $L L^*$-approximation.
\end{remark}

The boosted approximation $\mathbbm{u}^\delta+B_\kappa' \mathbbm{v}^\delta$ being much more accurate than $\mathbbm{u}^\delta$ is equivalent
to $B_\kappa' \mathbbm{v}^\delta$ being close to the error $\mathbbm{u}-\mathbbm{u}^\delta$. In particular, if, for some constant $\lambda<1$, $\|\mathbbm{u} -(\mathbbm{u}^\delta+B_\kappa' \mathbbm{v}^\delta)\|_U \leq \lambda 
\|\mathbbm{u} -\mathbbm{u}^\delta\|_U$, then \eqref{201} shows that 
$$
\|B_\kappa' \mathbbm{v}^\delta\|_U \leq \|\mathbbm{u} -\mathbbm{u}^\delta\|_U \leq \tfrac{1}{\sqrt{1-\lambda^2}}\|B_\kappa' \mathbbm{v}^\delta\|_U.
$$
A value of $\lambda<1$ can only be expected when the number of degrees of freedom in $V_\mp^\delta$ per wavelength start to exceed $1$.
This is confirmed in numerical experiments, in which it is also observed that $\lambda \downarrow 0$ for a mesh-size tending to zero.
 
 Furthermore, we will employ local $U$-norms of $B_\kappa'\mathbbm{v}^\delta$ as local error indicators to drive an adaptive refinement routine.

\begin{remark}
In Theorems~\ref{thm:coarse} and \ref{thm:fine}, we showed inf-sup stability $\gamma_\kappa^\delta$ without constructing a Fortin interpolator. If one has a Fortin interpolator $\Pi^\delta$ available, then following \cite{35.93556} one shows that
$$
\|\mathbbm{u} -\mathbbm{u}^\delta\|_U \leq \|B_\kappa' \Pi^\delta {B_\kappa'}^{-1}\|_{\cL(U,U)} \|B_\kappa' \mathbbm{v}^\delta\|_U+\osc^\delta(q),
$$
with the data-oscillation term $\osc^\delta(q):=\|B_\kappa^{-1}(\identity-\Pi^\delta)'q\|_U$.
\end{remark}

\

\section{Numerical results} \label{sec:numerics}
\subsection{Pollution factors}
For our first experiment, we take $\kappa=100$, $\Omega=(0,1)^2$, $\partial\Omega=\Gamma_R$, $\tria^\delta$ the uniform criss-cross triangulation into isosceles right triangles with longest edge of size $h_\delta$, trial space
$U^\delta=\cS_p^{0}(\tria^\delta) \times \cS_p^{0}(\tria^\delta)^{2}$, test space
$V_\mp^\delta=(\cS_{\new{\tilde{p}}}^{0}(\tria^\delta) \times \RT_{\new{\tilde{p}}}(\tria^\delta)) \cap V_\mp$ for $\tilde{p}=p+2$, and for the Galerkin method,
$X^\delta=\cS_p^{0}(\tria^\delta)$, and, merely for the computation of the approximate pollution factor $1/\hat{\gamma}_{\kappa,{\rm Gal}}^\delta$ of the Galerkin method, $Y^\delta=\cS_{p+3}^{0}(\tria^\delta)$.
In Figure~\ref{fig:pollution}, for $p \in \{1,2,3,4\}$ we compare the pollution factors $1/\gamma_\kappa^\delta$ (FOSLS) and $1/\hat{\gamma}_{\kappa,{\rm Gal}}^\delta$ (Galerkin) as function of the mesh-size. The factors $1/\hat{\gamma}_{\kappa,{\rm Gal}}^\delta$ hardly increased when we increased the polynomial  degree of $Y^\delta$,  and so they are expected to be very close to the true pollution factors $1/\gamma_{\kappa,{\rm Gal}}^\delta$.
\begin{figure}[h!]
\begin{subfigure}{.5\textwidth}
\centering
\includegraphics[width=\linewidth]{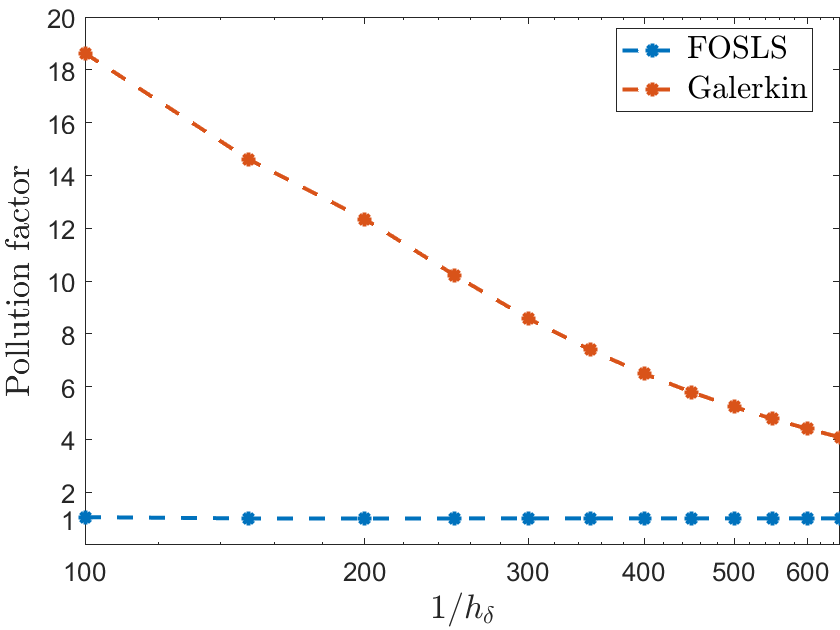}
\caption{$p=1$}
\end{subfigure}\hspace*{0cm}%
\begin{subfigure}{.5\textwidth}
\centering
\includegraphics[width=\linewidth]{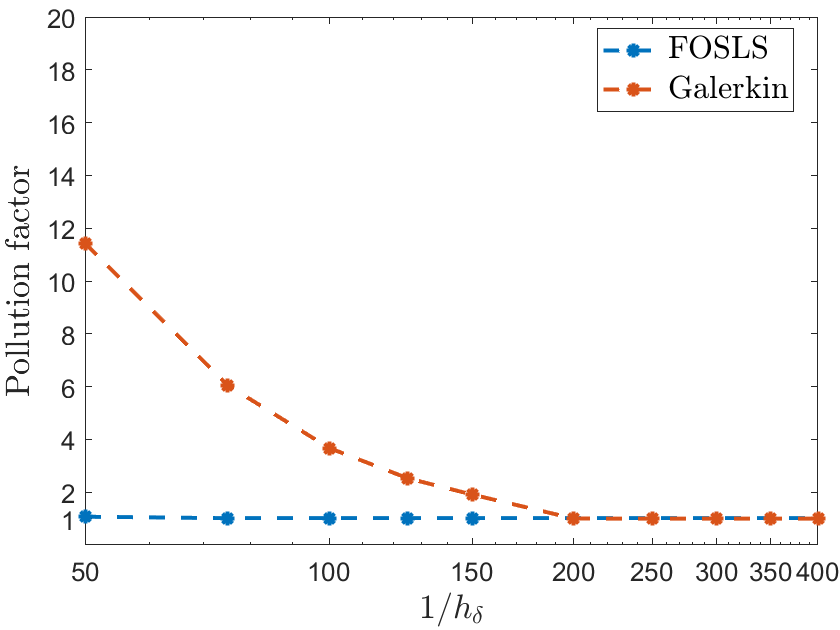}
\caption{$p=2$}
\end{subfigure}
\begin{subfigure}{.5\textwidth}
\centering
\includegraphics[width=\linewidth]{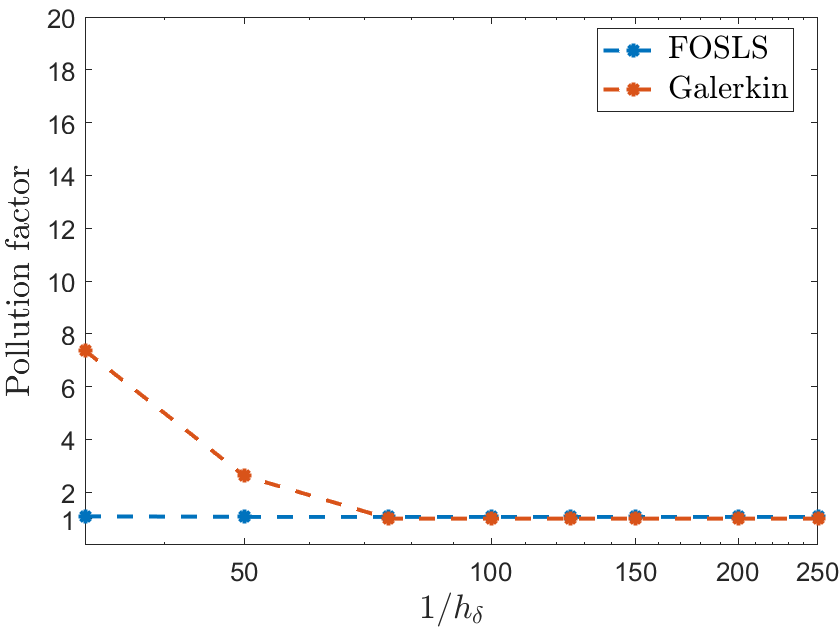}
\caption{$p=3$}
\end{subfigure}\hspace*{0cm}%
\begin{subfigure}{.5\textwidth}
\centering
\includegraphics[width=\linewidth]{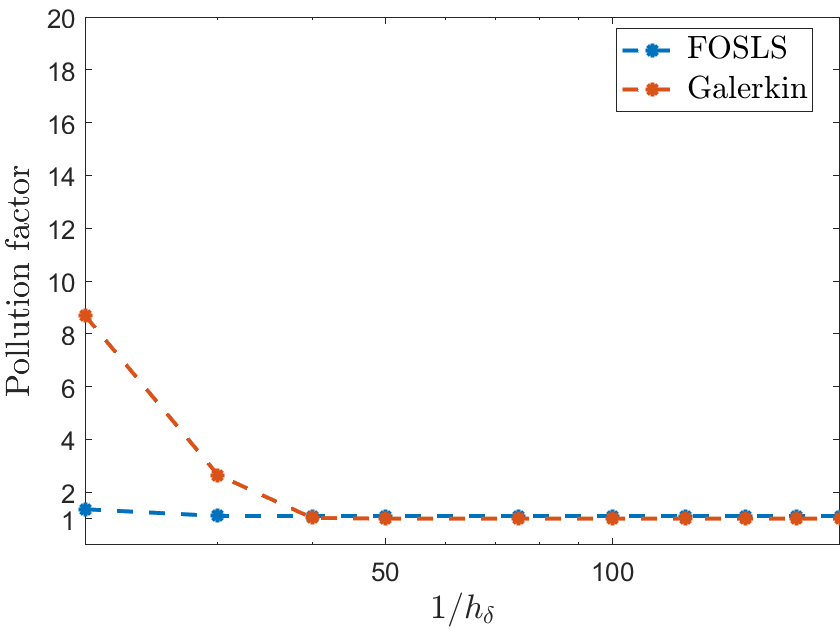}
\caption{$p=4$}
\end{subfigure}
\caption{Comparison of pollution factors $1/\gamma_\kappa^\delta$ (FOSLS) and $1/\hat{\gamma}_{\kappa,{\rm Gal}}^\delta$ (Galerkin) for $p \in \{1,2,3,4\}$ and $\kappa=100$.}
\label{fig:pollution}
\end{figure}
In all cases, the pollution factor for the FOSL method is very close to $1$, meaning that the FOSLS solution is very close to the best approximation from $U^\delta$ w.r.t.~the $U$-norm. Concerning the Galerkin method, the results show that in order to get a pollution factor below, say, $4$, the number of points required per wavelength $\frac{2\pi p}{\kappa h_\delta}$
is $41.6$, $12.1$, $8.4$, $7.0$ for $p=1,2,3,4$, respectively.
These numbers are known to further increase when $\kappa \rightarrow \infty$, unless $p$ is simultaneously increased such that $\frac{\log \kappa}{p}$ is kept sufficiently small.
The pollution factor for the FOSLS method can be uniformly controlled when \new{$\max(\log \kappa,p^2)/\tilde{p}$ is sufficiently small.}
\subsection{Comparison of errors for a plane wave solution}
As in the previous subsection, in this experiment we take $\kappa=100$, $\Omega=(0,1)^2$, $\partial\Omega=\Gamma_R$, $\tria^\delta$ the uniform triangulation with mesh-size $h_\delta$, and, here for $p=1$,
$U^\delta=\cS_p^{0}(\tria^\delta) \times \cS_p^{0}(\tria^\delta)^{2}$,
$V_\mp^\delta=(\cS_{p+2}^{0}(\tria^\delta) \times \RT_{p+2}(\tria^\delta)) \cap V_\mp$, and $X^\delta=\cS_p^{0}(\tria^\delta)$.

For $\vec{r}=(\cos(\pi/3),\sin(\pi/3))$, we prescribe the plane wave solution 
$\phi(\vec{x})=\phi_{\kappa \vec{r}}(\vec{x}):=e^{-i \kappa \vec{r}\cdot\vec{x}}$, and so $\vec{u}:=\frac{1}{\kappa} \nabla \phi=-i \phi \vec{r}$, and choose the data for the FOSLS and Galerkin methods correspondingly.

With $(\phi^\delta,\vec{u}^\delta)$ and $\phi_{\rm Gal}^\delta$ denoting the FOSLS and Galerkin solution, and with $(\phi_{\rm bst}^\delta,\vec{u}_{\rm bst}^\delta)$ denoting the boosted FOSLS solution, for a range of $h_\delta$ we compare the errors 
$\|\phi-\phi^\delta\|_{L_2(\Omega)}$ and $\|\phi-\phi_{\rm Gal}^\delta\|_{L_2(\Omega)}$ with 
$\inf_{\tilde{\phi}^\delta \in \cS_1^{0}(\tria^\delta)} \|\phi-\tilde{\phi}^\delta\|_{L_2(\Omega)}$, and additionally give
$\|\phi-\phi_{\rm bst}^\delta\|_{L_2(\Omega)}$.

Similarly, we compare
$\|(\phi,\vec{u})-(\phi^\delta,\vec{u}^\delta)\|_{U}$ with
$\inf_{(\tilde{\phi}^\delta,\tilde{\vec{u}}^\delta) \in U^\delta} \|(\phi,\vec{u})-(\tilde{\phi}^\delta,\tilde{\vec{u}}^\delta)\|_{U}$,
and
$\|\phi-\phi_{\rm Gal}^\delta\|_{1,\kappa}$ with
$\inf_{\tilde{\phi}^\delta \in X^\delta}\|\phi-\tilde{\phi}^\delta\|_{1,\kappa}$.
Because of $\|\cdot\|_{1,\kappa}=\|(\cdot,\frac{1}{\kappa}\divvv)\|_U$, these four errors can also be mutually compared.
We give $\|(\phi,\vec{u})-(\phi_{\rm bst}^\delta,\vec{u}_{\rm bst}^\delta)\|_{U}$, and the effectivity index
$\|(\phi_{\rm bst}^\delta,\vec{u}_{\rm bst}^\delta)-(\phi^\delta,\vec{u}^\delta)\|_{U}/ \|(\phi,\vec{u})-(\phi^\delta,\vec{u}^\delta)\|_{U}$
of the computable estimator $(\phi_{\rm bst}^\delta,\vec{u}_{\rm bst}^\delta)-(\phi^\delta,\vec{u}^\delta)$ of the error in the FOSLS solution.

The results given in Figure~\ref{fig:errorsunitsquare}
\begin{figure}[h!]
\begin{subfigure}{.5\textwidth}
\centering
\includegraphics[width=\linewidth]{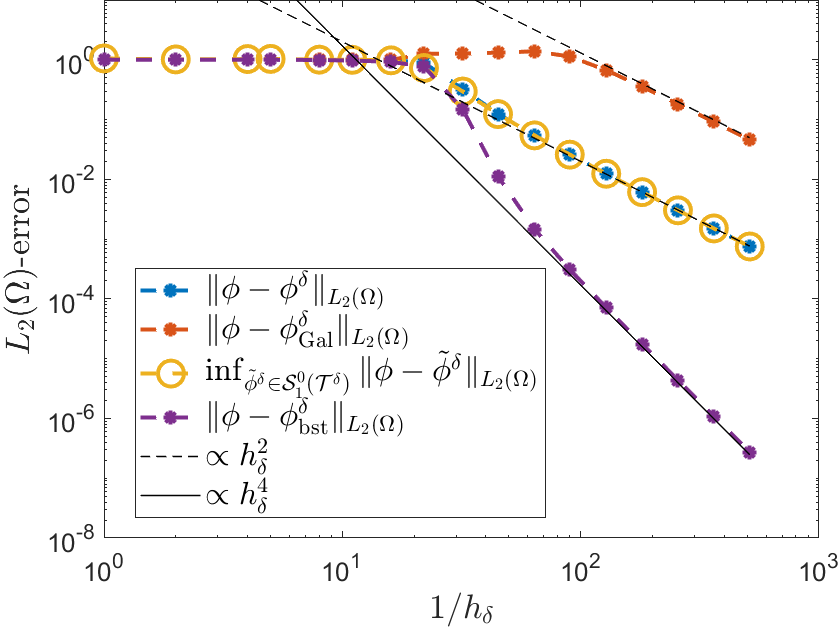}
\end{subfigure}\hspace*{0cm}%
\begin{subfigure}{.5\textwidth}
\centering
\includegraphics[width=\linewidth]{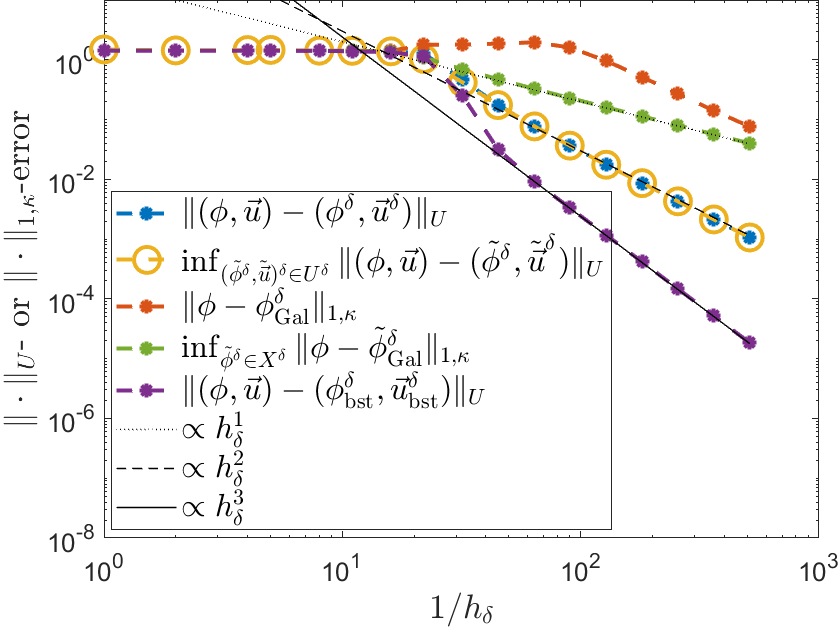}
\end{subfigure}
\begin{subfigure}{.5\textwidth}
\centering
\includegraphics[width=\linewidth]{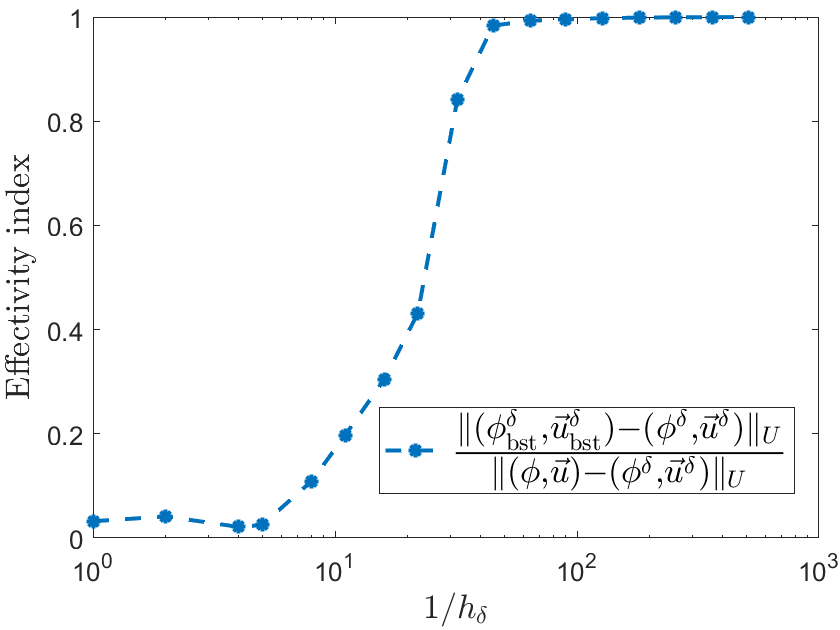}
\end{subfigure}
\caption{Errors in piecewise linear FOSLS and Galerkin approximations of a plane wave solution.}
\label{fig:errorsunitsquare}
\end{figure}
show that in both $\|\cdot\|_{L_2(\Omega)}$- and $\|\cdot\|_U$-norm the FOSLS solution is as good as the best approximation, whereas the Galerkin solution suffers from pollution.
The asymptotic rate in $\|\cdot\|_U$-norm (or $\|\cdot\|_{1,\kappa}$-norm) is better for the FOSLS method. 
This can be explained by the fact that with the FOSLS method also the gradient of the solution is approximated by piecewise linears.
The boosted approximation converges with a better asymptotic rate than the FOSLS method, and, consequently, the a posteriori error estimate is asymptotically exact.
When \new{the} number of degrees of freedom in $V_\mp^\delta$ per wavelength is, however, less than $2$, the boosted solution cannot provide a meaningful approximation, and so the error estimator is of no use.

Finally, to get an impression of the difference in accuracy of the FOSLS and Galerkin solutions, in Figure~\ref{fig:pics}
\begin{figure}[h!]
\hspace*{0cm}
\begin{subfigure}{.5\textwidth}
\centering
\includegraphics[width=\linewidth]{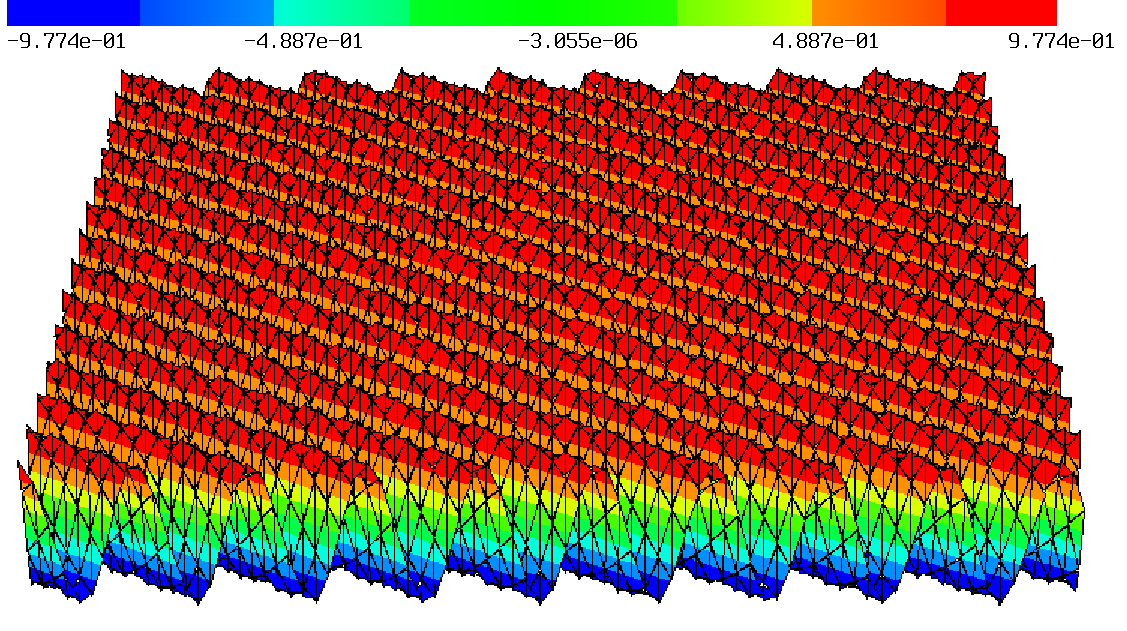}
\end{subfigure}\hspace*{0cm}%
\begin{subfigure}{.5\textwidth}
\centering
\includegraphics[width=\linewidth]{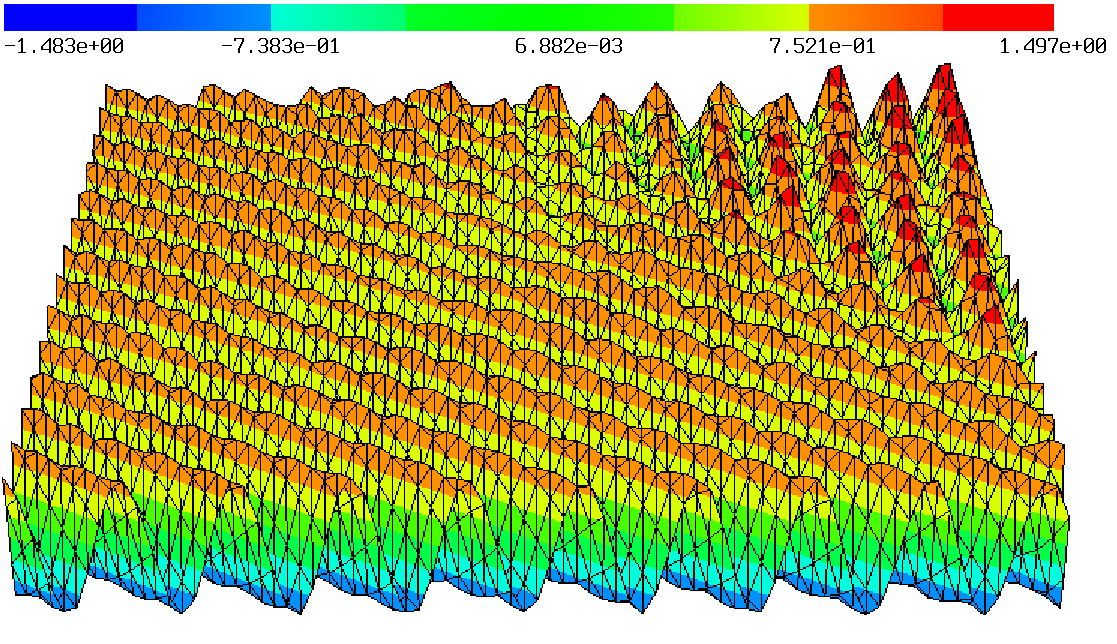}
\end{subfigure}
\caption{Piecewise linear FOSLS and Galerkin solution for $h_\delta = 1/80$. 
Left: $\Re \phi^\delta$, right: $\Re \phi_{\rm Gal}^\delta$.}
\label{fig:pics}
\end{figure}
we show $\Re \phi^\delta$ and $\Re \phi_{\rm Gal}^\delta$ for $h_\delta=\frac{1}{80}$.

\subsection{Scattering problem on a non-trapping domain}
We now consider an example from \cite{38.25}. Namely, let $\Omega:= (0,1)^2\setminus \overline{D}$, $\Gamma_D:=\partial D$, and $\Gamma_R:=\partial (0,1)^2$, where
$$
D:=\{\vec{x}\in (0,1)^2 \colon 2|x_1|-\tfrac12<x_2<|x_1|\}.
$$
$f=g_D=0$ and 
$g=\frac{1}{\kappa}(\frac{1}{\kappa} \nabla \phi_{\kappa \vec{r}}\cdot\vec{n}\pm i \phi_{\kappa \vec{r}})|_{\Gamma_R}=\frac{-i (\vec{r}\cdot\vec{n}\mp 1)}{\kappa}\phi_{\kappa \vec{r}}|_{\Gamma_R}$, where 
$\phi_{\kappa \vec{r}}$ is as defined in the previous subsection again for $\vec{r}=(\cos(\pi/3),\sin(\pi/3))$.
This problem models the (soft) scattering of an incoming wave $\phi_{\kappa \vec{r}}$ by the obstacle $D$. We take $\kappa=30 \pi$.

We consider a sequence of triangulations $(\mathcal{T}^\delta)_{\delta\in\Delta}$ of $\overline{\Omega}$, where each triangulation is created from its predecessor by a newest vertex bisection.
The initial triangulation is created using \verb+NGSolve+ and has 14 triangles and is shown in the left picture in Figure~\ref{fig:domainplusmesh}.
We take $U^\delta=\cS_p^{0}(\tria^\delta) \times \cS_p^{0}(\tria^\delta)^{2}$,
$V_\mp^\delta=(\cS_{p+2}^{0}(\tria^\delta) \times \RT_{p+2}(\tria^\delta)) \cap V_\mp$, and $X^\delta=\cS_p^{0}(\tria^\delta)$ here for $p=3$.
We consider both uniform refinement and, for the FOSLS method, adaptive refinements. The latter are driven by the a posteriori error estimator presented in Sect.~\ref{sec:boosted}, using D\"{o}rfler marking with parameter $\theta=0.6$.
In Figure~\ref{fig:domainplusmesh}, we show the initial and an adaptively refined mesh.
\begin{figure}[h!]
\begin{subfigure}{.5\textwidth}
\centering
\includegraphics[width=\linewidth]{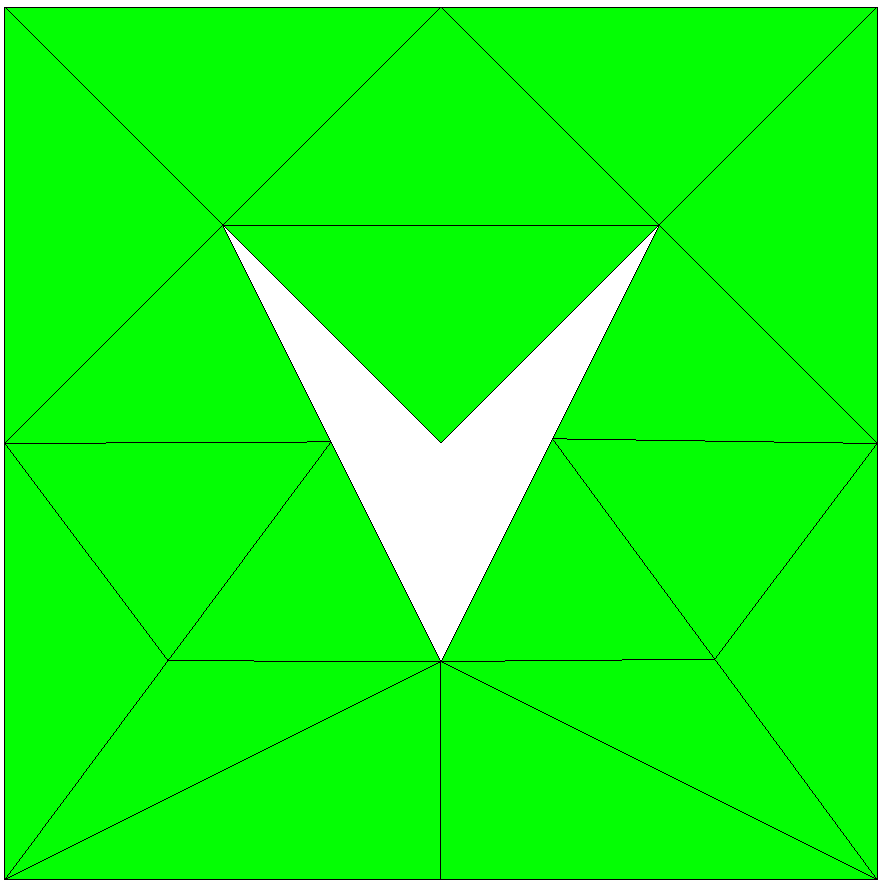}
\end{subfigure}\hspace*{0cm}%
\begin{subfigure}{.5\textwidth}
\centering
\includegraphics[width=\linewidth]{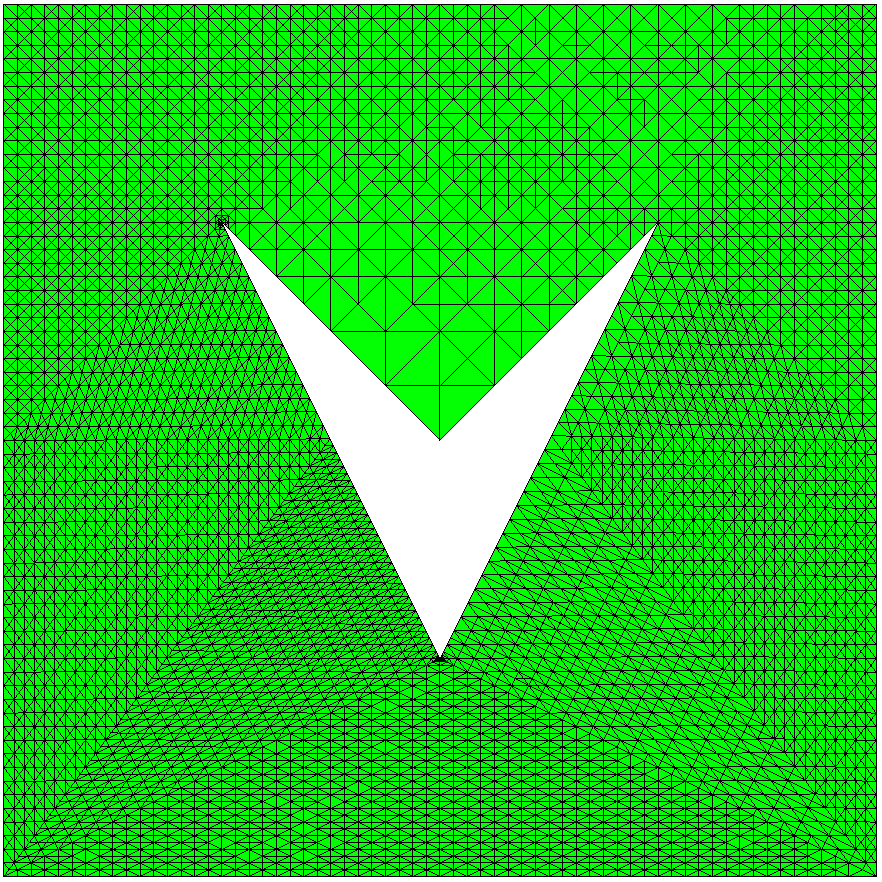}
\end{subfigure}
\caption{Non-trapping domain: left: Initial mesh, right: adaptively refined mesh with $12595$ triangles.}
\label{fig:domainplusmesh}
\end{figure}
In Figure~\ref{fig:error-pollution-effectivity}, we show the $\|\cdot\|_U$- or $\|\cdot\|_{1,\kappa}$-error in the (uniform and adaptive) FOSLS- or Galerkin-solutions, the pollution factors, 
and for the FOSLS case, the effectivity index of the a posteriori error estimator. To compute these quantities, we have replaced the unknown exact solution $(\phi,\vec{u})$ by the boosted FOSLS solution with trial- and test-spaces $\cS_4^{0}(\tilde{\tria}^{\new{\delta}})\times \cS_4^{0}(\tilde{\tria}^{\new{\delta}})^2$ and
$(\cS_{6}^{0}(\tilde{\tria}^{\new{\delta}}) \times \RT_{6}(\tilde{\tria}^{\new{\delta}})) \cap V_\mp$ w.r.t.~a sufficiently fine adaptively refined partition $\tilde{\tria}^{\new{\delta}}$.
\begin{figure}[h!]
\begin{subfigure}{.5\textwidth}
\centering
\includegraphics[width=\linewidth]{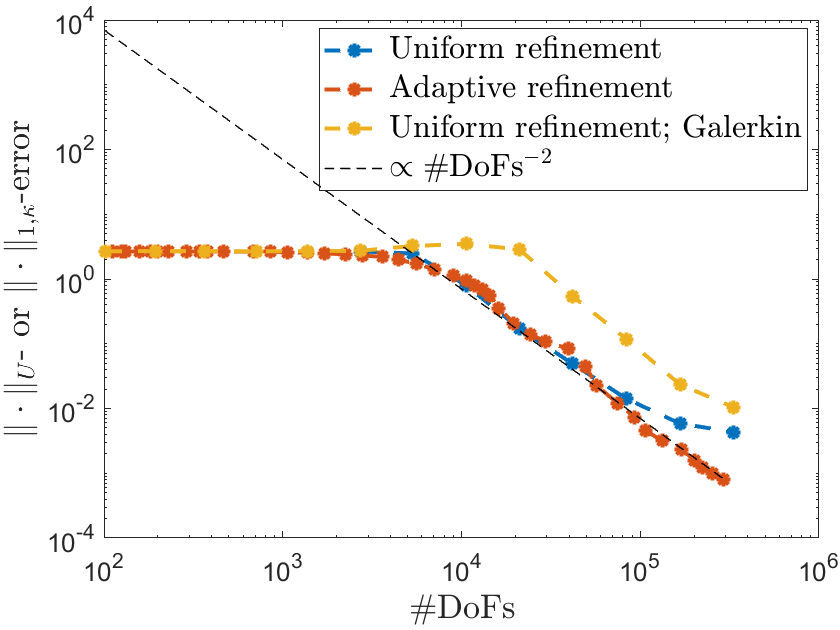}
\end{subfigure}\hspace*{0cm}%
\begin{subfigure}{.5\textwidth}
\centering
\includegraphics[width=\linewidth]{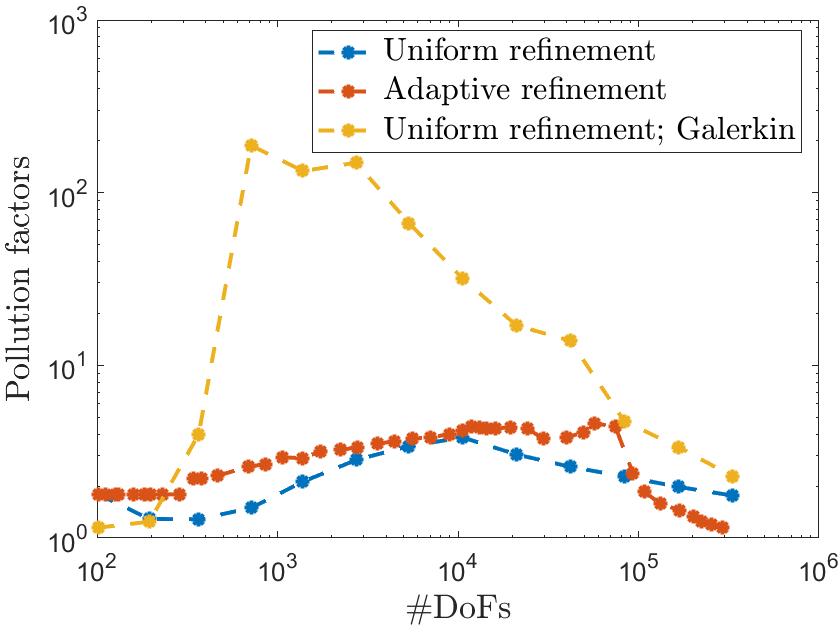}
\end{subfigure}
\begin{subfigure}{.5\textwidth}
\centering
\includegraphics[width=\linewidth]{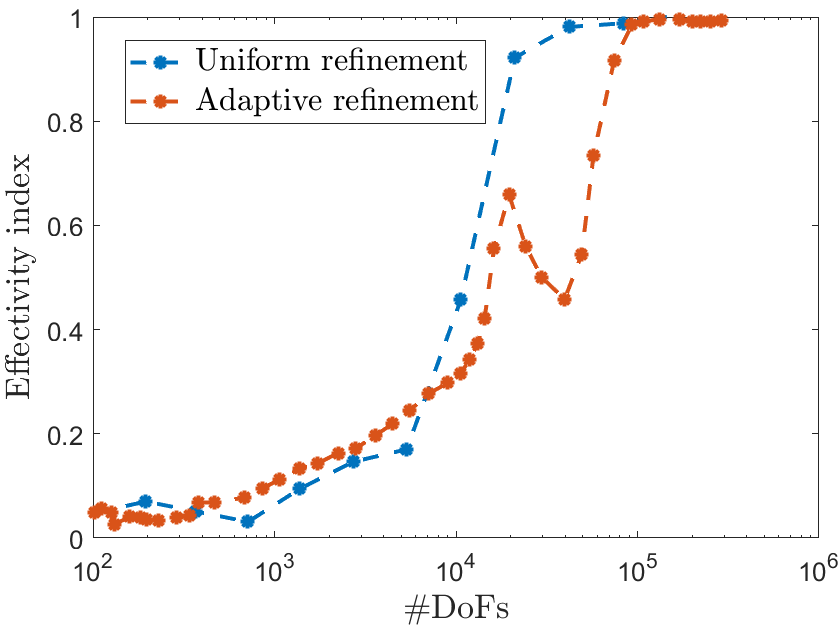}
\end{subfigure}
\caption{Errors, pollution factors, and effectivity indices for scattering example on non-trapping domain with approximation by piecewise cubics.}
\label{fig:error-pollution-effectivity}
\end{figure}

The numerical results show that the best possible rate is achieved with the adaptive FOSLS method. For this problem, the advantage of adaptivity is only visible for $\#$DoFs$>10^5$. It can be expected that for smaller $\kappa$, and so a smaller global wavelength, adaptivity is already advantageous for smaller problem sizes.
The FOSLS method has a smaller pollution factor than the Galerkin method, but the difference is smaller than with the piecewise linear trial spaces employed in the previous two subsections. 
When keeping the piecewise  cubic trial spaces for FOSLS and Galerkin methods, this difference will increase for increasing $\kappa$ when the order of the test space for the FOSLS method is (slowly) increased as well.

\subsection{Scattering problem on a trapping domain}
Again we consider an example from \cite{38.25}, where
$\Omega = (0,1)^2\setminus \overline{D}$, $\Gamma_D=\partial D$, and $\Gamma_R=\partial (0,1)^2$ and $D=D_1\cup D2$ where
$$
D_1=\{\vec{x}\in (0,1)^2 \colon \tfrac{1}{4}\leq|y|\leq\tfrac{1}{4}-\tfrac{1}{2}x\}.
$$
$$
D_2=\{\vec{x}\in (0,1)^2 \colon \tfrac{1}{2}+2x\leq |y|\leq \tfrac{1}{2}; x\geq -\tfrac{1}{2}\}.
$$
We set $f=g_D=0$ and 
$g=\frac{1}{\kappa}(\frac{1}{\kappa} \nabla \phi_{\kappa \vec{r}}\cdot\vec{n}\pm i \phi_{\kappa \vec{r}})|_{\Gamma_R}=\frac{-i (\vec{r}\cdot\vec{n}\mp 1)}{\kappa}\phi_{\kappa \vec{r}}|_{\Gamma_R}$, where 
$\phi_{\kappa \vec{r}}$ is as in the previous subsection but now with $\vec{r}=(\cos(9\pi/10),\sin(9\pi/10))$, \new{and $\kappa=30\pi$.} 

The domain, and the initial and an adaptively refined triangulation are illustrated in Figure~\ref{fig:trapdomainplusmesh}.
\begin{figure}[h!]
\begin{subfigure}{.5\textwidth}
\centering
\includegraphics[width=\linewidth]{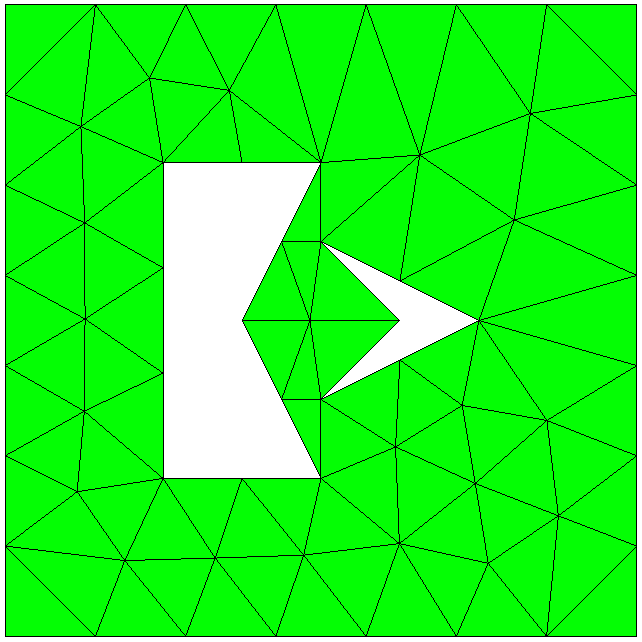}
\end{subfigure}\hspace*{0cm}%
\begin{subfigure}{.5\textwidth}
\centering
\includegraphics[width=\linewidth]{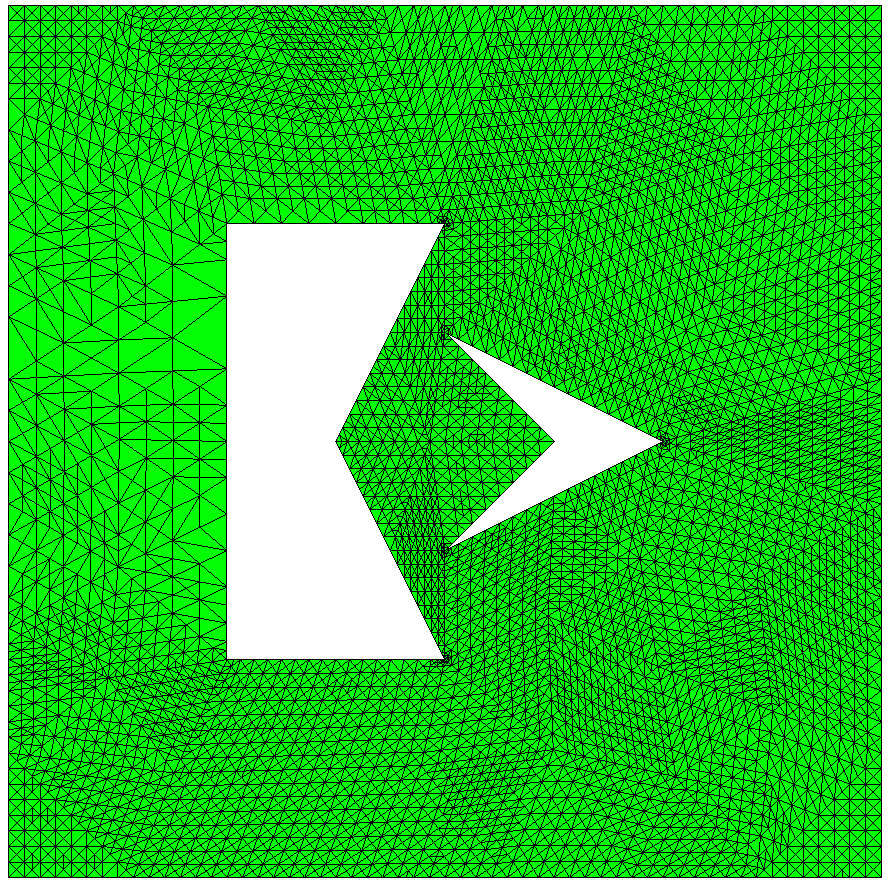}
\end{subfigure}
\caption{Trapping domain; left: Initial mesh, right: adaptively refined mesh with $12329$ triangles.}
\label{fig:trapdomainplusmesh}
\end{figure}

In Figure~\ref{fig:traperror-pollution-effectivity}, we show the $\|\cdot\|_U$- or $\|\cdot\|_{1,\kappa}$-error in the (uniform and adaptive) FOSLS- or Galerkin-solution, the pollution factors, 
and for the FOSLS case, the effectivity index of the a posteriori error estimator. To compute these quantities, we have replaced the unknown exact solution $(\phi,\vec{u})$ by 
a sufficiently accurate boosted higher order FOSLS approximation.
\begin{figure}[h!]
\begin{subfigure}{.5\textwidth}
\centering
\includegraphics[width=\linewidth]{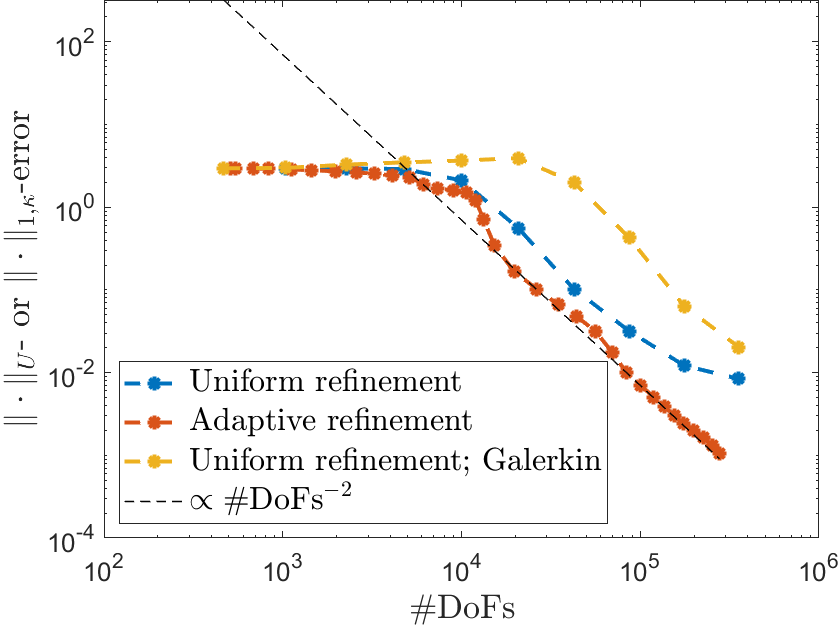}
\end{subfigure}\hspace*{0cm}%
\begin{subfigure}{.5\textwidth}
\centering
\includegraphics[width=\linewidth]{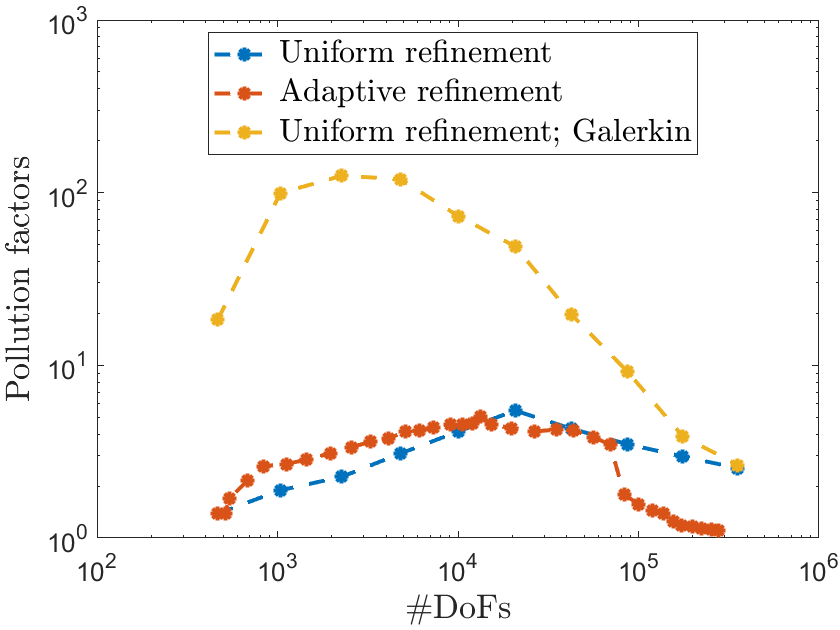}
\end{subfigure}
\begin{subfigure}{.5\textwidth}
\centering
\includegraphics[width=\linewidth]{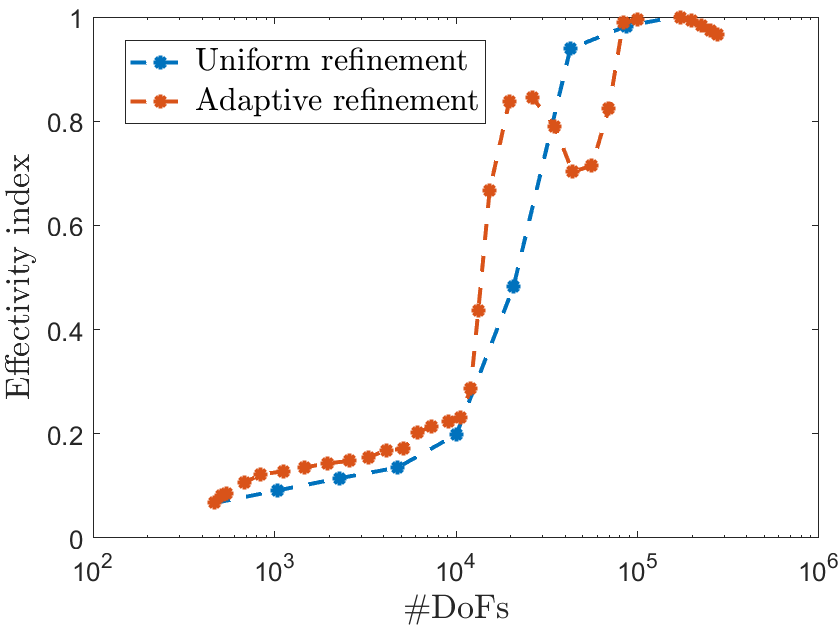}
\end{subfigure}
\caption{Errors, pollution factors, and effectivity indices for scattering example on trapping domain with approximation by piecewise cubics.}
\label{fig:traperror-pollution-effectivity}
\end{figure}
The results are not much different as those from the preceding subsection for a non-trapping domain.

\section{Conclusion \new{and outlook}} \label{sec:conclusions} 
We considered an ultra-weak first order system (FOSLS) formulation of the Helmholtz equation. By employing the optimal test-norm, the (unfeasible) exact least-squares minimization yields the best approximation from the trial space w.r.t.~the norm on $L_2(\Omega) \times L_2(\Omega)^d$.
The performance of the practical, implementable method, which uses a discretized dual norm, is governed by the computable inf-sup constant $\gamma_\kappa^\delta$ determined by $\kappa$ and the pair of trial- and test-spaces.
Its reciprocal is the pollution factor of the method.
To show the uniform boundedness of $\gamma_\kappa^\delta$ from below we used its characterization in terms the approximability from the test space of the solution of the adjoint problem with a forcing term from the trial space. 
Using results from \cite{202.8}, we showed that on convex polygons the FOSLS method is pollution-free when
\new{$\max(\log \kappa,p^2)/\tilde{p}$ is sufficiently small, where $p$ and $\tilde{p}$ are the orders of the finite element spaces at trial- and test side.}

The (approximate) Riesz' lift of the residual can be used for a posteriori error estimation, as a local error indicator for adaptive refinement, and to construct an improved approximation. In experiments, this `boosted' approximation converges at an improved rate, so that the estimator is asymptotically exact.

To compare our results with those by the standard Galerkin discretization, we constructed a computable quantity that is a lower
bound, and up to a constant factor, an upper bound for the pollution factor of this method.
Our numerical experiments on convex or non-convex, including trapping domains show that our FOSLS method has smaller pollution factors, and yields more accurate approximations than the Galerkin method.

So far, we have solved the linear system that arises from the FOSLS method with a direct solver. We are, however, optimistic that its Hermitian saddle-point form provides advantages when it concerns an \new{iterative solution. Indeed, since for trial- and test-spaces that yield pollution-free approximations the
 Schur complement corresponds to a uniformly boundedly invertible operator on $L_2(\Omega)^{d+1}$, a fast iterative solution only requires a good preconditioner for the positive definite upper left block. This block is the representation of a least squares discretisation of Helmholtz in first order form. Good preconditioners for this block might be easier to construct than such preconditioners for the indefinite matrix resulting from the common Galerkin discretisation. This will be \new{a} topic of future study.}

\end{document}